\numberwithin{equation}{section}
\newtheorem{thm}{Theorem}[section]
\newtheorem{cor}[thm]{Corollary}
\newtheorem{lem}[thm]{Lemma}
\newtheorem{prop}[thm]{Proposition}
\newtheorem{dfn}[thm]{Definition}
\theoremstyle{remark}
\newtheorem{rem}{Remark}[section]
\newcommand{\stkout}[1]{\ifmmode\text{\sout{\ensuremath{#1}}}\else\sout{#1}\fi}
\begin{document}
	
	\thanks{The authors were partially supported by DFF Sapere Aude 7027-00110B, by CPH-GEOTOP-DNRF151 and by CF21-0680 from respectively the Independent Research Fund Denmark, the Danish National Research Foundation and the Carlsberg Foundation.}
	
	
	
	
	\title[Embedded self-shrinkers with rotational symmetry]{Entropy bounds, Compactness and Finiteness Theorems for Embedded self-shrinkers with rotational symmetry}
	
	\author[J.M.S. Ma,  A. Muhammad, N.M. M\o ller]{John Man Shun Ma,  Ali Muhammad, Niels Martin M\o ller}

	\begin{abstract} In this work, we study the space of complete embedded rotationally symmetric self-shrinking hypersurfaces in $\mathbb{R}^{n+1}$. First, using comparison geometry in the context of metric geometry, we derive explicit upper bounds for the entropy of all such self-shrinkers. Second, as an application we prove a smooth compactness theorem on the space of all such shrinkers. We also prove that there are only finitely many such self-shrinkers with an extra reflection symmetry. 
	\end{abstract}
	
	\date{}
	
	\maketitle
	
	\section{Introduction}
	An $n$-dimensional smooth hypersurface $\Sigma^n\subseteq\mathbb R^{n+1}$ is a {\sl self-shrinker} if 
	\begin{align*}
		H_\Sigma(x) = \frac 12 \langle x, \vec n\rangle, \ \ \text{ for all }x\in \Sigma.
	\end{align*}
	Here $H_\Sigma$ is the mean curvature of $\Sigma$ with respect to the outward unit normal $\vec n$. Given a self-shrinker, one obtains by scaling a one parameter family of hypersurfaces $\Sigma_t = \sqrt{-t} \Sigma$, $t\in (-\infty, 0)$ which solves the mean curvature flow (MCF) equation,
	\begin{align} \label{MCF eqn}
		\left( \frac{\partial  \Sigma_t}{\partial t}\right)^\perp = \vec H_{\Sigma_t},
	\end{align}
	where $\vec H_\Sigma = -H_\Sigma\vec n$ denotes the mean curvature vector of $\Sigma$.
	
	Most importantly, self-shrinkers serve as singularity models for the MCF: under the Type I condition on the singularity, Huisken \cite{Huisken} showed that a rescaling of a MCF around a singularity converges locally smoothly subsequentially to a self-shrinker, and proved that closed shrinkers with positive mean curvature are round spheres. Later Ilmanen \cite{Ilmanen} proved the subsequential weak convergence of the tangent flow of any MCF to a self-shrinking solution. 
	
	For $n=1$, all compact immersed self-shrinkers in $\mathbb R^2$ were found in \cite{AL}, the circle being the only embedded example. For $n=2$, Brendle \cite{Brendle} proved the long-standing conjecture that the round sphere of radius $2$ is the only closed embedded genus zero self-shrinker in $\mathbb R^3$. For higher genus, embedded examples are constructed in \cite{Ang}, \cite{DN}, \cite{KKM}, \cite{Nguyen}, \cite{Ketover}, \cite{moller}, \cite{BNS}. In general, the space of embedded self-shrinkers is not well-understood, even in the case of e.g. topological 2-tori in $\mathbb{R}^3$.
	
	In this paper we direct our attention to the class of complete embedded self-shrinking hypersurfaces in $\mathbb R^{n+1}$ with a rotational symmetry, for $n\geq 2$. Using a shooting method in ODEs, Angenent constructed in \cite{Ang} the first nontrivial self-shrinkers in $\mathbb R^{n+1}$ besides the round sphere, the generalized cylinders and the plane. These self-shrinkers constructed in \cite{Ang} are rotationally symmetric, embedded, diffeomorphic to $\mathbb S^{n-1}\times \mathbb S^1$ and are commonly called {\sl Angenent doughnuts}. They were used in a parabolic maximum principle argument in \cite{Ang} to prove that, when $n\geq 2$, mean curvature flows may develop thin neck-pinch singularities.
	
	In \cite{KM}, Kleene and the third named author proved a partial classification of all complete embedded rotationally symmetric self-shrinkers in any dimension (see also \cite{Song}). Mramor proved in \cite{Alex} several compactness and finiteness results on the space of all such shrinkers. It is conjectured that, at least in dimension 2, the Angenent doughnut (which when $n=2$ is topologically a torus) is unique and gives the only embedded self-shrinking torus in $\mathbb R^3$. This conjecture is still open, even in the rotationally symmetric case \cite{DLN}. 
	
	The goal of this paper is to study various properties of the space of all complete embedded rotationally symmetric self-shrinkers in $\mathbb R^{n+1}$. 
	
	For any hypersurface $\Sigma$ in $\mathbb R^{n+1}$, let $\lambda (\Sigma)$ be the entropy of $\Sigma$ defined in \cite{CM1} (see also Section \ref{section basic definition} for the definition). 
	
	\begin{thm} \label{upper bound on length}
		For each $n\ge 2$, there is a positive number $E_n$ such that 
		\begin{align*}
			1 \leq \lambda (\Sigma) \le E_n.
		\end{align*}
		for any complete embedded rotationally symmetric self-shrinker $\Sigma^n\subseteq\mathbb R^{n+1}$. 
	\end{thm}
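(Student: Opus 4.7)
\medskip
\noindent\textbf{Proof proposal.}
The lower bound is standard: taking $x_0=p\in\Sigma$ and letting the scale $t_0\to 0^+$ in the definition of $\lambda$ recovers the Gaussian density of the tangent hyperplane at $p$, which equals $1$, so $\lambda(\Sigma)\geq 1$. For the upper bound, Huisken's monotonicity gives $\lambda(\Sigma)=F(\Sigma):=(4\pi)^{-n/2}\int_\Sigma e^{-|x|^2/4}\,d\mathcal{H}^n$ for any self-shrinker. Writing $\Sigma$ as the $SO(n)$-orbit of an embedded profile curve $\gamma(s)=(x(s),r(s))$ in the open half-plane $\mathbb{H}=\{r>0\}$ reduces $F$ to a weighted arclength
\begin{align*}
F(\Sigma) \;=\; c_n\int_\gamma \mu\, ds, \qquad \mu(x,r) \;:=\; r^{n-1}\, e^{-(x^2+r^2)/4},
\end{align*}
for an explicit constant $c_n>0$. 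It therefore suffices to bound $\int_\gamma \mu\, ds$ uniformly in $\gamma$.

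\medskip
This weighted length is exactly the arclength in the conformally-Euclidean \emph{Angenent metric} $g_A := \mu^2(dx^2+dr^2)$ on $\mathbb{H}$, for which the shrinker profile curves are precisely the geodesics. A direct computation using the conformal curvature formula gives
\begin{align*}
K_{g_A} \;=\; -\mu^{-2}\,\Delta_{\mathbb{R}^2}\log\mu \;=\; \mu^{-2}\left(\frac{n-1}{r^2} + 1\right) \;>\; 0,
\end{align*}
so $(\mathbb{H},g_A)$ is a positively-curved surface --- the natural setting for Bonnet--Myers and Bishop--Gromov comparison in the metric-geometric sense.

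\medskip
To organize the case analysis, I would invoke the classification in \cite{KM} to enumerate the possible profile curves: straight rays through the origin (hyperplanes, $\lambda=1$), vertical lines (cylinders), semicircles of radius $\sqrt{2n}$ (spheres), and embedded closed or properly embedded noncompact curves of more complicated type. In the first three cases $F(\Sigma)$ is computed explicitly and bounded. In the remaining cases, the drift maximum principle confines the compact portions of $\gamma$ to a Euclidean disk of radius $\sqrt{2n}$, on which $\mu$ admits both upper and positive lower bounds away from the axis; positivity of $K_{g_A}$ then yields a Myers-type diameter bound on geodesic segments lying in this region, and hence a uniform bound on their $g_A$-length.

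\medskip
The main obstacle is that $K_{g_A}$ degenerates wherever $\mu\to 0$ --- namely near the axis $\{r=0\}$ and at spatial infinity --- so classical Myers does not apply globally. The hard part will be coupling the interior positive-curvature comparison with quantitative decay estimates on $\mu$ along the ends of $\gamma$, in order to show that the degenerate-region contributions to $\int_\gamma \mu\, ds$ are uniformly negligible. This interplay between sharp interior comparison and asymptotic tail control is where I expect the metric-geometric techniques alluded to in the abstract to produce the explicit constant $E_n$.
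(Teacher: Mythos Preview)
Your setup is correct --- reduce to the $g_A$-length of the profile curve and exploit the positivity of $K_{g_A}$ --- but you have misread the geometry, and this sends the rest of the sketch in the wrong direction.

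First, $K_{g_A}$ does \emph{not} degenerate at the ends of $\mathbb{H}$. From your own formula $K_{g_A}=\mu^{-2}\bigl((n-1)/r^2+1\bigr)$, the factor $\mu^{-2}=r^{-2(n-1)}e^{(x^2+r^2)/2}$ blows up as $r\to 0$, $r\to\infty$, or $|x|\to\infty$; hence $K_{g_A}\to+\infty$ there. The Gauss curvature attains a \emph{global} positive minimum $\kappa_n>0$ at an interior point of $\mathbb{H}$. There is no ``degenerate region'' and no tail analysis is needed; the interior/exterior decomposition you propose is aimed at a nonexistent difficulty.

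The genuine obstruction is different: $(\mathbb{H},g_A)$ is \emph{incomplete}, so the classical Bonnet--Myers and Bishop--Gromov theorems are not available as stated. The paper circumvents this by abandoning the ambient surface and working instead on the compact disk $\Omega$ enclosed by the simple closed geodesic $\sigma$ (which exists by the Jordan curve theorem, after the Kleene--M\o ller classification reduces everything to the doughnut case). Equipped with its intrinsic metric, $(\Omega,d^\Omega)$ is a complete length space of curvature $\geq\kappa_n$ in the Alexandrov sense; Toponogov's globalization theorem then gives angle comparison for \emph{all} triangles in $\Omega$, and an iterated polygon comparison against $\mathbb{S}^2_{1/\sqrt{\kappa_n}}$ forces $L_A(\sigma)\leq 2\pi/\sqrt{\kappa_n}$. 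That is where the explicit $E_n=2\pi\omega_{n-1}(4\pi)^{-n/2}\kappa_n^{-1/2}$ comes from.

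Two smaller points. Your claim that the drift maximum principle confines the profile to the Euclidean disk of radius $\sqrt{2n}$ is false: at the farthest point of a compact self-shrinker one gets $|x|\geq\sqrt{2n}$, not $\leq$, and indeed every embedded self-shrinking doughnut must intersect the sphere of radius $\sqrt{2n}$, so part of it lies outside. And by the Kleene--M\o ller classification there are no ``properly embedded noncompact curves of more complicated type'' in the embedded rotationally symmetric setting; after disposing of the plane, sphere, and cylinder by direct computation, the only remaining case is a simple closed profile curve.
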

	
	The constants $E_n$ we obtain in Theorem \ref{upper bound on length} are explicit (see (\ref{E_n constants})). For example, when $n=2$ we have $E_2 \sim 2.24759$, while the $2$-dimensional Angenent torus constructed in \cite{Ang} has entropy around $1.85122$, as computed numerically in \cite{yakov},\cite{HN}. We remark that for $n=1$, there is no upper entropy bound for the family of Abresch-Langer immersed self-shrinking curves \cite{AL}. We also remark that if we exclude the stationary plane, the lower bounds can be improved to $\lambda (\mathbb S^n)$ (\cite{CIMW}, \cite{BW}); if we consider only self shrinking doughnuts, the lower bound can be improved to $\lambda (\mathbb S^1) = \sqrt{2\pi/e} \sim 1.52035$, since they all have non-trivial fundamental groups \cite{HW}.
	
	The proof of Theorem \ref{upper bound on length} will make essential use of the fact that Angenent's Riemannian metrics have Gaussian curvatures bounded below by strictly positive constants (see Section \ref{section basic definition} for the definition of the Angenent metrics).
	
	As an application, our next theorem gives a smooth compactness result for the space of embedded rotationally symmetric self-shrinkers.
	
	\begin{thm} \label{smooth compactness}
		For each $n\ge 2$, the space of complete embedded rotationally symmetric self-shrinkers $\Sigma^n\subseteq\mathbb R^{n+1}$ is compact in the $C^\infty_\mathrm{loc}$-topology.
	\end{thm}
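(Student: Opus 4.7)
The plan is to reduce the compactness of hypersurfaces to a compactness problem for geodesics in Angenent's Riemannian half-plane $(\mathbb{H}, g_A)$, where $\mathbb{H} = \{(x,r) : r > 0\}$ and $g_A$ is the (conformal) Angenent metric of Section \ref{section basic definition}. The key correspondence is that a complete embedded rotationally symmetric self-shrinker $\Sigma \subset \mathbb R^{n+1}$ is the rotation about the axis $\{r=0\}$ of an embedded profile curve $\gamma \subset \overline{\mathbb H}$ which is a $g_A$-geodesic in $\mathbb H$, meets $\partial \mathbb H$ orthogonally at any endpoints, and (up to a dimensional constant) satisfies $L_{g_A}(\gamma) = (4\pi)^{n/2}\lambda(\Sigma)$ via the Gaussian area formula.

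Given a sequence $\{\Sigma_i\}$ with profile curves $\gamma_i$, Theorem \ref{upper bound on length} yields the uniform upper bound $L_{g_A}(\gamma_i) \le C_n$. Combining this with the strictly positive lower bound on the Gaussian curvature of $g_A$ (invoked in the proof of Theorem \ref{upper bound on length}) and the Gaussian decay factor in $g_A$ confines the $\gamma_i$ to a fixed Euclidean compact set $K \subset \overline{\mathbb H}$. I parameterize each $\gamma_i$ by $g_A$-arclength on $[0, L_i]$ with $L_i \to L_\infty > 0$ along a subsequence (the lower bound $\lambda \ge 1$ in Theorem \ref{upper bound on length} prevents collapse to a point). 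Smooth dependence of the $g_A$-geodesic ODE on initial data, together with Arzel\`a--Ascoli, then produces a $C^\infty_{\mathrm{loc}}$ subsequential limit $\gamma_\infty$ that is itself a $g_A$-geodesic.

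To conclude I verify the limit curve satisfies the requirements. Orthogonal meeting with $\partial \mathbb H$ is preserved under smooth convergence. Embeddedness of $\gamma_\infty$ follows because preservation of one-dimensional Hausdorff measure along smooth convergence forces $\gamma_\infty$ to be a single cover of its image, while ODE uniqueness of $g_A$-geodesics rules out tangential self-intersections and any transverse self-intersection would persist under perturbation -- each case contradicting embeddedness of $\gamma_i$. Rotating $\gamma_\infty$ about $\{r=0\}$ then yields a complete embedded rotationally symmetric self-shrinker $\Sigma_\infty$, and the smooth convergence of profile curves gives $\Sigma_i \to \Sigma_\infty$ in $C^\infty_{\mathrm{loc}}(\mathbb R^{n+1})$.

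The main technical obstacle is the local analysis at the rotation axis $\partial \mathbb H$, where $g_A$ degenerates (the conformal factor vanishes like $r^{n-1}$). One must check carefully that the geodesic equation extends smoothly across orthogonal crossings of $\partial \mathbb H$, that the rotated hypersurface is smooth at the axis, and that this smoothness is preserved in the limit; once these boundary considerations are handled, the remaining steps are standard ODE compactness.
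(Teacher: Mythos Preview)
Your argument has a genuine gap at the step ``the Gaussian decay factor in $g_A$ confines the $\gamma_i$ to a fixed Euclidean compact set $K \subset \overline{\mathbb H}$.'' The decay of the conformal factor $r^{2(n-1)}e^{-(x^2+r^2)/2}$ cuts exactly the wrong way: because the factor tends to zero as $(x,r)\to\infty$ or $r\to 0$, curves of uniformly bounded $g_A$-length can wander arbitrarily far in the Euclidean sense or approach the axis (indeed $(\mathbb H,g_A)$ has finite diameter and its metric completion adds a single point at infinity). Neither the length bound from Theorem~\ref{upper bound on length} nor the positive curvature lower bound yields the confinement you assert; the paper in fact deduces confinement only \emph{a posteriori}, as a corollary of Theorem~\ref{smooth compactness}. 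Without it, you have no initial-point compactness to feed into Arzel\`a--Ascoli.

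Even after anchoring the curves (the paper does this via Lemma~\ref{all tori intersect}, forcing each $\sigma_k$ to meet the fixed Angenent profile $\sigma_a$), the second and more serious gap remains: the limit geodesic $\gamma_\infty$ need not be closed. By Theorem~\ref{partial classification of KM} it could be the profile of the plane $\mathcal P$, the sphere $\mathcal S$, or the cylinder $\mathcal C$, and the closed curves $\gamma_i$ would then converge to it with multiplicity $N\ge 2$ while each $\gamma_i$ stays embedded---so your ``transverse self-intersections persist, tangential ones violate ODE uniqueness'' argument does not exclude this scenario. Ruling it out is the core of the paper's proof: one shows (via maximum-principle barriers built from translated and scaled Angenent doughnuts) that the convergence is locally $N$-sheeted and graphical over $\Sigma_{\gamma_\infty}$, then applies the Colding--Minicozzi mechanism to produce a positive solution of $Lu=0$ on the limit, contradicting the $L$-instability of every self-shrinker. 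Your outline skips this entire step, and the ``main technical obstacle'' you identify (smoothness at the axis) is in fact a non-issue once Theorem~\ref{partial classification of KM} is in hand.
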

	
	Under some extra assumptions on the bounds on entropy and genus, there are already several smooth compactness results for self-shrinkers. Colding and Minicozzi proved in \cite{CM2} the smooth compactness of the set of all complete embedded self-shrinkers $\Sigma^2\subseteq\mathbb R^3$ with bounded genus and Euclidean volume growth (see also \cite[Theorem 1.4]{DX}). Later Sun and Wang proved in \cite{SunWang} a similar compactness theorem for embedded self-shrinkers in $\mathbb R^3$ with fixed genus and uniformly bounded entropy. In particular, the $n=2$ case of Theorem \ref{smooth compactness} follows from the main theorem in \cite{SunWang} and Theorem \ref{upper bound on length}. We remark that there are more compactness results for two dimensional self-shrinkers in general, even in higher codimension \cite{ChenMa1}. In the rotationally symmetric situation, Mramor \cite{Alex} proved several compactness results on the space of compact embedded rotationally symmetric self-shrinkers in $\mathbb R^{n+1}$ with various assumptions on $n$, bounds on entropy and convexity of the profile curves. Theorem \ref{smooth compactness} is a natural generalization of the results therein.
	
	Theorem \ref{smooth compactness} has several consequences, which include an index upper bound (Corollary \ref{cor index upper bound}), finiteness of the set of possible entropy values (Corollary \ref{finiteness of entropy value}) and the following finiteness theorem. 
	
	\begin{thm} \label{finitely many symmetric one}
		For each $n\ge 2$, up to rigid motions there are only finitely many complete embedded rotationally symmetric self-shrinkers in $\mathbb R^{n+1}$ which are symmetric with respect to the hyperplane perpendicular to the axis of rotation. 
	\end{thm}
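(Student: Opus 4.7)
The plan is to apply Theorem \ref{smooth compactness} to the reflection-symmetric subfamily and then upgrade compactness to discreteness via an ODE shooting analysis.

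First I would normalize by a rigid motion so that the axis of rotation is the $x^1$-axis and the reflection hyperplane is $\{x^1=0\}$; this costs no generality, since for a non-cylindrical rotationally symmetric shrinker the shrinker equation $H = \tfrac{1}{2}\langle x, \vec n\rangle$ forces any such reflection plane to pass through the origin. The subfamily $\mathcal M$ of normalized such shrinkers is $C^\infty_{\mathrm{loc}}$-closed inside the compact family of Theorem \ref{smooth compactness}, since both the rotational and reflection symmetry pass to smooth limits. Hence $\mathcal M$ is itself $C^\infty_{\mathrm{loc}}$-compact. Each $\Sigma\in\mathcal M$ is obtained by revolving a smooth profile curve $\gamma(t)=(x(t),r(t))$ in the open half-plane $\{r>0\}$, arc-length parametrized so that $\gamma(0)\in\{x=0\}$; the reflection symmetry $\gamma(-t)=(-x(t),r(t))$ then forces the initial data $\gamma(0)=(0,r_0)$, $\gamma'(0)=(1,0)$ for a single parameter $r_0>0$. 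The self-shrinker equation becomes a real-analytic second-order ODE for $\gamma$, so the maximal solution $\gamma_{r_0}$ depends real-analytically on $r_0$, and $\mathcal M$ is in continuous bijection with a compact subset $\mathcal A\subseteq(0,\infty)$, bounded above and away from $0$.

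It remains to show $\mathcal A$ is discrete. Suppose for contradiction that $r_k\to r_\infty$ with distinct $r_k\in\mathcal A$. The validity of $r_0\in\mathcal A$ (that $\gamma_{r_0}$ generates a complete embedded shrinker) can be encoded as the vanishing of a real-analytic shooting function $\Phi(r_0)$ built from the ODE flow: for closed shrinkers (sphere or Angenent-type doughnut) $\Phi$ records the first-return to $\{x=0\}$ with horizontal tangent, or the orthogonal meeting of the axis; for the generalized cylinder it is a stationary-profile condition; for non-compact shrinkers of the type covered by \cite{KM} it is an asymptotic-cone condition. Real-analyticity of $\Phi$ together with the accumulating zeros $\{r_k\}$ would force $\Phi\equiv 0$ on a neighborhood of $r_\infty$, giving a continuous one-parameter family of distinct complete embedded rotationally- and reflection-symmetric shrinkers near $\Sigma_\infty$. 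Differentiating in $r_0$ would then yield a nontrivial rotationally- and reflection-symmetric Jacobi field on $\Sigma_\infty$; since the Jacobi operator of the shrinker restricts on this symmetric subspace to a second-order ODE along the profile, ODE rigidity combined with the structure results of \cite{KM} rules out such an integrable family. Hence $\mathcal A$ is discrete and compact, therefore finite, proving Theorem \ref{finitely many symmetric one}.

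The hard part is the last step, namely formulating $\Phi$ uniformly across the possible topological types of $\Sigma_\infty$ (sphere, cylinder, Angenent-type doughnut, or non-compact asymptotic-to-cone type) and verifying that $\Phi$ does not vanish identically on any subinterval. In the compact case this reduces to the non-degeneracy of a Poincar\'e-type first-return map of the Angenent ODE; in the non-compact case it requires a refined use of the asymptotic analysis from \cite{KM}. A conceptually cleaner alternative, avoiding the case analysis, is to study the kernel of the Jacobi operator on $\Sigma_\infty$ restricted to the rotationally- and reflection-symmetric subspace directly: any nontrivial element would need to integrate to an honest one-parameter family of valid shrinkers, contradicting the $C^\infty_{\mathrm{loc}}$-compactness of $\mathcal M$ once the family escapes any fixed neighborhood of $\Sigma_\infty$.
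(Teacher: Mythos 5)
Your overall skeleton matches the paper's: reduce to profile curves shot horizontally from the $r$-axis, use Theorem \ref{smooth compactness} to confine the shooting parameter $r_0$ to a compact subset of $(0,\infty)$, and then argue that the set of admissible $r_0$ is discrete via real-analyticity of a shooting map. The paper carries out the last (and only nontrivial) step by passing to the Poincar\'e first-return map $P$ of the geodesic equation in $(\mathbb H, g_A)$ and quoting Mramor \cite[Lemma 5.1 and Theorem 1.3]{Alex}: by analyticity the fixed-point set of $P$ consists of isolated points or analytic curves, the analytic-curve alternative is excluded by a separate geometric argument, and hence the restricted map $P_f$ has isolated fixed points, through which every reflection-symmetric profile curve must pass.

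Precisely this step is missing from your proposal, and you acknowledge as much (``the hard part is the last step''). Two concrete problems. First, the observation that accumulating zeros of an analytic $\Phi$ force $\Phi\equiv 0$ near $r_\infty$ only transfers the difficulty: you must then exclude the resulting one-parameter family, and neither of your suggested mechanisms does so. The Jacobi-field argument is asserted rather than proved: a closed geodesic in a surface can perfectly well carry a nontrivial periodic normal Jacobi field (every closed geodesic on a round sphere does), so ``ODE rigidity'' is not automatic, and nondegeneracy of the Angenent closed geodesics is not known a priori --- supplying a substitute for it is exactly the content of Mramor's analysis that the paper cites. Second, your fallback claim that an integrable family would ``contradict the $C^\infty_{\mathrm{loc}}$-compactness of $\mathcal M$'' is false as stated: a compact family can contain a continuum (compactness is entirely consistent with $\mathcal A$ being a closed interval), and compactness yields finiteness only after discreteness is established, which is the point at issue. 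Minor further issues: your normalization $\gamma'(0)=(1,0)$ omits the plane $\mathcal P$, whose profile is the $r$-axis itself, and the ``asymptotic-cone'' case you set up does not arise, since by Theorem \ref{partial classification of KM} the only non-doughnut examples are $\mathcal P$, $\mathcal S$ and $\mathcal C$, so the theorem reduces immediately to doughnuts. These last points are cosmetic, but the discreteness step is essential, and without it (or a citation supplying it, as in the paper) the proof is incomplete.
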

	
	We remark that embeddedness is necessary:  there are infinitely many immersed rotationally symmetric self-shrinkers constructed in \cite{DK} with this extra reflection symmetry. 
	
	In Section \ref{section basic definition}, we recall the basic definitions and results needed. In Section \ref{upper bound section} we prove Theorem \ref{upper bound on length}. In Section \ref{section smooth compactness} we prove Theorem \ref{smooth compactness} and \ref{finitely many symmetric one}. 
	
	\section{Background} \label{section basic definition}
	\subsection{Entropy and Self-shrinkers} We follow the notations in \cite{CM1}. Let $\Sigma \subset \mathbb R^{n+1}$ be an $n$-dimensional properly embedded hypersurface. For each $x_0\in \mathbb R^{n+1}$, $t_0>0$, define the $F$-functional 
	\begin{align*}
		F_{x_0, t_0} (\Sigma):= \frac{1}{(4\pi t_0)^{n/2}} \int_\Sigma e^{\frac{-|x-x_0|^2}{4t_0}} d\mu,
	\end{align*}
	where $d\mu$ is the volume form of $\Sigma$. The entropy of $\Sigma$ is defined as 
	\begin{align*}
		\lambda (\Sigma) = \sup_{x_0, t_0} F_{x_0, t_0} (\Sigma). 
	\end{align*}
	Using Huisken's monotonicity formula \cite{Huisken}, it was proved in \cite{CM1} that if $\{\Sigma_t\}_{t\in I}$ satisfies the MCF equation (\ref{MCF eqn}), then the entropy $t \mapsto \lambda (\Sigma_t)$ is non-increasing, and is constant if and only if $\Sigma_t$ is self-shrinking. We recall the following lemma proved in \cite[Section 7.2]{CM1}.
	
	\begin{prop} \label{equality between lambda and F_0,1}
		Let $\Sigma$ be a properly embedded self-shrinker. Then $\lambda(\Sigma) =F_{0,1} (\Sigma)$. 
	\end{prop}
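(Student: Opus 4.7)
The plan is to exploit the fact that a self-shrinker generates its own MCF under scaling, and then apply Huisken's monotonicity formula along this flow. The easy inequality $F_{0,1}(\Sigma)\le \lambda(\Sigma)$ is immediate from the definition, so the content is the reverse bound
\[
F_{x_0,t_0}(\Sigma)\le F_{0,1}(\Sigma) \qquad \text{for every } (x_0,t_0)\in \mathbb R^{n+1}\times (0,\infty).
\]
I would obtain this bound by sending an auxiliary MCF time parameter to $-\infty$ and reading off the limit via the scaling behaviour of $F$.

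First I would record the scaling identity
\[
F_{x_0,\tau}(a\Sigma)=F_{x_0/a,\,\tau/a^{2}}(\Sigma),\qquad a>0,
\]
which follows from the change of variables $x=ay$ in the definition of $F$. A direct computation from the self-shrinker equation $H=\frac12\langle x,\vec n\rangle$ shows that $\bar\Sigma_s:=\sqrt{-s}\,\Sigma$, $s<0$, satisfies the MCF equation \eqref{MCF eqn}, with $\bar\Sigma_{-1}=\Sigma$.

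Given $(x_0,t_0)$ with $t_0>0$, set $\sigma_0:=t_0-1$, so $\sigma_0>-1$. Huisken's monotonicity applied to $\bar\Sigma_s$ with base point $(x_0,\sigma_0)$ says that
\[
\phi(s):=F_{x_0,\,\sigma_0-s}(\bar\Sigma_s)
\]
is non-increasing for $s<\min(0,\sigma_0)$. At $s=-1$ it gives $\phi(-1)=F_{x_0,t_0}(\Sigma)$, while the scaling identity above rewrites $\phi$ as
\[
\phi(s)=F_{x_0/\sqrt{-s},\,(\sigma_0-s)/(-s)}(\Sigma).
\]
Since $x_0/\sqrt{-s}\to 0$ and $(\sigma_0-s)/(-s)\to 1$ as $s\to-\infty$, one reads off $\lim_{s\to-\infty}\phi(s)=F_{0,1}(\Sigma)$. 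Combined with monotonicity of $\phi$, this gives $F_{x_0,t_0}(\Sigma)\le F_{0,1}(\Sigma)$; taking the supremum over $(x_0,t_0)$ finishes the proof.

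The main obstacle, and the only nontrivial analytic step, is justifying the limit $\lim_{s\to-\infty}\phi(s)=F_{0,1}(\Sigma)$. This amounts to continuity of $(x_1,\tau)\mapsto F_{x_1,\tau}(\Sigma)$ at $(0,1)$, which I would verify by a dominated convergence argument using a Gaussian majorant whose integral is finite thanks to the polynomial volume growth of properly embedded self-shrinkers (a standard consequence of $H=\frac12\langle x,\vec n\rangle$). Everything else in the proof is a direct change of variables or a reference to Huisken's formula.
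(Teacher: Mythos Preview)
Your argument is correct and is precisely the standard proof from \cite[Section~7.2]{CM1}, which the paper simply cites without reproducing; the paper gives no independent proof of this proposition, so there is nothing further to compare.
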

	
	A hypersurface $\Sigma$ is a self-shrinker if and only if $\Sigma$ is critical with respect to the functional $\Sigma \mapsto F_{0,1} (\Sigma)$ \cite[Proposition 3.6]{CM1}. The second variation of $F_{0,1}$ at a self-shrinker is calculated in \cite[Section 4]{CM1}: for any normal variation $\Sigma_s$ of $\Sigma$ given by $f \vec n$, we have 
	\begin{align*}
		\frac{\partial^2 }{\partial s^2 } F_{0,1} (\Sigma_s) \bigg|_{s=0} = -\int_{\Sigma} f L f e^{-\frac{|x|^2}{4}} d\mu_\Sigma,
	\end{align*}
	where 
	\begin{align} \label{L operator definition}
		L = \Delta + |A|^2 -\frac 12 \langle x , \nabla (\cdot)\rangle + \frac 12
	\end{align}
	is the stability operator on $\Sigma$. It is also shown that all self-shrinkers in $\mathbb R^{n+1}$ are $L$-unstable (\cite{CM1}, see also \cite[Theorem 0.5]{CM2}). 
	
	\subsection{Rotationally symmetric self-shrinkers; Angenent doughnuts}
	Let $l$ be any line in $\mathbb{R}^{n+1}$ passing through the origin. A hypersurface $\Sigma$ of $\mathbb{R}^{n+1}$ is {\sl rotationally symmetric} with respect to $l$ if $R\Sigma = \Sigma$ for all rotations $R \in SO(n+1)$ fixing $l$.
	Assume $n\ge 2$ and let $\mathbb S^{n -1}\subset \mathbb R^n$ be the $(n-1)$-dimensional unit sphere. We denote the upper half-plane by
	\begin{align*}
		\mathbb H = \{ (x, r) \in \mathbb R^2 : r >0\}.
	\end{align*}
	
	\begin{dfn}
		Let $I$ be any interval in $\mathbb R$, or $I=\mathbb S^1$. Let $\sigma : I \to \mathbb H$ be a smooth embedding. Then the embedded hypersurface $\Sigma_\sigma$ in $\mathbb R^{n+1}$ with profile curve $\sigma(s) = (x(s), r(s))$ is given by 
		\begin{align*}
			\Sigma_\sigma = \{ (x(s), \omega r(s)) : \omega\in \mathbb S^{n-1}, s\in I\}.
		\end{align*}
	\end{dfn}
	
	The hypersurface $\Sigma_\sigma$ is rotationally symmetric with respect to $\ell = \mathbb{R} e_1$.
	
	\begin{prop}
		Given a profile curve $\sigma$. Then $\Sigma_\sigma$ is a self-shrinker if and only if $\sigma$ is a geodesic in $(\mathbb H, g_A)$, where $g_A$ is the incomplete {\bf Angenent metric} given by 
		\begin{align}\label{dfn of ang metric}
			g_A = r^{2(n-1)} e^{-\frac{x^2 + r^2}{2}} (dx^2 + dr^2).
		\end{align} 
	\end{prop}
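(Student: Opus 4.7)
The strategy is to identify $F_{0,1}(\Sigma_\sigma)$ with the length of $\sigma$ in the Angenent metric, so that the variational characterization of self-shrinkers converts the geometric PDE on $\Sigma_\sigma$ into the ODE for geodesics on $(\mathbb H, g_A)$.

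First, I would parametrize $\Sigma_\sigma$ by $F(s,\omega)=(x(s),r(s)\omega)$ with $s\in I$ and $\omega\in\mathbb S^{n-1}$. Since $\omega\cdot\partial_\omega=0$, the tangent vectors $F_s$ and $F_\omega$ are orthogonal, so the induced volume form factors as $r(s)^{n-1}|\sigma'(s)|\,ds\,d\mathrm{vol}_{\mathbb S^{n-1}}$. Using that $|x|^2=x(s)^2+r(s)^2$ on $\Sigma_\sigma$ and integrating out the sphere factor, I would obtain
\begin{align*}
F_{0,1}(\Sigma_\sigma)=\frac{\omega_{n-1}}{(4\pi)^{n/2}}\int_I r^{n-1}e^{-(x^2+r^2)/4}|\sigma'|\,ds=\frac{\omega_{n-1}}{(4\pi)^{n/2}}\,L_{g_A}(\sigma),
\end{align*}
where $\omega_{n-1}=\mathrm{vol}(\mathbb S^{n-1})$. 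The crucial point is that the conformal factor $r^{2(n-1)}e^{-(x^2+r^2)/2}$ in (\ref{dfn of ang metric}) is exactly the square of the weight $r^{n-1}e^{-(x^2+r^2)/4}$ appearing in the arc-length integrand, so the last equality follows by definition of $g_A$.

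Next, I would invoke the characterization (recalled in Section \ref{section basic definition}, from \cite[Proposition 3.6]{CM1}) that $\Sigma$ is a self-shrinker if and only if $\Sigma$ is a critical point of $F_{0,1}$. The forward direction is then immediate: a smooth variation of $\sigma$ through embeddings into $\mathbb H$ induces a rotationally symmetric variation of $\Sigma_\sigma$ in $\mathbb R^{n+1}$, so criticality of $\Sigma_\sigma$ for $F_{0,1}$ forces criticality of $\sigma$ for $L_{g_A}$, which is the definition of a $g_A$-geodesic. For the reverse direction I would appeal to Palais's principle of symmetric criticality: the $SO(n)$-action fixing the axis of rotation preserves both $F_{0,1}$ and the class of rotationally symmetric hypersurfaces, so criticality within this symmetric subclass already implies criticality with respect to arbitrary compactly supported normal variations, hence the self-shrinker equation. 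Alternatively, and more concretely, one can skip symmetric criticality and instead verify the equivalence pointwise by writing $H(p)$ and $\langle p,\vec n\rangle$ at $p=(x(s),r(s)\omega)\in\Sigma_\sigma$ in terms of $\sigma$, using the principal curvatures $\kappa_\sigma$ (from the profile in the plane) and an $(n-1)$-fold repeated rotational curvature, and then identifying $H-\tfrac12\langle x,\vec n\rangle=0$ with the Euler--Lagrange equation of $L_{g_A}$ obtained from the Christoffel symbols of the conformal metric (\ref{dfn of ang metric}).

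The computations are routine, so the only real obstacle is bookkeeping: matching the Gaussian density against the Angenent conformal factor correctly, and, in the pointwise variant, keeping careful track of signs in the formulas for the principal curvatures and the choice of outward unit normal $\vec n$.
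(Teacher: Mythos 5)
Your proposal is correct, and it follows the standard route that the paper itself takes for granted: the paper states this proposition without proof but immediately records the very identity you derive, $F_{0,1}(\Sigma_\sigma)=(4\pi)^{-n/2}\omega_{n-1}L_A(\sigma)$, as a ``direct calculation,'' and your weight-matching of $r^{n-1}e^{-(x^2+r^2)/4}$ against the conformal factor of $g_A$ is exactly right. The only step genuinely needing justification is the reverse implication (criticality among rotationally symmetric variations implies the full shrinker equation), and either of your two routes closes it; the cleanest is the pointwise observation that $H-\tfrac12\langle x,\vec n\rangle$ is constant on the $SO(n)$-orbits of $\Sigma_\sigma$, so its vanishing against all rotationally invariant test functions in the first variation of $F_{0,1}$ already forces it to vanish identically.
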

	Direct calculations give
	\begin{align*}
		F_{0,1} (\Sigma_\sigma) = (4\pi)^{-n/2} \omega_{n-1} L_A(\sigma), 
	\end{align*}
	where $\omega_{n-1}$ is the surface area of $\mathbb S^{n-1}$ and $L_A(\sigma)$ is the length of $\sigma$ in $(\mathbb H, g_A)$. Hence if $\Sigma_\sigma$ is a self-shrinker, 
	\begin{align} \label{relation between entropy and length}
		\lambda (\Sigma_\sigma) = (4\pi)^{-n/2} \omega_{n-1} L_A(\sigma)
	\end{align}
	by Proposition \ref{equality between lambda and F_0,1}.

	Using a shooting method in ODEs, Angenent constructed in \cite{Ang} a compact embedded rotationally symmetric self-shrinker in $\mathbb R^{n+1}$ for each $n\ge 2$. The profile curve of the examples in \cite{Ang} are convex and symmetric with respect to the $r$-axis. The self-shrinkers so constructed are called {\sl Angenent doughnuts}. More recently, Drugan and Nguyen in \cite{DN} used a geometric flow to construct compact embedded rotationally symmetric self-shrinkers in $\mathbb R^{n+1}$ with the same property of the profile curve for all $n\ge 2$. It is not known if both constructions in \cite{Ang} and \cite{DN} resulted in the same self-shrinkers. On the other hand, Kleene and the third named author proved the following partial classification, which we will be making use of in the present paper:
	
	\begin{thm}{\cite[Theorem 2]{KM}} \label{partial classification of KM}
		Let $\Sigma$ be a complete embedded rotationally symmetric self-shrinker in $\mathbb R^{n+1}$. Then up to rigid motion, $\Sigma$ is either 
		\begin{itemize}
			\item [(i)] the hyperplane $\mathcal P = \{0\}\times \mathbb R^n$,
			\item [(ii)] the round sphere $\mathcal S$ of radius $\sqrt {2n}$, 
			\item [(iii)] the round cylinder $\mathcal C =  \mathbb R \times \mathbb S^{n-1} $ with radius $\sqrt{2(n-1)}$, or
			\item [(iv)] diffeomorphic to $ \mathbb S^1\times \mathbb S^{n-1} $.  
		\end{itemize}
	\end{thm}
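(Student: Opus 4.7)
The plan is to reduce the classification to a planar ODE analysis for the profile curve $\sigma$ in $(\mathbb H, g_A)$. Write $\sigma(s) = (x(s), r(s))$ in Euclidean arclength with tangent angle $\theta$, so that $x' = \cos\theta$ and $r' = \sin\theta$. For a suitable choice of outward normal one has $\langle x, \vec n\rangle = x\sin\theta - r\cos\theta$ and $H_{\Sigma_\sigma} = -\theta' + (n-1)\cos\theta/r$, so the self-shrinker equation is equivalent to the autonomous first-order system
\begin{align*}
x' = \cos\theta, \qquad r' = \sin\theta, \qquad \theta' = (n-1)\frac{\cos\theta}{r} - \tfrac12\bigl(x\sin\theta - r\cos\theta\bigr),
\end{align*}
which is just the geodesic equation for $g_A$. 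Completeness of $\Sigma$ translates to $\sigma$ being maximal as a solution, and smoothness of $\Sigma_\sigma$ across the rotation axis $\{r = 0\}$ forces $\sigma$ to meet that axis orthogonally whenever an end of $\sigma$ limits there. Since any rigid motion of $\mathbb R^{n+1}$ preserving the shrinker equation must fix the origin, up to $O(n+1)$ we may assume the rotation axis is $\mathbb R e_1$.

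I would then split on the topology of the domain $I$. If $I = \mathbb S^1$, then $\sigma$ is a closed embedded geodesic in $\mathbb H$ and $\Sigma_\sigma$ is diffeomorphic to $\mathbb S^1 \times \mathbb S^{n-1}$, giving case (iv), for which no further analysis is required by the statement. Otherwise $I$ is an open interval whose two ends are each either perpendicular to $\{r = 0\}$ or escape to infinity in $\mathbb H$, yielding three subcases determined by the number of axis hits, which must lie in $\{0,1,2\}$. The remaining task is three rigidity statements, each pinning $\sigma$ to a distinguished straight or circular barrier: the horizontal line $r \equiv \sqrt{2(n-1)}$ for $0$ hits (cylinder, case (iii)); a vertical ray translatable to $x \equiv 0$ for $1$ hit (hyperplane, case (i)); and the upper semicircle of radius $\sqrt{2n}$ about the origin for $2$ hits (sphere, case (ii)).

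These three rigidity statements form the bulk of the proof, and I would establish them by combining three ingredients. First, the reflection invariance $x \mapsto -x$ of the ODE together with backward uniqueness forces any solution meeting the axis twice to be symmetric about the midline of its two axis meetings, hence to bound a symmetric convex region in $\mathbb H$; a first-variation/convexity computation then identifies the resulting curve with the unique radius-$\sqrt{2n}$ semicircle. Second, a tangent-cone analysis at infinity shows that any end of $\sigma$ escaping to infinity must be asymptotic either to the cylinder $\mathcal C$ or to a ray through the origin, pinning the asymptotic behaviour to $r \to \sqrt{2(n-1)}$ or to an asymptotic ray. Third, a maximum-principle comparison between $\sigma$ and the barrier profiles forbids embedded crossings with the cylinder and hyperplane profiles without coincidence. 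The most delicate of the three is the single-axis-hit case, where one must exclude nonplanar embedded horn profiles: the second ingredient reduces the problem to showing the nonaxis end is asymptotic to the vertical $r$-axis itself, after which orthogonality at the axis hit together with ODE uniqueness identifies $\sigma$ with a vertical ray, which a translation in $x$ sends to case (i).
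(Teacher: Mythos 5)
The paper does not prove this statement: it is quoted verbatim from Kleene--M\o ller \cite[Theorem 2]{KM}, so the only ``proof'' in the source is the citation. Your overall skeleton --- pass to the profile curve, observe that it is a maximal geodesic of $g_A$ meeting $\{r=0\}$ perpendicularly at any axis end, and split into the closed case and the cases of $0$, $1$ or $2$ axis intersections --- is the correct combinatorics and is in the spirit of the ODE analysis actually carried out in \cite{KM}; the topological case (iv) is indeed immediate once the profile curve is closed.

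However, the three rigidity arguments, which you correctly identify as the bulk of the proof, contain genuine gaps. First, the shrinker ODE $\theta' = \frac{x}{2}\sin\theta + \left(\frac{n-1}{r}-\frac{r}{2}\right)\cos\theta$ is invariant only under the reflection $x\mapsto -x$ about the line $x=0$; because of the explicit $x$-dependence it is \emph{not} invariant under reflection about the vertical midline of two arbitrary axis hits, so backward uniqueness does not force the two-hit profile to be symmetric about that midline. The entire difficulty of the compact case --- showing the hits occur at $\pm\sqrt{2n}$ --- is therefore untouched, and the asserted convexity of the enclosed region is also unproved. Second, in the one-hit case you finish by ``a translation in $x$'' sending a vertical ray to $x\equiv 0$; but horizontal translations do not preserve the shrinker equation (the hyperplane $\{x_1=c\}$ is a self-shrinker only for $c=0$), and a vertical ray based at $(x_0,0)$ with $x_0\neq 0$ is not a geodesic of $g_A$ at all. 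What must actually be shown is that the axis intersection occurs at the origin, and ``asymptotic to the $r$-axis at infinity together with a vertical tangent at the axis'' does not yield this by ODE uniqueness, since the two conditions are imposed at different points rather than as matching initial data at a common point. Third, the asymptotic alternative for ends (cylindrical or conical) and the exclusion of conical and capped-off cylindrical ends for complete embedded profiles are asserted without proof; this is precisely where the analytic content of \cite{KM} lies, in their study of the graph equation over rays and over all of $\mathbb{R}$. As it stands the proposal is a plausible roadmap for the cited result, not a proof of it.
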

	Note that in the last case of Theorem \ref{partial classification of KM}, $\Sigma = \Sigma_\sigma$ for some simple closed geodesic $\sigma$ in $(\mathbb H, g_A)$. This theorem is essential to the proof of Theorem \ref{smooth compactness}. 
	As a first simple but useful consequence, note that if a complete rotationally symmetric self-shrinker is embedded, it is automatically properly embedded.
	
	\section{Entropy bound for embedded rotationally symmetric self-shrinkers} \label{upper bound section}
	In this section we prove Theorem \ref{upper bound on length}. By a direct calculation using (\ref{dfn of ang metric}), the Gauss curvature of the Angenent metric $g_A$ is 
	\begin{align*}
		K = \frac{r^2+ (n-1)}{r^{2n}} e^{\frac{x^2 + r^2}{2}}.
	\end{align*}
	Note that $K$ is strictly positive. For each $n$, let $\kappa_n$ be the (positive) minimum of $K$. By simple calculus, one can find
	\begin{align*} 
		\kappa_n = \frac{y_n+(n-1)}{y^n_n} e^\frac{y_n}{2} , \ \ \text{ where } y_n = \frac{n-1 + \sqrt{9(n-1)^2 + 8(n-1)}}{2}.
	\end{align*}
	
	We will prove the following more general result.
	\begin{thm} \label{upper bound on length of minimal geodesic in a ball}
		Let $(M, g)$ be a $2$-dimensional Riemannian manifold so that $M$ is homeomorphic to $\mathbb R^2$ and $K_g \ge \kappa_g >0$, where $K_g$ is the Gauss curvature of $(M, g)$. Then every simple closed geodesic in $(M, g)$ has length $\le 2\pi/\sqrt{\kappa_g}$. 
	\end{thm}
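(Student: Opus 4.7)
The plan is to combine the Gauss--Bonnet theorem with a monotonicity argument for an ``isoperimetric'' quantity along the family of inner parallel curves of $\gamma$. Let $\gamma$ be a simple closed geodesic of length $L$. Since $M \cong \mathbb{R}^2$ is simply connected, the Jordan curve theorem yields a compact topological disk $D \subset M$ with $\partial D = \gamma$. Applying Gauss--Bonnet to $D$, using that $\gamma$ is a geodesic (so $\int_\gamma \kappa_{\mathrm{geod}}\, ds = 0$) and $\chi(D)=1$, one obtains $\int_D K_g\, dA = 2\pi$; combined with $K_g \ge \kappa_g$ this gives $A := \mathrm{Area}(D) \le 2\pi/\kappa_g$.

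For the main step, consider the distance-to-boundary function $u(x) := d_g(x,\gamma)$ on $D$, the superlevel sets $D_t := \{u > t\}$, and their total boundary lengths and areas $L(t) := \mathcal{H}^1(\partial D_t)$ and $A(t) := \mathrm{Area}(D_t)$. The coarea formula gives $A'(t) = -L(t)$. For a.e.\ regular value $t$, each connected component of $D_t$ is a topological disk (using the simple connectedness of $D$), so Gauss--Bonnet applied component-wise, together with $K_g \ge \kappa_g$, gives the first-variation inequality $L'(t) \ge \kappa_g A(t) - 2\pi N(t)$, where $N(t)$ is the number of components. For $t$ near $0$, $D_t$ is a single disk, so $N(t)=1$, and one checks that
\[
F(t) := L(t)^2 + \kappa_g\bigl(A(t) - \tfrac{2\pi}{\kappa_g}\bigr)^2
\]
satisfies $F'(t) = 2L(t)\bigl(L'(t) + 2\pi - \kappa_g A(t)\bigr) \ge 0$. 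As $t$ approaches the inradius of $D$ and $D_t$ shrinks away, $L(t),A(t)\to 0$, so $F(t) \to 4\pi^2/\kappa_g$. Hence, provided $D_t$ stays a single disk all the way through, $F(0) \le 4\pi^2/\kappa_g$, which rearranges to $L^2 \le 4\pi^2/\kappa_g - \kappa_g(A-2\pi/\kappa_g)^2 \le 4\pi^2/\kappa_g$, and we conclude $L \le 2\pi/\sqrt{\kappa_g}$.

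I expect the hard part to be handling topology changes of $\{u>t\}$ as $t$ increases: at a saddle point of $u$ the superlevel set can split into several components, and an elementary calculation shows that the component-wise sum $\sum_C F_C$ can then jump downward at such events, obstructing a direct monotonicity conclusion. To close the argument rigorously I would pursue one of two routes. Route (a): an approximation argument, replacing $\gamma$ by smooth simple closed curves $\gamma^\varepsilon$ lying just inside $D$ with strictly positive geodesic curvature (for instance by running curve shortening flow for a short time starting from a small inward normal perturbation of $\gamma$), so that the enclosed region is foliated smoothly by inner parallels remaining a single topological disk that shrinks to a point, and then passing to the limit $\varepsilon \to 0$. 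Route (b): a direct Morse-theoretic accounting for the downward jumps of $\sum_C F_C$ at splittings, using the tree-like structure of the cut locus of $\gamma$ in $D$ in two dimensions. Route (a) seems cleaner and is the one I would pursue first.
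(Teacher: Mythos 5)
The Gauss--Bonnet step is fine: since $\gamma$ bounds a disk $D$ and has vanishing geodesic curvature, $\int_D K_g\,dA = 2\pi$, hence $\mathrm{Area}(D)\le 2\pi/\kappa_g$. The fatal problem is the ``first-variation inequality'' $L'(t)\ge \kappa_g A(t)-2\pi N(t)$, which is false, and false for a reason not confined to the topology-change times you flag. For inner parallel sets the correct Fiala--Hartman inequality runs in the \emph{opposite} direction: $L'(t)\le \int_{D_t}K_g\,dA-2\pi\chi(D_t)$ for a.e.\ $t$, with possible additional downward jumps of $L$. The identity $L'(t)=-\int_{\partial D_t}\kappa_{\mathrm{geod}}$ that your derivation implicitly uses holds only where the normal exponential map from $\gamma$ is still injective; at the cut locus of $\gamma$ in $D$ the level set $\{u=t\}$ acquires corners and loses length strictly faster than the smooth first variation predicts, so Gauss--Bonnet only yields an upper bound on $L'$, never the lower bound you need. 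A flat model showing the failure with $N(t)\equiv 1$: for a long thin rectangle the sets $D_t$ are again rectangles and $L'(t)=-8<-2\pi$. This is precisely why the inner-parallel method proves isoperimetric \emph{lower} bounds for $L$ in terms of $A$; with the correct differential inequality your $F$ is non-increasing rather than non-decreasing, and the argument outputs $L^2\ge 4\pi^2/\kappa_g-\kappa_g\bigl(A-2\pi/\kappa_g\bigr)^2$, a bound in the wrong direction.

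Neither repair route closes this gap. Route (a): positivity of the geodesic curvature of $\gamma^\varepsilon$ at time zero does not prevent its inner parallels from developing a cut locus later (the inner parallels of a convex planar ellipse already do), because the unit-speed normal flow is not the curve shortening flow and is not smoothing; note also that $\gamma$ itself is a fixed point of curve shortening flow, so the perturbation, not the flow, would be doing the work, and the perturbed curve faces the same cut-locus obstruction. Route (b): the downward jumps of $\sum_C F_C$ at splittings and the pointwise failure of $L'\ge\kappa_g A-2\pi N$ between splittings both push $F$ the wrong way, and no accounting scheme is indicated that could still recover $F(0)\le\lim_t F(t)$. The paper's proof avoids parallels entirely: it shows that $\Omega$ with its induced intrinsic metric is a length space of curvature $\ge\kappa_g$ in the Alexandrov sense, invokes Toponogov's globalization theorem, inscribes geodesic polygons with vertices on $\sigma$, and bounds their perimeter by that of convex comparison polygons in $\mathbb S^2_{1/\sqrt{\kappa_g}}$, which is at most $2\pi/\sqrt{\kappa_g}$.
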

	
	Theorem \ref{upper bound on length} follows directly from Theorem \ref{upper bound on length of minimal geodesic in a ball} and (\ref{relation between entropy and length}). Indeed, the constants $E_n$ in Theorem \ref{upper bound on length} are given by 
	\begin{align} \label{E_n constants}
		E_n = \frac{2\pi \omega_{n-1}}{(4\pi)^{n/2} \sqrt{\kappa_n}}. 
	\end{align}
	
	
	In the Appendix we show that $2< E_n \le E_2$ for all $n\ge 2$ and 
	$$\lim_{n\to \infty} E_n =  \sqrt{\frac{4\pi}{3}} \sim 2.04665.$$ 
	
	When $M$ is homeomorphic to $\mathbb S^2$, Theorem \ref{upper bound on length of minimal geodesic in a ball} is a classical theorem in comparison geometry, where a proof can be found in \cite[Theorem 3.4.10]{Klingenberg}; in our situation, $(M, g)$ is non-compact and incomplete, and we are not able to find an exact reference in this generality. As a result, we provide a proof of Theorem \ref{upper bound on length of minimal geodesic in a ball} using the globalization theorem in metric geometry \cite{BBI}. 
	
	Let $\sigma$ be a simple closed geodesic in $(M, g)$. Since $M$ is homeomorphic to $\mathbb R^2$, by the Jordan curve theorem $\sigma$ divides $M$ into two connected components, where exactly one of them has compact closure. 
	
	\begin{dfn} \label{dfn of sigma and omega}
		Given a simple closed geodesic $\sigma$ in $(M, g)$, let $\Omega$ be the compact domain in $(M, g)$ with $\partial \Omega = \operatorname{Im} (\sigma)$. 
	\end{dfn}
	
	\begin{lem} \label{shorest geo in Omega}
		Let $p, q\in \Omega$. Then there is a simple geodesic $\gamma$ in $\Omega$ joining $p, q$, which is shortest among all piecewise $C^1$ curves in $\Omega$ joining $p$ and $q$. Moreover, 
		\begin{itemize}
			\item [(i)] if one of $p, q$ is in the interior of $\Omega$, $\gamma$ also lies in the interior of $\Omega$ (except possibly at the other end point), 
			\item [(ii)] if both $p, q$ are in $\operatorname{Im}(\sigma) = \partial \Omega$, then either $\gamma$ lies completely in $\partial \Omega$, or the interior of $\gamma$ lies inside the interior of $\Omega$.
		\end{itemize}
	\end{lem}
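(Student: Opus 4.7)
The plan is to first obtain a length-minimizer by a compactness argument, and then use the fact that $\partial\Omega = \operatorname{Im}(\sigma)$ is itself a geodesic of $(M,g)$ to upgrade regularity and establish the structural claims (i) and (ii). Since $\Omega$ is compact and path-connected, the infimum $d=\inf_\alpha L(\alpha)$ over piecewise $C^1$ curves $\alpha\subset\Omega$ from $p$ to $q$ is finite. Taking a minimizing sequence parametrized proportionally to arc length on $[0,1]$, the curves are uniformly Lipschitz, so Arzel\`a--Ascoli delivers a uniformly convergent subsequence with limit $\gamma:[0,1]\to\Omega$, and lower semicontinuity of length yields $L(\gamma)=d$. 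Simplicity is then standard: any self-intersection could be spliced out to produce a strictly shorter competitor in $\Omega$.

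Next I would upgrade $\gamma$ to a geodesic. On any open subinterval where $\gamma$ takes values in the interior of $\Omega$, the usual first variation with compactly supported variations shows $\gamma$ satisfies the geodesic equation and is smooth. On any open subinterval where $\gamma$ takes values in $\partial\Omega$, the image lies on $\sigma$, which is already a geodesic of $(M,g)$; reparametrizing if necessary, this piece is a geodesic as well. Thus $\gamma$ is a broken geodesic, and the only possible breaks are transition points between interior and boundary pieces.

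The crucial step, and the main obstacle, is ruling out interior transition points, which will directly give both (i) and (ii). Suppose $0<s_0<1$, $\gamma(s_0)\in\partial\Omega$, and some subinterval $(s_0-\varepsilon,s_0)$ is mapped into the interior of $\Omega$. On this subinterval $\gamma$ is a geodesic of $M$, so the one-sided tangent $\gamma'(s_0^-)$ exists. If it is transverse to $\partial\Omega$, then $\gamma$ crosses into the non-compact component of $M\setminus\sigma$, violating $\gamma\subset\Omega$. If it is tangent to $\partial\Omega$, then $\gamma$ and $\sigma$ are two geodesics of $M$ sharing a point and a unit tangent vector there, so by uniqueness of geodesics they coincide near $s_0$; but $\gamma$ was assumed to lie in the interior of $\Omega$ just before $s_0$, a contradiction. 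Hence any interior contact with $\partial\Omega$ forces $\gamma$ to already lie on $\partial\Omega$ near that point. Consequently the set $\{s\in(0,1):\gamma(s)\in\operatorname{int}(\Omega)\}$ is either empty or all of $(0,1)$, which is precisely the dichotomy in (ii), and (i) follows because when one endpoint is interior this set is automatically nonempty at that endpoint and hence equal to all of $(0,1)$ (up to possibly the other endpoint).

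The only subtlety is that incompleteness of $(M,g)$ might \emph{a priori} be a concern for applying uniqueness of geodesics, but since $\Omega$ is compactly contained in $M$ this poses no difficulty: the geodesic $\sigma$ and the limit $\gamma$ both remain in $\Omega$, on which the exponential map is well-defined and injective on a uniform neighborhood of the zero section, so the uniqueness argument above applies verbatim.
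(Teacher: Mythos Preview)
Your overall outline is sound: compactness gives a minimizer, interior pieces are geodesics by first variation, and boundary pieces lie on $\sigma$, which is itself a geodesic. The tangent case at a transition point $s_0$ is handled correctly via uniqueness of geodesics. However, the transverse case has a genuine gap. You write that if $\gamma'(s_0^-)$ is transverse to $\partial\Omega$ then ``$\gamma$ crosses into the non-compact component of $M\setminus\sigma$''. This would follow if $\gamma$ continued as a geodesic of $M$ past $s_0$, but you have only established that $\gamma$ is a geodesic on $(s_0-\varepsilon,s_0)$; nothing prevents the \emph{constrained} minimizer from breaking at $s_0$ and either following $\partial\Omega$ or re-entering the interior with a different tangent $\gamma'(s_0^+)$. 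The constraint $\gamma\subset\Omega$ is precisely what could force such a corner, so the assertion that $\gamma$ exits $\Omega$ is unjustified. To rule out the transverse case you must first show that the minimizer is a smooth geodesic of $M$ in a full two-sided neighborhood of $s_0$.

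The paper supplies exactly this missing step. One takes a geodesically convex neighborhood $U$ of $\gamma(s_0)$ and considers the unique minimizing geodesic $\sigma_0$ in $U$ joining $\gamma(s_0-\varepsilon)$ to $\gamma(s_0+\varepsilon)$. Since $\partial\Omega\cap U$ is itself a geodesic arc and $\gamma(s_0-\varepsilon)$ lies in the interior, $\sigma_0$ can meet $\partial\Omega\cap U$ in at most one point (two distinct geodesics in a convex neighborhood meet at most once); hence $\sigma_0$ remains in $\Omega$ and is an admissible competitor. Minimality of $\gamma$ then forces $\gamma|_{[s_0-\varepsilon,s_0+\varepsilon]}=\sigma_0$, so $\gamma$ is a smooth geodesic of $M$ near $s_0$, and the tangency/uniqueness contradiction follows. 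For part~(ii) the paper takes a different route from your open--closed argument: it approximates $p\in\partial\Omega$ by interior points $p_k$, applies (i) to obtain minimizers $\gamma_k$, and passes to a smooth limit; the limiting geodesic is then either an arc of $\sigma$ or has interior in $\operatorname{int}(\Omega)$. Your direct approach to (ii) would also work once the transverse case is repaired with the convex-neighborhood argument.
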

	
	\begin{proof}
		Let $p, q\in \Omega$. Since the case $p=q$ is trivial, we assume $p\neq q$. Let 
		\begin{align*}
			d^\Omega (p, q) = \inf_\gamma L(\gamma),
		\end{align*}
		where $L(\gamma) = \int \sqrt{g(\dot\gamma, \dot\gamma)}$ is the length of $\gamma$ and the infimum is taken among all piecewise $C^1$ curves $\gamma : [0,1]\to \Omega$ so that $\gamma(0) =p$ and $\gamma(1) =q$. It is easy to check that $d^\Omega$ is a metric on $\Omega$. In particular, $d^\Omega (p, q) >0$. 
		
		Let $\gamma_j : [0,1]\to \Omega$ be a sequence of piecewise $C^1$ curves, parametrized proportional to arc length, joining $p$, $q$ so that $L(\gamma_j) \to d^\Omega(p, q)$ as $j\to \infty$. Since $\Omega$ is compact, by passing to a subsequence if necessary, we may assume that $(\gamma_j)$ converges uniformly in $d^\Omega$ to a Lipschitz continuous curve $\gamma: [0,1]\to \Omega$ as $j\to\infty$. If $\gamma(t_0) = q$ for some $t_0< 1$, we replace $\gamma$ by $\gamma |_{[0,t_0]}$. Hence we can assume $\gamma ^{-1} (\{q\}) = \{1\}$. 
		
		First we prove (i). Assume that $p \in \Omega \setminus \partial \Omega$. We claim that 
		\begin{align} \label{gamma stay inside Omega proof}\gamma(t) \notin \partial \Omega, \ \ \text{ for all }t<1. 
		\end{align}
		We argue by contradiction: if not, let $t_0$ be the infimum of the set $\gamma^{-1} (\partial \Omega\setminus\{q\})$. Since $p\notin \partial \Omega$, we have $t_0 >0$ and $\gamma([0,t_0))$ lies in the interior of $\Omega$. This together with the definition of $\gamma$ implies that $\gamma|_{[0,t_0]}$ is a geodesic. Let $U$ be a small geodesically convex neighborhood in $(M, g)$ centered at $\gamma (t_0)$ not containing $q$. Then there is $\epsilon>0$ such that $\gamma(t)$ lies in $U$ for all $t\in [t_0-\epsilon, t_0+\epsilon]$ and $\gamma(t_0\pm \epsilon) \neq \gamma(t_0)$. Let $\sigma_0$ be the shortest geodesic in $U$ connecting $\gamma (t_0 \pm \epsilon)$. Since $\partial \Omega \cap U$ is a portion of the geodesic $\sigma$ and $U$ is geodesically convex, $\sigma_0$ does not intersect with $\partial \Omega \cap U$ at more than one points. Hence the image of $\sigma_0$ except possibly at $\gamma(t_0+\epsilon)$ must lie in the connected component of $U\setminus \partial \Omega$ containing $\gamma(t_0 - \epsilon)$. As a result, the image of $\sigma_0$ also lies in $\Omega \cap U$. Since $\gamma$ is length minimizing, up to reparametrization we have $\gamma|_{[t_0-\epsilon, t_0 + \epsilon]} = \sigma_0$ and thus $\gamma|_{[t_0-\epsilon, t_0+ \epsilon]}$ is a smooth geodesic. Since $\gamma$, $\sigma$ are tangential at $\gamma(t_0)$ (recall that $\partial \Omega = \mathrm{Im}\sigma$) and both are geodesics, we have $\gamma = \sigma$ locally around $\gamma(t_0)$. This contradicts the choice of $t_0$ and thus (\ref{gamma stay inside Omega proof}) is shown. This immediately implies (i). 
		
		Next we prove (ii). Assume that $p, q\in \partial \Omega$. Let $(p_k)$ be a sequence of points in the interior of $\Omega$ converging to $p$. For each $k\in \mathbb N$, let $\gamma_k : [0,1]\to \Omega$ be a shortest geodesic in $\Omega$ joining $p_k$ to $q$ constructed in (i). By the smooth dependence of solutions to the geodesic equation and picking a subsequence if necessary, $(\gamma_k)$ converges smoothly to a geodesic $\gamma : [0,1]\to \Omega$ joining $p, q$. Using the triangle inequality
		\begin{align*}
			L(\gamma_k) = d^\Omega(p_k, q) \le d^\Omega (p_k, p) + d^\Omega(p, q)
		\end{align*}
		and taking $k\to \infty$, we have $L(\gamma) \le d^\Omega(p, q)$. Thus $\gamma$ is a length minimizing geodesic in $\Omega$. Then either $\gamma$ lies completely inside the interior of $\Omega$ away from the endpoints, or $\gamma$ touches $\partial \Omega$ at some point in $\partial\Omega\setminus \{p,q\}$, which implies that $\gamma$ lies completely in $\partial \Omega$ since both of them are geodesics. This finishes the proof of (ii). 
	\end{proof}
	
	We will need some definitions and notations from metric geometry. We use the reference \cite{BBI}. For the convenience of the reader, we summarize the basic facts that we shall need to prove Theorem \ref{upper bound on length of minimal geodesic in a ball}.
	
	A {\sl length space} $(X,d)$ is a metric space such that the metric $d$ can be obtained as a distance function associated to a length structure (see \cite{BBI} for a definition). The metric $d$ is called an {\sl intrinsic metric} in this case. If every pair of points $p,q$ in $X$ can be joined by a (possibly non-unique) shortest path, then the metric $d$ is called {\sl strictly intrinsic}. We recall that a shortest path is a curve $\gamma$ where the length $L(\gamma)$ is given by the distance between the endpoints of $\gamma$. A length space whose metric is strictly intrinsic is called a {\sl complete length space}.
	
	
	
	From Lemma \ref{shorest geo in Omega} we conclude 
	
	\begin{prop} \label{omega, d^omega is complete intrinsic metric space}
		$(\Omega, d^\Omega)$ is a complete length space. 
	\end{prop}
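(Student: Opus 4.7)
The statement has three substantive ingredients beyond $d^\Omega$ being a metric: that it arises from a length structure, that it is strictly intrinsic, and (following standard conventions in metric geometry) that $(\Omega, d^\Omega)$ is Cauchy complete. The first is essentially tautological from the definition of $d^\Omega$, the second is the existence assertion in Lemma \ref{shorest geo in Omega}, and the only genuinely non-formal step is Cauchy completeness.

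For the metric and length-structure properties, symmetry and triangle inequality are immediate from the infimum-over-curves definition, and positivity follows from $d_M|_\Omega \leq d^\Omega$, where $d_M$ denotes the Riemannian distance on $(M,g)$. Since $d^\Omega(p,q)$ is by construction the infimum of $g$-lengths of piecewise $C^1$ curves in $\Omega$ joining $p$ and $q$, it is precisely the distance associated to the length structure obtained by restricting piecewise $C^1$ curves in $(M,g)$ to those that stay in $\Omega$. The strictly intrinsic condition is then exactly the conclusion of Lemma \ref{shorest geo in Omega}.

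For Cauchy completeness, I would let $(p_k) \subset \Omega$ be $d^\Omega$-Cauchy. From $d_M|_\Omega \leq d^\Omega$ it is also $d_M$-Cauchy, and the compactness of $\Omega$ in $(M, d_M)$ gives some $p \in \Omega$ with $p_k \to p$ in $d_M$. To upgrade to $d^\Omega$-convergence, fix $\epsilon > 0$, choose $N$ with $d^\Omega(p_N, p_l) < \epsilon$ for $l \geq N$, and for each such $l$ invoke Lemma \ref{shorest geo in Omega} to obtain a shortest curve $\gamma_l : [0,1] \to \Omega$ from $p_N$ to $p_l$, parametrized proportionally to arc length with $L(\gamma_l) < \epsilon$. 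The $\gamma_l$ are then uniformly Lipschitz as maps into $(M, d_M)$ and take values in the compact set $\Omega$, so by Arzela-Ascoli a subsequence converges uniformly in $d_M$ to a Lipschitz curve $\gamma : [0,1] \to \Omega$ from $p_N$ to $p$. Lower semicontinuity of the $g$-length functional under uniform $d_M$-convergence then gives $L(\gamma) \leq \liminf_l L(\gamma_l) \leq \epsilon$, hence $d^\Omega(p_N, p) \leq L(\gamma) \leq \epsilon$. Since $\epsilon$ is arbitrary and $(p_k)$ is Cauchy, $p_k \to p$ in $d^\Omega$.

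The main obstacle is precisely this Arzela-Ascoli and lower-semicontinuity step, which implicitly relies on the compatibility between the ambient Riemannian topology on $\Omega$ and the $g$-length functional for Lipschitz curves with image in the precompact domain $\Omega$. These are standard facts about length structures of smooth Riemannian metrics, but they are the only non-formal ingredients beyond Lemma \ref{shorest geo in Omega}; the rest of the argument reduces to unpacking the definitions.
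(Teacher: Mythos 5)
Your proof is correct and rests on the same essential content as the paper's one-line argument: the length-space structure of $d^\Omega$ is definitional, and the strictly intrinsic property is precisely the existence statement of Lemma \ref{shorest geo in Omega}. Note that the paper explicitly defines a \emph{complete length space} to be a length space whose metric is strictly intrinsic, so your additional (correct, Arzel\`a--Ascoli based) verification of Cauchy completeness is not actually required under that convention, though it is a harmless strengthening.
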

	
	A {\sl triangle} $\Delta pqr$ in $(X,d)$ is a set of points $\{p,q,r\}$ together with three shortest paths $[pq], [qr], [rp]$. The length of a triangle is the sum of the lengths of its sides. For each $\kappa >0$ and for each triangle $\Delta pqr$ in $X$ with length $< 2\pi / \sqrt\kappa$, we can associate a unique (up to an isometry) comparison geodesic triangle $\Delta \bar{p}\bar{q}\bar{r}$ in $\mathbb S^2_{1/\sqrt\kappa}$ with vertices $\{\bar{p},\bar{q},\bar{r}\}$ such that the corresponding sides of the geodesic triangle $\Delta \bar{p}\bar{q}\bar{r}$ have the same lengths as the sides of the triangle $\Delta pqr$. Here $\mathbb S^2_{1/\sqrt\kappa}$ is the $2$-sphere with radius $1/\sqrt{\kappa}$. For a triangle $\Delta pqr $ in $X$, we denote the angles by $\angle p, \angle q, \angle r$, and if confusion arises we will e.g write $\angle pqr$ for the angle $\angle q$. We shall not need the definition of an angle between two shortest paths in a length space, but one can show that on a Riemannian manifold $M$, if $c_1$ and $c_2$ are two geodesics starting at $p = c_1(0) = c_2(0)$, then the angle $\angle p \in [0,\pi]$ between the shortest paths $c_1$ and $c_2$ is equal to the usual Riemannian angle between $c_1$ and $c_2$. See Corollary 1A.7 in \cite{bridson} for a proof. For the comparison triangle  $\Delta \bar{p}\bar{q}\bar{r}$ in $\mathbb S^2_{1/\sqrt\kappa}$ we denote the angles by $\angle \bar{p}, \angle \bar{q}, \angle \bar{r}$. 
	
	There are several definitions of a \textit{space of curvature} $\geq \kappa$. We shall use the following \textit{angle comparison} definition. 
	\begin{dfn}\label{def:space_of_curvature_k}
		Let $X$ be a complete length space, and let $\kappa > 0$. We say that $X$ is a \textit{space of curvature $\geq \kappa$} if for any point $x \in X$ there is a neighborhood $U_x$ of $x$ such that for all triangles $\Delta pqr \subset U_x$ the corresponding angles satisfy the inequalities
		\begin{align}\label{eq:angle_comparison}
			\angle p \geq \angle \bar{p}, \quad \angle q \geq \angle \bar{q}, \quad \angle r \geq \angle \bar{r},
		\end{align}
		for a comparison triangle $\Delta \bar{p}\bar{q}\bar{r}$ in $\mathbb S^2_{1/\sqrt\kappa}$. Furthermore, for any two shortest paths $[ab]$ and $[cd]$ in $X$ where $c$ is an interior point on $[ab]$, the following holds $\angle acd + \angle bcd = \pi$.
	\end{dfn}
	\begin{rem}
		The last statement in Definition \ref{def:space_of_curvature_k} above can be summarized as “the sum of adjacent angles equals $\pi$". It is needed to prove the equivalence to other definitions of spaces of curvature $\geq \kappa$. See \cite[Section 4.3]{BBI}.
	\end{rem}
	
	\begin{prop} \label{Omega domega has curvature ge kappa}
		The complete length space $(\Omega, d^\Omega)$ has curvature $\ge \kappa_g$, where $\kappa_g$ is the lower bound of the Gaussian curvature of $(M, g)$ in Theorem \ref{upper bound on length of minimal geodesic in a ball}. 
	\end{prop}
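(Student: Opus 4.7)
The plan is to verify the local angle-comparison criterion of Definition \ref{def:space_of_curvature_k} by reducing any sufficiently small triangle in $(\Omega, d^\Omega)$ to a genuine geodesic triangle of the ambient Riemannian surface $(M,g)$, and then invoking the standard local Toponogov comparison theorem for $(M,g)$, whose Gauss curvature satisfies $K_g \ge \kappa_g$ by hypothesis.

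The structural input is the observation, already packaged in Lemma \ref{shorest geo in Omega}, that every shortest path in $(\Omega, d^\Omega)$ is a smooth $(M,g)$-geodesic: cases (i) and (ii) of that lemma leave only two possibilities, namely a path whose interior lies in the interior of $\Omega$, which is an $(M,g)$-geodesic by the first variation formula, or a path lying inside $\partial \Omega = \operatorname{Im}(\sigma)$, which is an $(M,g)$-geodesic by the standing hypothesis on $\sigma$. For each $x \in \Omega$ I would then pick a geodesically convex ball $U_x \ni x$ in $(M,g)$ small enough that the local angle-comparison theorem holds on $U_x$, a standard consequence of Rauch comparison for a Riemannian surface with $K_g \ge \kappa_g$ (see e.g.\ \cite[Ch.~6]{BBI}). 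For any triangle $\Delta pqr \subset U_x$ in $(\Omega, d^\Omega)$, the previous step identifies its three sides with smooth $(M,g)$-geodesic segments, and at each vertex the metric angle in $(\Omega, d^\Omega)$ agrees with the Riemannian angle between the two adjacent sides: at an interior vertex this is Corollary 1A.7 in \cite{bridson}, while at a boundary vertex it follows from the same local convexity argument as in the proof of Lemma \ref{shorest geo in Omega}(ii), which forces the $(M,g)$-geodesic between two points of $\Omega$ near a common boundary point to remain inside $\Omega$, so that $d^\Omega$ and $d_g$ agree near the vertex. Thus $\Delta pqr$ is a genuine $(M,g)$-geodesic triangle sitting in the convex ball $U_x$, and the local Toponogov theorem delivers exactly the inequalities (\ref{eq:angle_comparison}) against the comparison triangle in $\mathbb S^2_{1/\sqrt{\kappa_g}}$. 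The supplementary-angle condition is immediate from the same identification of metric angles with Riemannian ones along smooth $(M,g)$-geodesics.

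The main obstacle is the delicate case where a vertex of $\Delta pqr$ lies on $\partial \Omega$ and an adjacent side either lies along, or is tangent to, $\partial \Omega$: then $\partial \Omega$ acts as a one-sided obstacle and one must rule out $d^\Omega$ exceeding $d_g$ on every neighborhood of the vertex. This is exactly what the geodesic property of $\partial\Omega$ buys us, since short $(M,g)$-geodesics between nearby points of $\Omega$ cannot cross the geodesic arc $\partial \Omega \cap U_x$ more than once inside the convex set $U_x$ and hence cannot escape $\Omega$. Should this identification prove too delicate to run directly, a clean alternative would be to double $\Omega$ across the totally geodesic boundary $\partial \Omega$ to obtain a $C^{1,1}$ Riemannian surface of Gauss curvature $\ge \kappa_g$ almost everywhere, observe that $(\Omega, d^\Omega)$ embeds isometrically as a convex subset of this double, and apply the classical Toponogov theorem on the closed surface instead.
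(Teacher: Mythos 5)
Your proposal is correct and follows essentially the same route as the paper: choose a geodesically convex neighborhood $V_x$ in $(M,g)$, use the fact that $\partial\Omega\cap V_x$ is a geodesic arc (so the ambient minimizing geodesic between two points of $\Omega\cap V_x$ cannot cross it twice and hence stays in $\Omega$) to identify small triangles of $(\Omega,d^\Omega)$ with genuine geodesic triangles of $(M,g)$, and then invoke the standard local comparison theorem for surfaces with $K_g\ge\kappa_g$ (Theorem 6.5.6 in \cite{BBI}). Your extra care about angles at boundary vertices and the doubling alternative are consistent refinements of the same argument rather than a different proof.
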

	
	\begin{proof}
		Let $x\in \Omega$ and let $V_x$ be a geodesically convex neighborhood in $(M, g)$ centered at $x$. When $x$ is in the interior of $\Omega$, we choose $V_x\subset \Omega$. Let $U_x = \Omega \cap V_x$. For any $p, q\in U_x$, let $\gamma$ be the unique shortest geodesic in $V_x$ joining $p$ and $q$. Note that $\gamma$ must lie inside $U_x$: this holds when $x$ is in the interior of $\Omega$, since $U_x = V_x$. When $x\in \partial \Omega$, if $\gamma$ is not in $U_x$, there are $t_1<t_2$ such that $\gamma |_{(t_1, t_2)}$ lies in $V_x \setminus U_x$ and $\gamma(t_1), \gamma(t_2)$ are in $\partial \Omega \cap V_x$. Since $\partial \Omega \cap V_x$ is also a geodesic passing through $\gamma(t_1), \gamma(t_2)$, this contradicts the fact that $V_x$ is a geodesically convex neighborhood. 
		
		In particular, triangles of $(\Omega, d^\Omega)$ in  $U_x$ are also triangles of $(M, g) $ in $V_x$. Together with the assumption that $(M, g)$ has Gaussian curvature $K\ge \kappa _g >0$, we can argue as in the case for Riemannian manifolds without boundary to show that $(\Omega, d^\Omega)$ has curvature $\geq \kappa_g$; see Theorem 6.5.6. in \cite{BBI}.
	\end{proof}
	
	We will denote the inequalities in \eqref{eq:angle_comparison} by \textit{the angle comparison condition}. If $X$ is a complete length space, then Toponogov's globalization theorem \cite[Theorem 10.3.1]{BBI} globalizes this local curvature condition to the entirety of $X$, not only in a neighborhood $U_x$ of each point $x$: 
	\begin{thm}[Globalization Theorem]\label{thm:globalization}
		Let $X$ be a complete length space of curvature $\geq \kappa$ for some $\kappa>0$. Then the angle comparison condition \eqref{eq:angle_comparison} is satisfied for any triangle $\Delta pqr$ in $X$ for which there is a unique (up to isometry) comparison triangle $\Delta \bar{p}\bar{q}\bar{r}$ in $\mathbb S^2_{1/\sqrt\kappa}$.
		
	\end{thm}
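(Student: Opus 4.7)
The plan is to prove the Toponogov globalization theorem by reducing global comparison to a chain of local comparisons assembled via Alexandrov's lemma on gluing spherical triangles. First I would pass to the equivalent \emph{distance comparison} form of the local curvature condition: in any neighborhood where local angle comparison holds, one also has, for every triangle $\Delta pqr$ and every $m\in[qr]$, the inequality $d(p,m)\ge d(\bar p,\bar m)$, with $\bar m\in[\bar q\bar r]$ at distance $d(q,m)$ from $\bar q$; the equivalence with angle comparison follows pointwise from the spherical law of cosines. It is the distance form that aggregates well under subdivision, so this reformulation is the natural target of the induction.

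The main combinatorial ingredient is Alexandrov's lemma in $\mathbb S^2_{1/\sqrt\kappa}$: given two triangles $\Delta pqm$ and $\Delta pmr$ on opposite sides of the shared edge $[pm]$ with $\angle pmq+\angle pmr\le \pi$, the \emph{straightened} triangle $\Delta \tilde p\tilde q\tilde r$ with $|\tilde p\tilde q|=|pq|$, $|\tilde p\tilde r|=|pr|$ and $|\tilde q\tilde r|=|qm|+|mr|$ satisfies $d(\tilde p,\tilde m)\le d(p,m)$ for $\tilde m\in[\tilde q\tilde r]$ at distance $|qm|$ from $\tilde q$. This is a purely spherical-trigonometric statement and can be checked by direct calculation, provided all perimeters involved stay strictly below $2\pi/\sqrt\kappa$ so that the relevant spherical triangles are genuinely determined by their side lengths.

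With these two ingredients in place, given a triangle $\Delta pqr$ in $X$ whose comparison triangle exists and is unique, fix any $m\in[qr]$ and subdivide $[qr]$ by points $q=m_0,m_1,\dots,m_k=r$ (with $m$ among them) so that each sub-triangle $\Delta pm_{i-1}m_i$ lies in a neighborhood $U_{x_i}$ from Definition \ref{def:space_of_curvature_k}; compactness of $[qr]$ combined with the local neighborhoods provided by the curvature condition makes this possible. Each sub-comparison-triangle $\bar\Delta_i$ in $\mathbb S^2_{1/\sqrt\kappa}$ realizes the three side lengths exactly, and at each interior vertex $\bar m_i$ the two adjacent hinge angles sum to at most $\pi$: the corresponding $X$-angles sum exactly to $\pi$ by the last clause of Definition \ref{def:space_of_curvature_k}, and each of them dominates its model counterpart by local angle comparison. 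Alexandrov's lemma then glues $\bar\Delta_i$ and $\bar\Delta_{i+1}$ into a straightened comparison triangle of base length $|m_{i-1}m_i|+|m_im_{i+1}|=|m_{i-1}m_{i+1}|$ while preserving the inequality $d(\bar p,\bar m_i)\le d(p,m_i)$. Iterating this gluing $k-1$ times yields the comparison triangle $\Delta\bar p\bar q\bar r$ of the original $\Delta pqr$ together with the distance inequality $d(\bar p,\bar m)\le d(p,m)$; the equivalence of paragraph one then delivers the angle inequality $\angle p\ge \angle\bar p$, and permuting the roles of the vertices yields the comparison at $\bar q$ and $\bar r$ as well.

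The main obstacle is ensuring that the perimeter bound $<2\pi/\sqrt\kappa$ is preserved throughout every intermediate straightening, since otherwise the required spherical comparison triangle fails to exist and Alexandrov's lemma cannot be applied. A short induction handles this: the perimeter of each straightened $\bar\Delta_{i,i+1}$ equals the perimeter of the corresponding triangle $\Delta pm_{i-1}m_{i+1}$ in $X$, because $m_i$ lies on the geodesic $[m_{i-1}m_{i+1}]$ and so $|m_{i-1}m_{i+1}|=|m_{i-1}m_i|+|m_im_{i+1}|$; this perimeter is in turn bounded above by that of $\Delta pqr$, which by hypothesis is strictly less than $2\pi/\sqrt\kappa$. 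Consequently every intermediate comparison triangle is well defined and unique up to isometry, the gluing can be iterated to completion, and the argument closes.
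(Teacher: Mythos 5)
The paper does not actually prove this statement: it is quoted as Toponogov's globalization theorem with a citation to \cite[Theorem 10.3.1]{BBI}, so there is no internal proof to compare against. Your attempt to supply one from scratch has a genuine gap at its central step. You propose to subdivide the side $[qr]$ by points $q=m_0,m_1,\dots,m_k=r$ ``so that each sub-triangle $\Delta p m_{i-1}m_i$ lies in a neighborhood $U_{x_i}$ from Definition \ref{def:space_of_curvature_k},'' invoking compactness of $[qr]$. This cannot be arranged: each sub-triangle contains the two shortest paths $[pm_{i-1}]$ and $[pm_i]$ joining the fixed, possibly distant apex $p$ to the base, so refining the partition of $[qr]$ never makes the sub-triangles small --- they all have two long sides. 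The local curvature hypothesis only gives angle (equivalently, distance) comparison for triangles contained entirely in some $U_x$, so it provides no information about $\Delta p m_{i-1}m_i$, and the Alexandrov gluing step then has nothing to aggregate. This is exactly why globalization is a nontrivial theorem rather than a routine patching argument; the naive ``chop one side into small arcs'' proof is a well-known false start.

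The tools you name --- the distance-comparison reformulation and Alexandrov's lemma for straightening two spherical triangles glued along an edge with adjacent angles summing to at most $\pi$ --- are indeed the right ones, but in the actual proof (see \cite[Section 10.3]{BBI}) they enter inside an induction on scale: one shows that if the comparison inequality holds for all triangles of perimeter less than $\ell$ contained in suitable balls, then it holds for triangles of perimeter up to roughly $\tfrac{3}{2}\ell$, by an iterative limiting construction that produces a sequence of genuinely shrinking triangles, together with a separate treatment of thin triangles. Your final paragraph on preserving the perimeter bound $<2\pi/\sqrt{\kappa}$ is fine as far as it goes, but it presupposes the subdivision step that fails. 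To repair the argument you would need to reproduce the scale-induction mechanism, not merely a single pass of Alexandrov's lemma along a partition of one side.
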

	
	Let us remind that when $\kappa >0$, a comparison triangle only exists if the length of $\Delta pqr$ does not exceed $2\pi/\sqrt\kappa$. As mentioned earlier, we can associate a unique comparison triangle when the length of  $\Delta pqr$ is strictly less than $2\pi/\sqrt{\kappa}$. If the length is equal to $2\pi/\sqrt\kappa$, then we have two situations:
	\begin{itemize}
		\item All the sides have length strictly less than $\pi / \sqrt{\kappa}$. Then a comparison triangle is a unique great circle, i.e. all of its angles are equal to $\pi$.
		\item One of the sides has length equal to  $\pi / \sqrt{\kappa}$, say $[pq]$. The sum of lengths of the two other sides $[qr]$ and $[rp]$ is then equal to  $\pi / \sqrt{\kappa}$. In this case there does not exist a unique comparison triangle, but we can fix the comparison triangle $\Delta \bar{p}\bar{q}\bar{r}$ where the side $[\bar{p}\bar{q}]$ passes through the point $\bar{r}$.
	\end{itemize}
	Thus, with this convention, the conclusion of Theorem \ref{thm:globalization} holds for every triangle  $\Delta pqr$ in $X$ with length $\leq 2\pi/\sqrt{\kappa}$. In fact, as a corollary of theorem \ref{thm:globalization} one can show that any triangle in $X$ has length no greater than $2\pi/\sqrt{\kappa}$, where $X$ is a space of curvature $\geq \kappa$ for some $\kappa >0$. See \cite[Corollary 10.4.2.]{BBI}. We shall use this fact in the proof of Theorem \ref{upper bound on length of minimal geodesic in a ball} below.
	
	Before proving Theorem \ref{upper bound on length of minimal geodesic in a ball}, we recall the following elementary lemma in spherical geometry. We recall that a {\sl convex polygon} in the sphere is a polygon so that the interior angle at each vertex is less than or equal to $\pi$.
	\begin{lem}\label{lem:spherical_polygon}
		Let $P$ be a convex $n$-gon in $\mathbb S^2_{1/\sqrt{\kappa}}$, and denote the length of $P$ by $|P|$. Then $|P|\leq 2\pi/\sqrt{\kappa}$.
	\end{lem}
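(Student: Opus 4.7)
My plan is to use the spherical polar dual construction combined with the Gauss--Bonnet theorem. Set $R=1/\sqrt\kappa$ and label the vertices of $P$ cyclically as $v_1,\ldots,v_n$, with edges $e_i=[v_i,v_{i+1}]$ of length $\ell_i$, and interior angles $\alpha_i\le\pi$ at $v_i$.

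First I would construct the polar polygon $P^*\subset\mathbb{S}^2_R$: for each edge $e_i$, let $G_i$ be the great circle containing $e_i$, and let $w_i\in\mathbb{S}^2_R$ be the pole of $G_i$ on the side opposite the interior of $P$. The vertices of $P^*$ are $w_1,\ldots,w_n$ in cyclic order. Because $w_i$ and $w_{i+1}$ both lie at spherical distance $\pi R/2$ from the shared vertex $v_{i+1}\in G_i\cap G_{i+1}$, a direct computation in spherical trigonometry gives the duality formulas: the edge $[w_iw_{i+1}]$ of $P^*$ has length $R(\pi-\alpha_{i+1})$, and the interior angle of $P^*$ at $w_i$ equals $\pi-\ell_i/R$. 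In particular, $\alpha_i\le\pi$ keeps all side lengths of $P^*$ nonnegative.

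Next I would apply the spherical Gauss--Bonnet formula on $\mathbb{S}^2_R$, which has constant Gauss curvature $\kappa$, to the polygon $P^*$, obtaining
\[
\sum_{i=1}^n\Bigl(\pi-\frac{\ell_i}{R}\Bigr)=(n-2)\pi+\kappa\,A(P^*).
\]
Solving for the perimeter of $P$ then gives
\[
|P|=\sum_{i=1}^n\ell_i=2\pi R-\frac{A(P^*)}{R}\le 2\pi R=\frac{2\pi}{\sqrt\kappa},
\]
since $A(P^*)\ge 0$, which is the desired conclusion (with equality exactly when $P^*$ degenerates to a single great circle arc).

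The main obstacle I anticipate is verifying that $P^*$ really is a bona fide simple spherical polygon, with the $w_i$ listed in the correct cyclic order and bounding a region of well-defined nonnegative area, so that Gauss--Bonnet applies cleanly. This should follow from the fact that any convex spherical polygon lies in a closed hemisphere --- namely, $P$ is the intersection of the closed hemispheres bounded by the $G_i$ on its interior side --- which in turn forces $P^*$ to inherit analogous convexity and to be contained in a complementary closed hemisphere dual to $P$. The duality formulas themselves are routine spherical trigonometry once this nondegeneracy is in hand.
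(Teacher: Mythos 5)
The paper does not actually prove this lemma---it is stated as a recalled elementary fact of spherical geometry with no argument given---so there is no in-paper proof to match against; your job was to supply one, and what you supply is the classical polar-duality argument, which is correct in substance. The duality formulas (dual edge lengths $R(\pi-\alpha_{i+1})$, dual angles $\pi-\ell_i/R$), the Gauss--Bonnet computation on $\mathbb S^2_{1/\sqrt\kappa}$, and the conclusion $|P|=2\pi R-A(P^*)/R\le 2\pi R$ are all right. You correctly identify the only point of substance: that $P^*$ is a genuine simple spherical polygon bounding a region of nonnegative area. Your proposed resolution (a convex spherical polygon is the intersection of the closed hemispheres determined by its edges, hence lies in a closed hemisphere, hence has a well-defined convex polar) is the standard one, but note that it is not purely ``routine'' if convexity is taken to mean only that the interior angles are $\le\pi$, which is the notion the paper actually uses when it declares $\mathcal O_n$ convex: passing from local convexity of a simple closed geodesic polygon to the global statement that it is an intersection of hemispheres requires a (short, standard, but genuine) additional argument, and one also needs each $\ell_i\le\pi R$ for the dual angles to be nonnegative --- automatic here since the sides of $\mathcal O_n$ are sides of comparison triangles. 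If you want to avoid the duality machinery altogether, there is a more elementary route: induct on $n$ by cutting off a vertex with angle $<\pi$ along the minimizing geodesic joining its two neighbours; the spherical triangle inequality shows the perimeter does not decrease under this operation and the new polygon is still convex, reducing the claim to the case of a spherical triangle, whose perimeter is classically at most $2\pi R$. Either route is fine; yours has the advantage of also identifying the equality case.
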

	
	
	
	\begin{proof} [Proof of Theorem \ref{upper bound on length of minimal geodesic in a ball}] The idea of this proof is similar to the proof presented in \cite[Theorem 3.4.10]{Klingenberg}. Let $\Omega$ be the compact domain bounded by the geodesic $\sigma: [0,1] \to M$ as defined in Definition \ref{dfn of sigma and omega}, so that $\sigma(0)=\sigma(1)$. By Proposition \ref{omega, d^omega is complete intrinsic metric space} and Proposition \ref{Omega domega has curvature ge kappa}, $(\Omega, d^\Omega)$ is a complete length space with curvature $\geq \kappa_g$. 
		
		Let $L_0 := L(\sigma)$ be the length of $\sigma$. Take numbers $t_1,t_2,t_3,t_4 \in I$, where $t_1=0, t_4=1$, such that each of the subarcs $\sigma|_{[t_1,t_2]}, \sigma|_{[t_2,t_3]}$ and $\sigma|_{[t_3,t_4]}$  has length $L_0/3$. Let $\Delta \sigma(t_1)\sigma(t_2)\sigma(t_3)$ be a triangle in $(\Omega, d^\Omega)$. By Proposition \ref{omega, d^omega is complete intrinsic metric space} each of the sides of this triangle either lies completely in $\partial \Omega$ or has interior contained in the interior of $\Omega$. Using \cite[Corollary 10.4.2.]{BBI} we deduce that the length of this triangle is not greater than $2\pi/\sqrt{\kappa_g}$. If the sides of the triangle $\Delta \sigma(t_1)\sigma(t_2)\sigma(t_3)$ coincide with the images of the subarcs $\sigma|_{[t_1,t_2]}, \sigma|_{[t_2,t_3]}$ and $\sigma|_{[t_3,t_4]}$, then we are done. 
		
		If not, then build new triangles 
		\begin{itemize}
			\item $\Delta \sigma(t_1)\sigma(t_{1,2})\sigma(t_2)$, 
			\item $\Delta \sigma(t_2)\sigma(t_{2,3})\sigma(t_3)$ and 
			\item $\Delta \sigma(t_3)\sigma(t_{3,1})\sigma(t_1)$,
		\end{itemize}
		where $t_{i,k} \in (t_i,t_k)$ are chosen such that $\sigma(t_{i,k})$ define midpoints of the corresponding subarcs. To each one of these triangles, we associate a unique comparison triangle in $\mathbb S^2_{1/\sqrt\kappa_g}$. We put the triangles together along their common sides and obtain a comparison $6$-gon $\mathcal{O}_6$ in $\mathbb S^2_{1/\sqrt\kappa_g}$. From Theorem \ref{thm:globalization} we know that the angles in the comparison triangles are not greater than the corresponding angles in $\Omega$. This implies that the angles of the vertices in $\mathcal{O}_6$ are not bigger than $\pi$. Hence $\mathcal{O}_6$ is convex. By Lemma \ref{lem:spherical_polygon}, the length of $\mathcal{O}_6$ satisfies $|\mathcal{O}_6|\leq 2\pi/\sqrt{\kappa_g}$. If the sides of the constructed 6-gon in $\Omega$ coincide with the arcs of $\sigma$, then we are done. 
		
		If not, then we continue to construct more triangles as above, and build the corresponding comparison $n$-gons $\mathcal{O}_n$ for increasingly large $n\in \mathbb{N}$. By Lemma \ref{lem:spherical_polygon} again, $|\mathcal{O}_n|\leq 2\pi/\sqrt{\kappa_g}$. For $n$ large enough, the arcs of $\sigma$ will be the unique shortest paths between the vertices of the constructed $n$-gon on $\Omega$. This follows from the fact that shortest paths in  $(\Omega, d^\Omega)$ are geodesics in $(M,g)$, and that each side of the $n$-gon will be contained in a geodesically convex neighborhood in $M$. In such a neighborhood, every two points are connected by a unique shortest path. This implies the desired bound since for $n$ large enough we have $L(\sigma)= |\mathcal O_n| \leq 2\pi /\sqrt{\kappa_g}$.

	\end{proof}

	\section{Compactness and Finiteness of Embedded\\ Self-Shrinkers With Rotational Symmetry} \label{section smooth compactness}
	In this section we prove Theorem \ref{smooth compactness}, \ref{finitely many symmetric one} and some other related results. We start with the following well known lemma.
	\begin{lem} \label{all self-shrinkers intersect}
		Let $\Sigma_1$, $\Sigma_2$ be two properly embedded self-shrinkers in $\mathbb R^{n+1}$ such that one of them is compact. Then $\Sigma_1$ and $\Sigma_2$ must intersect.
	\end{lem}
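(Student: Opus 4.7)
The plan is to argue by contradiction using the mean curvature flows associated to the two self-shrinkers together with the parabolic avoidance (comparison) principle. So I would suppose, for contradiction, that $\Sigma_1\cap\Sigma_2=\emptyset$, with $\Sigma_1$ compact. Since $\Sigma_1$ is compact and $\Sigma_2$ is properly embedded, hence closed in $\mathbb{R}^{n+1}$, the distance $d_0 := d(\Sigma_1,\Sigma_2)>0$ is attained, so it is strictly positive.

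Next I would exploit the homothetic nature of self-shrinkers under MCF. Both $\Sigma_1$ and $\Sigma_2$ give rise to MCF solutions
\begin{align*}
\Sigma_i(t) = \sqrt{-t}\,\Sigma_i, \qquad t\in(-\infty,0),
\end{align*}
as recalled in the introduction of the paper. At every time $t<0$ one has the exact scaling identity
\begin{align*}
d\bigl(\Sigma_1(t),\Sigma_2(t)\bigr)=\sqrt{-t}\,\, d_0,
\end{align*}
so this distance is a strictly decreasing function of $t$ on $(-\infty,0)$ tending to $0$ as $t\to 0^-$.

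The last ingredient is the parabolic avoidance principle for MCF: if two smooth MCF solutions are disjoint at some initial time, and one of them is compact, then they remain disjoint and the distance between them is non-decreasing in time for as long as both flows exist smoothly. Applying this with initial time $t=-1$ (where $\Sigma_i(-1)=\Sigma_i$) yields that $t\mapsto d(\Sigma_1(t),\Sigma_2(t))$ is non-decreasing on $(-1,0)$, which directly contradicts the strictly decreasing behaviour $\sqrt{-t}\,d_0$ obtained above. Hence $\Sigma_1\cap\Sigma_2\neq\emptyset$.

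The only subtle point, and the place one has to be careful, is the invocation of the avoidance principle in the case where $\Sigma_2$ is non-compact: the standard statement requires at least one of the evolving hypersurfaces to be compact, which is exactly our hypothesis on $\Sigma_1$, so no modification is needed. All other steps are direct consequences of the shrinker scaling and the definition of distance.
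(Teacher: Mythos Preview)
Your argument is correct and is the standard avoidance-principle proof of this fact. Note that the paper does not actually supply a proof of this lemma: it calls it ``well known'' and cites \cite{WW} for the compact--compact case and \cite{IPR}, \cite{CCMS} for more general versions, so there is no paper-internal proof to compare against. Your write-up is a complete justification in the compact-versus-proper setting stated in the lemma.
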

	
	When both self-shrinkers are compact, this is proved in \cite[Theorem 7.4]{WW}. Recently, it was proved in \cite{IPR} that any two properly embedded self-shrinkers that are sufficiently separated at infinity must intersect. See also \cite[Corollary C.4]{ CCMS} for the statement for $F$-stationary varifolds. 
	
	
	Next we restrict attention to complete embedded rotationally symmetric self-shrinkers in $\mathbb R^{n+1}$. By Theorem \ref{partial classification of KM}, there are only two types of non-compact examples - cylinders and hyperplanes through the origin, and all such pairs also intersect. Thus we have 
	
	\begin{lem} \label{all tori intersect}
		Any two complete embedded rotationally symmetric self-shrinkers in $\mathbb R^{n+1}$ intersect.
	\end{lem}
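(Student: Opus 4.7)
The plan is to combine the classification theorem with the existing intersection result for compact shrinkers, and then handle the remaining non-compact pairings by elementary linear algebra.

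First, I would apply Theorem \ref{partial classification of KM} to each of the two shrinkers $\Sigma_1, \Sigma_2$, producing four possibilities for each: the hyperplane $\mathcal P$ through the origin, the round sphere $\mathcal S$, a round cylinder $\mathcal C$ of radius $\sqrt{2(n-1)}$ whose axis passes through the origin, or an embedded doughnut diffeomorphic to $\mathbb S^1 \times \mathbb S^{n-1}$. If at least one of $\Sigma_1, \Sigma_2$ is compact (i.e. a sphere or a doughnut), then Lemma \ref{all self-shrinkers intersect} immediately yields $\Sigma_1 \cap \Sigma_2 \neq \emptyset$ and the proof is finished.

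It remains to treat the case in which both are non-compact, so each is either a hyperplane $\{x : x \cdot v = 0\}$ through the origin (for some unit vector $v$) or a cylinder $\{x : |x - (x \cdot u)u| = \sqrt{2(n-1)}\}$ with axis $\mathbb R u$ through the origin (note that the paper's definition of rotational symmetry forces the axis to pass through the origin). I would then split into three subcases. For two hyperplanes, the origin itself lies in both. For two cylinders with unit axis vectors $u_1, u_2$: if $u_1 = \pm u_2$ the cylinders coincide, and otherwise $u_1^\perp \cap u_2^\perp$ has dimension $n - 1 \geq 1$, so any vector in it of Euclidean norm $\sqrt{2(n-1)}$ lies on both cylinders, since such a vector is orthogonal to each axis. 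For a hyperplane $v^\perp$ and a cylinder with axis $\mathbb R u$: if $u = \pm v$ the intersection is a round $(n-1)$-sphere of radius $\sqrt{2(n-1)}$ in $v^\perp$, and otherwise $v^\perp \cap u^\perp$ again has dimension at least $n - 1 \geq 1$, and any of its vectors of norm $\sqrt{2(n-1)}$ lies in both.

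The only mild conceptual point to watch is that $\Sigma_1$ and $\Sigma_2$ may have distinct axes of rotational symmetry, so the non-compact case is not a one-variable profile-curve problem. However, because every such axis is forced to pass through the origin and $n \geq 2$, the relevant orthogonal complements always have positive dimension, so the case analysis is routine rather than the main obstacle.
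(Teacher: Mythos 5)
Your proposal is correct and follows essentially the same route as the paper: reduce to the non-compact case via Lemma \ref{all self-shrinkers intersect} and the classification in Theorem \ref{partial classification of KM}, then observe that any pair drawn from hyperplanes and cylinders through the origin must intersect. The paper simply asserts this last fact, whereas you supply the (correct) elementary linear-algebra verification, using that $n\ge 2$ so the relevant orthogonal complements are nontrivial.
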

	
	Now we prove Theorem \ref{smooth compactness}.
	
	\begin{proof} [Proof of Theorem \ref{smooth compactness}] Let $(\Sigma_k)$ be a sequence of complete embedded rotationally symmetric self-shrinkers in $\mathbb R^{n+1}$ each with axis of rotation $\ell_k$. After taking a subsequence, a limit axis exists, to which each $\Sigma_k$ can be rotated, thus it is enough to consider the case where all axes of rotation are identical. By a further rotation, we assume the limit axis is $\ell =\mathbb{R} e_1\subseteq\mathbb{R}^{n+1}$.
		
		By Theorem \ref{partial classification of KM}, it suffices to assume that $\Sigma_k$ is a self-shrinking doughnut for each $k$. Thus $\Sigma_ k =\Sigma_{\sigma_k}$, where 
		$$\sigma _k: [-d_k, d_k] \to \mathbb H$$ 
		is a unit speed geodesic in $(\mathbb H, g_A)$ so that $L(\sigma_k)= 2d_k$ and $\sigma_k(-d_k) =\sigma_k(d_k)$. Since all self-shrinkers have entropy larger than or equal to $\lambda(\mathbb R^n) =  1$, we have $d_k \ge d>0$ for all $k$ by (\ref{relation between entropy and length}) for some dimensional constant $d$.
		
		Let $\sigma_a$ be the profile curve of the Angenent doughnut constructed in \cite{Ang}. By Lemma \ref{all tori intersect}, each $\sigma_k$ intersects $\sigma_a$. Reparametrizing each $\sigma_k$ if necessary, we assume that $\sigma_k (0) \in \operatorname{Im} \sigma_a$ for each $k$. Taking a subsequence if necessary, since $\operatorname{Im}\sigma_a$ is compact, we have
		\begin{align} \label{ sigma_n converges at a point}
			\sigma_k (0) \to p, \ \ \ \sigma_k'(0)\to v
		\end{align}
		as $k\to \infty$. Note that $\|v\|=1$ since each $\sigma_k$ is of unit speed. Let $\sigma_\infty : I \to \mathbb H$ be the maximally defined geodesic in $(\mathbb H, g_A)$ with $\sigma_\infty(0) = p$, $\sigma_\infty'(0) = v$, where $I$ is an open interval.
		
		For any $R>1$, let 
		$$K_R =[-R, R] \times [R^{-1}, R]  \subset \mathbb H.$$ 
		Then there is $R_0 >1$ so that $\operatorname{Im}(\sigma_a) \subset K_R$ for all $R \ge R_0$. For each $k\in \mathbb N$ and $R\ge R_0$, let $I_{k, R}$ be the connected component of $\sigma_k^{-1} (K_R)$ in $\mathbb [-d_k, d_k]$ containing $0$. Since each $\sigma_k$ is a geodesic in $(\mathbb H, g_A)$, by (\ref{ sigma_n converges at a point}) and the smooth dependence on initial data of the ODE within each $K_R$, $(\sigma_k|_{I_{k,R}})$ converges smoothly to $\sigma_{\infty}|_{I_R}$ in $K_R$, where $I_R$ is the connected component of $\sigma_\infty^{-1}(K_R)$ containing $0$.
		
		Since each $\sigma_k|_{I_{k, R}}$ is embedded and $(\sigma_k|_{I_{k, R}})$ converges smoothly to $\sigma_\infty |_{I_R}$, it is clear that $\sigma_\infty$ does not admit transverse self-intersection. Thus if $\sigma_\infty  (s) = \sigma_\infty (t)$ for some $s\neq t$, then $\sigma_\infty'(s) = \pm \sigma_{\infty}' (t)$ and this implies that $\sigma_\infty$ is periodic. In this case, $\sigma_\infty$ is a simple closed geodesic in $(\mathbb H, g_A)$ and $(\sigma_k)$ converges smoothly to $\sigma_\infty$. Thus we are done. 
		
		From now on we may therefore assume that $\sigma_\infty$ is not a closed geodesic. Since $(\sigma_k|_{I_{k, R}})$ converges smoothly to $\sigma_\infty$ in each $K_R$ and $\cup_{R\ge R_0} K_R = \mathbb H$, $\sigma_\infty$ is properly immersed. Since $\sigma_\infty$ is injective, $\Sigma_{\sigma_\infty}$ is a complete embedded rotationally symmetric self-shrinker in $\mathbb R^{n+1}$. By Theorem \ref{partial classification of KM}, $\Sigma_{\sigma_\infty}$ is either the plane $\mathcal P$, the sphere $\mathcal S$ or the cylinder $\mathcal C$. 
		
		First we assume that $\Sigma_{\sigma_\infty}$ is the sphere $\mathcal S$ and we will derive a contradiction. After that, we consider the hyperplane $\mathcal P$ and the cylinder $\mathcal C$ and point  out the necessary changes for the contradiction argument. 
		
		We split the argument into several lemmas.
		
		\begin{lem} \label{claim 1} There is $R_1 \ge R_0$ so that the following holds: for all $R \ge R_1$, there is $k_1 = k_1(R)$ so that for all $k\ge k_1$, $\operatorname{Im}\sigma_k\cap K_R$ contains a connected component different from $\sigma_k (I_{k,R})$ which intersects $K_{R_1}$. 
		\end{lem}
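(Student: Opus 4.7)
The plan is to argue by contradiction. Suppose the conclusion fails, so that for every $R_1 \ge R_0$ there exist arbitrarily large $R \ge R_1$ and indices $k$ for which the ``complementary arc''
\[
\tau_k \,:=\, \sigma_k\big|_{[-d_k,\,d_k]\setminus I_{k,R}}
\]
has no point in $K_{R_1}$. The contradiction will come from comparing a uniform positive lower bound on $L_A(\tau_k)$ (coming from an entropy gap between the doughnuts $\Sigma_k$ and the limit sphere $\mathcal S$) against an upper bound $L_A(\tau_k) \to 0$ coming from the blow-up of the Gauss curvature of $g_A$ outside any fixed $K_{R_1}$.

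First I would establish the length lower bound. By the entropy inequality of Hershkovits--White \cite{HW} for self-shrinking doughnuts, $\lambda(\Sigma_k) \ge \lambda(\mathbb S^1) = \sqrt{2\pi/e}$, while classical results (\cite{CIMW}, \cite{BW}) yield $\lambda(\mathcal S) = \lambda(\mathbb S^n) < \lambda(\mathbb S^1)$ for all $n \ge 2$. Via (\ref{relation between entropy and length}) this produces a uniform positive gap
\[
L_A(\sigma_k) - L_A(\sigma_\infty) \;\ge\; (4\pi)^{n/2}\omega_{n-1}^{-1}\bigl(\lambda(\mathbb S^1) - \lambda(\mathcal S)\bigr) \,=:\, 2\delta_0 \;>\; 0.
\]
Combining this with $L_A(\sigma_k|_{I_{k,R}}) \to L_A(\sigma_\infty|_{I_R}) \to L_A(\sigma_\infty)$ (first $k \to \infty$, then $R \to \infty$), which follows from the smooth subconvergence on compact subsets of $\mathbb H$, then forces $L_A(\tau_k) \ge \delta_0$ for all sufficiently large $R$ and $k$.

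Next I would localize $\tau_k$. Smooth convergence to the half-circle profile $\sigma_\infty$ of radius $\sqrt{2n}$ forces, for $R$ large, the endpoints of $\tau_k$ to lie near $(\pm\sqrt{2n},\,R^{-1})$ on the bottom edge of $\partial K_R$. Since $\tau_k$ misses $K_{R_1}$ by assumption, it is trapped in $\mathbb H \setminus K_{R_1}$. In the regions $\{r > R_1\} \cup \{|x| > R_1\}$ the Angenent weight $\phi = r^{n-1} e^{-(x^2+r^2)/4}$ decays at least like $e^{-R_1^2/4}$, so together with the entropy upper bound $L_A(\sigma_k) \le (4\pi)^{n/2}\omega_{n-1}^{-1} E_n$ from Theorem \ref{upper bound on length}, the contribution to $L_A(\tau_k)$ coming from those regions is $o(1)$ as $R_1 \to \infty$. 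Hence the bulk of $\tau_k$ lies in the ``low strip'' $S_{R_1} := \{(x,r) : |x| \le R_1,\ r \le R_1^{-1}\}$, where the Gauss curvature of $g_A$ satisfies $K_{g_A} \ge (n-1)/r^{2n} \ge (n-1)R_1^{2n}$.

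To close the contradiction, I would adjoin a short horizontal segment on $\{r = R^{-1}\}$ between the endpoints of $\tau_k$, producing a simple piecewise-geodesic loop bounding a simply-connected domain inside $S_{R_1}$. A Gauss--Bonnet computation on this domain, in the spirit of Theorem \ref{upper bound on length of minimal geodesic in a ball}, combined with an isoperimetric inequality of Bol--Schmidt type for domains in surfaces of curvature $\ge \kappa$, should produce an upper bound of the form
\[
L_A(\tau_k) \;\le\; \frac{2\pi}{\sqrt{n-1}\,R_1^{\,n}} \,+\, o(1),
\]
which is smaller than $\delta_0$ once $R_1$ is taken sufficiently large, contradicting the lower bound of Step~2 and thereby proving the claim. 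The main obstacle is precisely this last step: Theorem \ref{upper bound on length of minimal geodesic in a ball} as stated treats simple \emph{closed} geodesics, whereas $\tau_k$ is only a simple geodesic arc with boundary, so one must carefully account for the (large and negative) geodesic curvature $\kappa_g \sim -(n-1)R$ of the adjoined horizontal segment at height $r = R^{-1}$ and the corner angles at the endpoints of $\tau_k$, and verify that these Gauss--Bonnet boundary terms do not swamp the enormous interior contribution $\int K_{g_A}\,dA$ produced by the large positive curvature in $S_{R_1}$.
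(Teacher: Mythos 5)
Your strategy is genuinely different from the paper's, and as written it has two real gaps. The paper proves Lemma \ref{claim 1} with a topological/barrier argument: assuming $\operatorname{Im}\sigma_k\cap K_{R_1}=\sigma_k(I_{k,R})\cap K_{R_1}$, the Jordan curve theorem splits $\mathbb H\setminus\operatorname{Im}\sigma_k$ into a compact and a non-compact component, and in either case one places a scaled, horizontally translated Angenent doughnut ($\tilde\sigma_a$ in $K_{R_1}^\delta$, or $s\sigma_a$ inside $H_{R_1}^\delta$) in the compact component; since the scaled doughnut's self-shrinking flow becomes extinct strictly earlier, the parabolic avoidance principle gives a contradiction. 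No length accounting is needed, and the same scheme adapts to the plane and cylinder cases. Your first step (the uniform gap $L_A(\tau_k)\ge\delta_0$ from $\lambda(\Sigma_k)\ge\lambda(\mathbb S^1)>\lambda(\mathbb S^n)$ via \cite{HW} and (\ref{relation between entropy and length})) is fine for the sphere case, but everything after that is where the trouble lies.

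The first gap is the claim that the contribution to $L_A(\tau_k)$ from $\{r>R_1\}\cup\{|x|>R_1\}$ is $o(1)$ as $R_1\to\infty$ ``together with the entropy upper bound''. The entropy bound of Theorem \ref{upper bound on length} controls the total weighted length $\int\phi\,ds_{\mathrm{euc}}$ of $\sigma_k$; it says nothing about how that length is distributed, and a small pointwise weight in a region does not prevent a geodesic from accumulating an $O(1)$ amount of $g_A$-length there by having large Euclidean length. So you cannot conclude that $\tau_k$ is essentially confined to the low strip $S_{R_1}$. The second gap is the one you flag yourself: the bound $2\pi/\sqrt{\kappa}$ of Theorem \ref{upper bound on length of minimal geodesic in a ball} is for simple \emph{closed} geodesics bounding a compact domain, and there is no analogous a priori length bound for a simple geodesic \emph{arc} lying in a region of curvature $\ge\kappa$. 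Gauss--Bonnet applied to the domain enclosed by $\tau_k$ together with the horizontal segment at $r=R^{-1}$ only bounds $\int_\Omega K\,dA$ (hence the area of $\Omega$) after the large boundary terms from the non-geodesic segment and the corners are controlled, and passing from an area bound to a length bound on the geodesic part of the boundary requires an isoperimetric input that you do not supply and which is delicate near the degenerate axis $r=0$, where the conformal factor of $g_A$ collapses. As it stands the argument is a plausible program rather than a proof; I would recommend replacing the length-accounting steps with the paper's avoidance-principle barrier argument, which closes the case in a few lines and also covers the plane and cylinder limits where your entropy gap degenerates or disappears (e.g.\ $\lambda(\mathcal C)=\lambda(\mathbb S^1)$ when $n=2$).
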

		
		\begin{proof} [Proof of Lemma \ref{claim 1}] First let $s\in (0,1)$ be small so that the scaling of the Angenent doughnut $s\Sigma_{\sigma_a}$ lies completely inside the sphere $\Sigma_{\sigma_\infty}=\mathcal S$: that is, $x^2 + r^2 < 2n$ for all $(x, r) \in s\operatorname{Im}\sigma_a$. Let $\delta = d_0 (s\Sigma_{\sigma_a}, \Sigma_{\sigma_\infty} \cup \ell)$, where $\ell \subset \mathbb{R}^{n+1}$ is the axis of rotation and $d_0$ is the Euclidean distance. Note that $\delta >0$. Let $R_1 = \max\{ R_0, 2\delta^{-1}, 3\sqrt{2n}\}$. For any $R >R_1$, since $\sigma_k|_{I_{k, R}}$ converges to $\sigma_\infty|_{I_{R}}$ in $K_R$ uniformly, there is $k_1(R)>0$ so that 
			\begin{align} \label{d_0(sigma_k(I_k, R), sigma_infty(I_R)) <delta/2}
				d_0(\sigma_k(I_{k, R}), \sigma_\infty(I_R)) <\frac{\delta}{2}
			\end{align} 
			for all $k\ge k_1$. By the choice of $\delta$, we have
			$$\sigma_k(I_{k, R}) \cap s(\operatorname{Im}\sigma_a) = \emptyset,\ \ \text{ for all } k\ge k_1.$$
			
			Now we fix $k\ge k_1$ and show that $\operatorname{Im}\sigma_k\cap K_R$ has more than one component which intersects $K_{R_1}$. Assume the contrary, then $ \operatorname{Im}\sigma_k\cap K_{R_1} = \sigma_k (I_{k, R})\cap K_{R_1}$. Let 
			$$K_{R_1}^\delta = K_{R_1} \setminus \{ (x, r)\in \mathbb 
			H\ |\ \sqrt{x^2+r^2} < \sqrt{2n}+\delta\}. $$
			Note that $K_{R_1}^\delta$ is connected. By (\ref{d_0(sigma_k(I_k, R), sigma_infty(I_R)) <delta/2}), $K^\delta_{R_1}$ is disjoint from $\operatorname{Im}\sigma_k$. Since $\sigma_k$ is a simple closed curve, by the Jordan curve theorem, its image divides $\mathbb H$ into two connected components, where exactly one of them is compact. There are two cases: 
			\begin{itemize}
				\item [(i)] $K_{R_1}^\delta$ lies in the compact component (see Figure 1). Let 
				$$\tilde\sigma_ a:= s\sigma_a + (2\sqrt{2n},0)$$ 
				be the horizontal translation of $s\sigma_a$ by $(2\sqrt{2n}, 0)$. Since $R_1 > 3\sqrt{2n}$ and $R^{-1}_1 < \delta$, $\tilde\sigma_ a$ lies completely inside $K_{R_1}^\delta$ and thus in the compact component (see Figure 1). The translation is horizontal, hence the MCF $\{ \Sigma^t_{\tilde\sigma_ a}\}$ starting at $\Sigma_{\tilde \sigma_a}$ is also self-shrinking, centered at $(2\sqrt{2n},0)$. Since $s<1$, it becomes extinct before the MCF $\{\Sigma^t_{\sigma_k}\}$ starting at $\Sigma_{\sigma_k}$ does. This implies that $\Sigma^{t}_{\tilde \sigma_a}$ intersects $\Sigma^t_{\sigma_k}$ for some $t>0$, which contradicts the maximum principle since $\Sigma_{\tilde \sigma_a}$ and $\Sigma_{\sigma_k}$ are disjoint.

				\item [(ii)] $K_{R_1}^\delta$ lies in the non-compact component (see Figure 2): let 
				$$H_{R_1}^\delta = \{ (x, r) :K_{R_1}\ |\  \sqrt{x^2+r^2}\le \sqrt{2n}-\delta\}.$$
				Since $ H_{R_1}^\delta$ is a subset of $K_{R_1}$ and is disjoint from $\sigma_k (I_{k, R})$, $H^\delta_{R_1}$ also lies in a component of $\mathbb H\setminus \operatorname{Im}(\sigma_k)$. Since $K^\delta_{R_1}$, $H^\delta_{R_1}$ are separated by $\sigma_k (I_{k, R})$, then $H^\delta_{R_1}$ and $K_{R_1}^\delta$ lie in different components of $\mathbb H \setminus  \operatorname{Im}\sigma_k$. Thus $H^\delta_{R_1}$ is in the compact component bounded by $\operatorname{Im}\sigma_k$. By the choice of $R_1$ and since $R>R_1$, $s\operatorname{Im}\sigma_a$ lies in the compact component (see Figure 2). By considering the MCF starting at $s\Sigma_{\sigma_a}$ as in (i), we again arrive at a contradiction. 
			\end{itemize}
			
			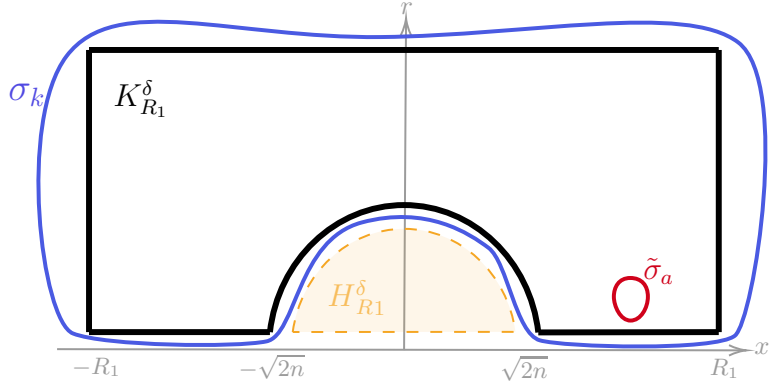
\begin{figure}[ht]
				\centering
				\tikzset{every picture/.style={line width=0.75pt}} 
				
				\begin{tikzpicture}[x=0.60pt,y=0.60pt,yscale=-1,xscale=1]
					
					\draw [color={rgb, 255:red, 155; green, 155; blue, 155 }  ,draw opacity=1 ]   (110.4,217.91) -- (542.94,218.67) ;
					\draw [shift={(544.94,218.68)}, rotate = 180.1] [color={rgb, 255:red, 155; green, 155; blue, 155 }  ,draw opacity=1 ][line width=0.75]    (10.93,-3.29) .. controls (6.95,-1.4) and (3.31,-0.3) .. (0,0) .. controls (3.31,0.3) and (6.95,1.4) .. (10.93,3.29)   ;
					\draw [color={rgb, 255:red, 155; green, 155; blue, 155 }  ,draw opacity=1 ]   (329.16,218.5) -- (330.39,11) ;
					\draw [shift={(330.4,9)}, rotate = 90.34] [color={rgb, 255:red, 155; green, 155; blue, 155 }  ,draw opacity=1 ][line width=0.75]    (10.93,-3.29) .. controls (6.95,-1.4) and (3.31,-0.3) .. (0,0) .. controls (3.31,0.3) and (6.95,1.4) .. (10.93,3.29)   ;
					\draw  [color={rgb, 255:red, 208; green, 2; blue, 27 }  ,draw opacity=1 ][line width=1.5]  (461.4,184.8) .. controls (461.4,178.81) and (464.52,173) .. (471.74,173) .. controls (478.96,173) and (482.86,177.72) .. (482.86,184.8) .. controls (482.86,191.88) and (478.37,199.87) .. (471.93,199.87) .. controls (465.5,199.87) and (461.4,190.79) .. (461.4,184.8) -- cycle ;
					\draw  [draw opacity=0][line width=2.25]  (244.58,207.15) .. controls (249.78,161.97) and (285.67,127) .. (329.16,127) .. controls (372.72,127) and (408.65,162.07) .. (413.77,207.35) -- (329.16,218.5) -- cycle ; \draw  [line width=2.25]  (244.58,207.15) .. controls (249.78,161.97) and (285.67,127) .. (329.16,127) .. controls (372.72,127) and (408.65,162.07) .. (413.77,207.35) ;
					\draw [line width=2.25]    (130.54,207.15) -- (130.54,29.15) ;
					\draw [line width=2.25]    (130.54,207.15) -- (244.58,207.15) ;
					\draw [line width=2.25]    (413.77,207.35) -- (527.54,207.15) ;
					\draw [line width=2.25]    (527.54,29.15) -- (527.54,207.15) ;
					\draw [line width=2.25]    (130.54,29.15) -- (527.54,29.15) ;
					
					\draw  [color={rgb, 255:red, 245; green, 166; blue, 35 }  ,draw opacity=1 ][fill={rgb, 255:red, 245; green, 166; blue, 35 }  ,fill opacity=0.1 ][dash pattern={on 4.5pt off 4.5pt}][line width=0.75]  (258.95,207.09) .. controls (264.06,170.25) and (293.55,142) .. (329.16,142) .. controls (364.75,142) and (394.22,170.21) .. (399.37,207) -- cycle ;
					\draw  [color={rgb, 255:red, 74; green, 90; blue, 226 }  ,draw opacity=1 ][line width=1.5]  (116.4,33) .. controls (147.4,-8) and (221.38,21.98) .. (314.4,21) .. controls (407.42,20.02) and (506.4,-1) .. (539.77,24.35) .. controls (573.15,49.69) and (549.77,199.35) .. (537.77,209.35) .. controls (525.77,219.35) and (430.15,216.69) .. (414.77,212.35) .. controls (399.4,208) and (394.4,163) .. (382.4,154) .. controls (370.4,145) and (343.4,127) .. (302.4,138) .. controls (261.4,149) and (261.4,209) .. (242.4,213) .. controls (223.4,217) and (140.4,216) .. (120.4,208) .. controls (100.4,200) and (85.4,74) .. (116.4,33) -- cycle ;
					
					\draw (548.22,212.08) node [anchor=north west][inner sep=0.75pt]  [font=\footnotesize,color={rgb, 255:red, 155; green, 155; blue, 155 }  ,opacity=1 ]  {$x$};
					\draw (325,-2.6) node [anchor=north west][inner sep=0.75pt]  [font=\footnotesize,color={rgb, 255:red, 155; green, 155; blue, 155 }  ,opacity=1 ]  {$r$};
					\draw (388,220.4) node [anchor=north west][inner sep=0.75pt]  [font=\scriptsize,color={rgb, 255:red, 155; green, 155; blue, 155 }  ,opacity=1 ]  {$\sqrt{2n}$};
					\draw (478.12,158.77) node [anchor=north west][inner sep=0.75pt]  [color={rgb, 255:red, 208; green, 2; blue, 27 }  ,opacity=1 ]  {$\tilde{\sigma }_{a}$};
					\draw (144,46.4) node [anchor=north west][inner sep=0.75pt]    {$K_{R_{1}}^{\delta }$};
					\draw (279,172.4) node [anchor=north west][inner sep=0.75pt]  [color={rgb, 255:red, 245; green, 166; blue, 35 }  ,opacity=0.71 ]  {$H{_{R}^{\delta }}_{1}$};
					\draw (520,222.4) node [anchor=north west][inner sep=0.75pt]  [font=\scriptsize,color={rgb, 255:red, 155; green, 155; blue, 155 }  ,opacity=1 ]  {$R_{1}$};
					\draw (118,221.4) node [anchor=north west][inner sep=0.75pt]  [font=\scriptsize,color={rgb, 255:red, 155; green, 155; blue, 155 }  ,opacity=1 ]  {$-R_{1}$};
					\draw (223,219.4) node [anchor=north west][inner sep=0.75pt]  [font=\scriptsize,color={rgb, 255:red, 155; green, 155; blue, 155 }  ,opacity=1 ]  {$-\sqrt{2n}$};
					\draw (78,48.4) node [anchor=north west][inner sep=0.75pt]  [font=\large,color={rgb, 255:red, 81; green, 74; blue, 226 }  ,opacity=1 ]  {$\sigma _{k}$};
				\end{tikzpicture}
				\caption{$\tilde\sigma_a$ is enclosed by $\sigma_k$.}
			\end{figure}

			\tikzset{every picture/.style={line width=0.75pt}} 
			\begin{figure}[ht]
				\centering
				\begin{tikzpicture}[x=0.60pt,y=0.60pt,yscale=-1,xscale=1]
					
					\draw [color={rgb, 255:red, 155; green, 155; blue, 155 }  ,draw opacity=1 ]   (108.4,216.91) -- (540.94,217.67) ;
					\draw [shift={(542.94,217.68)}, rotate = 180.1] [color={rgb, 255:red, 155; green, 155; blue, 155 }  ,draw opacity=1 ][line width=0.75]    (10.93,-3.29) .. controls (6.95,-1.4) and (3.31,-0.3) .. (0,0) .. controls (3.31,0.3) and (6.95,1.4) .. (10.93,3.29)   ;
					\draw [color={rgb, 255:red, 155; green, 155; blue, 155 }  ,draw opacity=1 ]   (327.16,217.5) -- (328.39,10) ;
					\draw [shift={(328.4,8)}, rotate = 90.34] [color={rgb, 255:red, 155; green, 155; blue, 155 }  ,draw opacity=1 ][line width=0.75]    (10.93,-3.29) .. controls (6.95,-1.4) and (3.31,-0.3) .. (0,0) .. controls (3.31,0.3) and (6.95,1.4) .. (10.93,3.29)   ;
					\draw  [color={rgb, 255:red, 208; green, 2; blue, 27 }  ,draw opacity=1 ][line width=1.5]  (317.4,183.8) .. controls (317.4,177.81) and (320.52,172) .. (327.74,172) .. controls (334.96,172) and (338.86,176.72) .. (338.86,183.8) .. controls (338.86,190.88) and (334.37,198.87) .. (327.93,198.87) .. controls (321.5,198.87) and (317.4,189.79) .. (317.4,183.8) -- cycle ;
					\draw  [draw opacity=0][dash pattern={on 4.5pt off 4.5pt}][line width=0.75]  (242.58,206.15) .. controls (247.78,160.97) and (283.67,126) .. (327.16,126) .. controls (370.72,126) and (406.65,161.07) .. (411.77,206.35) -- (327.16,217.5) -- cycle ; \draw  [color={rgb, 255:red, 128; green, 128; blue, 128 }  ,draw opacity=1 ][dash pattern={on 4.5pt off 4.5pt}][line width=0.75]  (242.58,206.15) .. controls (247.78,160.97) and (283.67,126) .. (327.16,126) .. controls (370.72,126) and (406.65,161.07) .. (411.77,206.35) ;
					\draw [color={rgb, 255:red, 128; green, 128; blue, 128 }  ,draw opacity=1 ][line width=0.75]  [dash pattern={on 4.5pt off 4.5pt}]  (128.54,206.15) -- (128.54,28.15) ;
					\draw [color={rgb, 255:red, 128; green, 128; blue, 128 }  ,draw opacity=1 ][line width=0.75]  [dash pattern={on 4.5pt off 4.5pt}]  (128.54,206.15) -- (242.58,206.15) ;
					\draw [color={rgb, 255:red, 128; green, 128; blue, 128 }  ,draw opacity=1 ][line width=0.75]  [dash pattern={on 4.5pt off 4.5pt}]  (411.77,206.35) -- (525.54,206.15) ;
					\draw [color={rgb, 255:red, 128; green, 128; blue, 128 }  ,draw opacity=1 ][line width=0.75]  [dash pattern={on 4.5pt off 4.5pt}]  (525.54,28.15) -- (525.54,206.15) ;
					\draw [color={rgb, 255:red, 128; green, 128; blue, 128 }  ,draw opacity=1 ][line width=0.75]  [dash pattern={on 4.5pt off 4.5pt}]  (128.54,28.15) -- (525.54,28.15) ;
					\draw  [color={rgb, 255:red, 245; green, 166; blue, 35 }  ,draw opacity=1 ][fill={rgb, 255:red, 245; green, 166; blue, 35 }  ,fill opacity=0.1 ][line width=2.25]  (256.95,206.09) .. controls (262.06,169.25) and (291.55,141) .. (327.16,141) .. controls (362.75,141) and (392.22,169.21) .. (397.37,206) -- cycle ;
					\draw  [color={rgb, 255:red, 74; green, 90; blue, 226 }  ,draw opacity=1 ][line width=1.5]  (402.4,207) .. controls (415.4,197) and (383.4,157) .. (371.4,148) .. controls (359.4,139) and (343.4,126) .. (302.4,137) .. controls (261.4,148) and (244.4,199) .. (249.4,207) .. controls (254.4,215) and (389.4,217) .. (402.4,207) -- cycle ;
					
					\draw (546.22,211.08) node [anchor=north west][inner sep=0.75pt]  [font=\footnotesize,color={rgb, 255:red, 155; green, 155; blue, 155 }  ,opacity=1 ]  {$x$};
					\draw (323,-3.6) node [anchor=north west][inner sep=0.75pt]  [font=\footnotesize,color={rgb, 255:red, 155; green, 155; blue, 155 }  ,opacity=1 ]  {$r$};
					\draw (386,219.4) node [anchor=north west][inner sep=0.75pt]  [font=\scriptsize,color={rgb, 255:red, 155; green, 155; blue, 155 }  ,opacity=1 ]  {$\sqrt{2n}$};
					\draw (333.12,158.77) node [anchor=north west][inner sep=0.75pt]  [color={rgb, 255:red, 208; green, 2; blue, 27 }  ,opacity=1 ]  {$s\sigma _{a}$};
					\draw (140,45.4) node [anchor=north west][inner sep=0.75pt]  [color={rgb, 255:red, 128; green, 128; blue, 128 }  ,opacity=1 ]  {$K_{R_{1}}^{\delta }$};
					\draw (277,171.4) node [anchor=north west][inner sep=0.75pt]  [color={rgb, 255:red, 245; green, 166; blue, 35 }  ,opacity=1 ]  {$H{_{R}^{\delta }}_{1}$};
					\draw (518,221.4) node [anchor=north west][inner sep=0.75pt]  [font=\scriptsize,color={rgb, 255:red, 155; green, 155; blue, 155 }  ,opacity=1 ]  {$R_{1}$};
					\draw (116,220.4) node [anchor=north west][inner sep=0.75pt]  [font=\scriptsize,color={rgb, 255:red, 155; green, 155; blue, 155 }  ,opacity=1 ]  {$-R_{1}$};
					\draw (221,218.4) node [anchor=north west][inner sep=0.75pt]  [font=\scriptsize,color={rgb, 255:red, 155; green, 155; blue, 155 }  ,opacity=1 ]  {$-\sqrt{2n}$};
					\draw (244,141.4) node [anchor=north west][inner sep=0.75pt]  [font=\large,color={rgb, 255:red, 81; green, 74; blue, 226 }  ,opacity=1 ]  {$\sigma _{k}$};
				\end{tikzpicture}
				\caption{$s\sigma_a$ is enclosed by $\sigma_k$.}
			\end{figure}
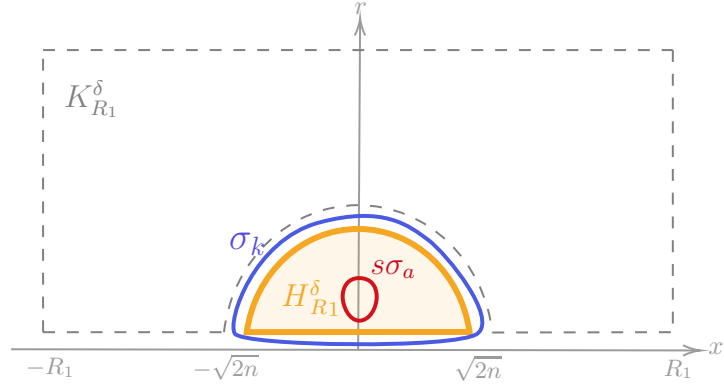
			Hence both cases are impossible, and we conclude that $\operatorname{Im}\sigma_k\cap K_R$ must contain more than one component which intersects $K_{R_1}$ for all $k\ge k_1$. This finishes the proof of Lemma \ref{claim 1}. 
		\end{proof}

		Let $R_1<R_2 < \ldots  < R_k < \ldots$ be any sequence such that $R_k \nearrow +\infty$ as $k\to \infty$. Using Lemma \ref{claim 1} and picking a subsequence of $(\sigma_k)_{k=1}^\infty$ if necessary, we may assume that $\operatorname{Im}\sigma_k\cap K_{R_k}$ has more than one component which intersects $K_{R_1}$. For each $k$, let $N_k\ge 2$ be the number of such connected components. Since there is a uniform positive lower bound on $d_A (K_{R_1}, \mathbb H \setminus K_{R_k})$ for all $k\ge 2$, $N_k$ is uniformly bounded by Theorem \ref{upper bound on length} (see also \eqref{relation between entropy and length}). Taking a further subsequence if necessary, we may assume that $N:=N_k$ is constant.
		
		Thus for each $k$, write $\operatorname{Im}\sigma_k \cap K_{R_k}$ as the disjoint union
		\begin{align} \label{splitting sigma_k into N components}
			\operatorname{Im}\sigma_k \cap K_{R_k} = \operatorname{Im}\sigma^1_k \cup \cdots\cup \operatorname{Im}\sigma^N_k \cup \sigma_k^C,
		\end{align}
		where each $\sigma^i_k$, $i=1, \cdots, N$ is a simple geodesic arc parametrizing a connected component of $\operatorname{Im} \sigma_k \cap K_{R_k}$ which intersects $K_{R_1}$ and $\sigma_k^C$ is the union of any connected components of $\operatorname{Im} \sigma_k \cap K_{R_k}$ which do not intersect $K_{R_1}$. 
		
		For each $k\in \mathbb N$, let $i_k \in \{1, \cdots, N\}$ be arbitrary. Up to reparametrization, there are $c_k>0$ such that
		$$\sigma^{i_k}_k : [-c_k, c_k] \to \mathbb H$$ 
		is a unit speed geodesic. Since the image of $\sigma^{i_k}_k$ intersects $K_{R_1}$, one has 
		$$2c_k \ge d_A(K_{R_1}, \mathbb H \setminus K_{R_2}) >0,$$ 
		and there are  $\bar c_k \in [-c_k, c_k]$ such that $\sigma^{i_k}_k(\bar c_k) \in K_{R_1}$. 
		Taking a further subsequence if necessary, by compactness of $K_{R_1}$, we may assume that 
		$$\sigma^{i_k}_k(\bar c_k) \to q, \ \ \ (\sigma^{i_k}_k)'(\bar c_k) \to w$$ 
		as $k\to \infty$. Let $\tilde \sigma_\infty : J \to \mathbb H$ be the unique complete maximal geodesic with $ \tilde \sigma_\infty (0) = q$ and $\tilde\sigma_\infty'(0)=w$. 
		
		As in the construction of $\sigma_\infty$, $(\sigma^{i_k}_k)$ converges smoothly to $\tilde\sigma_\infty$ in $K_R$ for all $R>R_1$ and $\tilde\sigma_\infty$ is an embedded geodesic in $(\mathbb H, g_A)$. 
		
		\begin{lem} \label{claim 2}
			Up to reparametrization, $\tilde\sigma_\infty = \sigma_\infty$. 
		\end{lem}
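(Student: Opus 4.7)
The plan is to argue by contradiction. Suppose $\operatorname{Im}\tilde\sigma_\infty \neq \operatorname{Im}\sigma_\infty$. Since each of the two limit curves is a smooth embedded geodesic in $(\mathbb{H}, g_A)$ giving rise to a complete embedded rotationally symmetric self-shrinker, $\Sigma_{\sigma_\infty} = \mathcal{S}$ and $\Sigma_{\tilde\sigma_\infty}$ are then two distinct such self-shrinkers in $\mathbb{R}^{n+1}$. Because $\mathcal{S}$ is compact, Lemma \ref{all tori intersect} forces the two hypersurfaces to intersect, and by rotational symmetry about the common axis the profile curves $\sigma_\infty$ and $\tilde\sigma_\infty$ must then meet at some point $p^* \in \mathbb{H}$.

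Next, I would argue that the meeting at $p^*$ is transverse: if the velocity vectors of $\sigma_\infty$ and $\tilde\sigma_\infty$ at $p^*$ were parallel, uniqueness of solutions to the geodesic ODE on $(\mathbb{H}, g_A)$ would force $\operatorname{Im}\sigma_\infty = \operatorname{Im}\tilde\sigma_\infty$, contrary to assumption. Choosing $R > R_1$ large enough that $p^*$ lies in the interior of $K_R$, the smooth convergences $\sigma_k|_{I_{k, R}} \to \sigma_\infty|_{I_R}$ and $\sigma^{i_k}_k \to \tilde\sigma_\infty$ within $K_R$ imply that, for $k$ sufficiently large, $\sigma_k(I_{k,R})$ and $\operatorname{Im}\sigma^{i_k}_k \cap K_R$ are $C^1$-close to the two transverse limit geodesics in a fixed small neighborhood of $p^*$. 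Transversality of the limits then forces the two approximating arcs to actually cross inside $K_R$.

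On the other hand, the decomposition (\ref{splitting sigma_k into N components}) records that in the non-trivial case $\sigma_k(I_{k, R_k}) \supseteq \sigma_k(I_{k, R})$ and $\operatorname{Im}\sigma^{i_k}_k$ are disjoint connected components of $\operatorname{Im}\sigma_k \cap K_{R_k}$, so they are the images under the embedded loop $\sigma_k : [-d_k, d_k] \to \mathbb{H}$ of disjoint subintervals. A common point of the two arcs would therefore be a self-intersection of $\sigma_k$, contradicting embeddedness. (If along a subsequence $\sigma^{i_k}_k$ happens to reparametrize the same component $\sigma_k(I_{k, R_k})$, then $\tilde\sigma_\infty = \sigma_\infty$ up to reparametrization is immediate from uniqueness of the smooth limit.) This contradiction yields $\operatorname{Im}\tilde\sigma_\infty = \operatorname{Im}\sigma_\infty$, and since both limits are unit-speed geodesics, they agree up to reparametrization.

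The main technical step I anticipate is promoting transversality of the limit geodesics to a genuine crossing of the pre-limit arcs. I would handle this by picking smooth local coordinates $(u,v)$ at $p^*$ in which $\operatorname{Im}\sigma_\infty$ is $\{v=0\}$ and $\operatorname{Im}\tilde\sigma_\infty$ is $\{u=0\}$ inside a small disk $B \subset K_R$ around $p^*$; smooth convergence then represents the two approximating arcs as graphs $v = f_k(u)$ and $u = g_k(v)$ with $f_k, g_k \to 0$ in $C^1$ on $B$, so a short intermediate value or fixed-point argument applied to $u \mapsto g_k(f_k(u))$ produces the required intersection. Everything else is direct from results already stated in the excerpt, in particular Lemma \ref{all tori intersect} and the convergence statements established above.
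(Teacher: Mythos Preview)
Your proof is correct and follows essentially the same route as the paper: assume the two limit geodesics are distinct, invoke Lemma~\ref{all tori intersect} to get an intersection, upgrade it to a transverse intersection via uniqueness for the geodesic ODE, and then use the smooth convergence of the two approximating arcs to force a self-intersection of the embedded $\sigma_k$. The paper phrases the crossing step slightly differently (first arguing $\sigma_k(I_{k,R})$ meets the fixed limit $\tilde\sigma_\infty$, then that $\sigma_k^{i_k}$ meets $\sigma_k(I_{k,R})$), but this is just a two-step version of the same transversality-and-$C^1$-convergence argument you spell out in local coordinates.
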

		
		\begin{proof} [Proof of Lemma \ref{claim 2}] We may assume that $\sigma^{i_k}_k \neq \sigma_k |_{I_{k, R_k}}$ for all large $k$, since otherwise we have $\tilde\sigma_\infty =\sigma_\infty$ up to reparametrization. This implies that $\operatorname{Im}(\sigma_k|_{I_{k,R_k}})$ and $\operatorname{Im}\sigma_k^{i_k}$ have empty intersection since both of them are connected components of $\operatorname{Im} \sigma_k \cap K_{R_k}$. Assume the contrary, that $\tilde\sigma_\infty \neq \sigma_\infty$. By Lemma \ref{all tori intersect} and that $\tilde\sigma_\infty$, $\sigma_\infty$ are both complete geodesics, they must intersect transversally. Assume that the intersection is in $K_R$ for some $R>R_1$. Since $\sigma_k |_{I_{k, R}}$ converges locally smoothly to $\sigma_\infty$ in $K_{R}$, $\sigma_{k} (I_{k, R})$ also intersects $\tilde\sigma_\infty$ for $k$ large enough. On the other hand, $\sigma^{i_k}_k$ converges smoothly to $\tilde\sigma_\infty$, thus $\sigma^{i_k}_k$ also intersects $\sigma_k |_{I_{k, R}}$ in $K_R$ for large $k$. This contradicts the assumption on $\sigma^{i_k}_k$ and hence the lemma is proved. 
		\end{proof}
		
		Thus the union of subarcs $\operatorname{Im} \sigma_k^1 \cup \cdots  \cup \operatorname{Im} \sigma_k^N$ of $\operatorname{Im}\sigma_k$ converge as $k\to \infty$ to $\sigma_\infty$ locally smoothly with multiplicity $N$. While $\sigma^C_k$ defined in (\ref{splitting sigma_k into N components}) might not be empty, we can show that (by passing to a subsequence if necessary) it also stays close to $\sigma_\infty$.
		
		\begin{lem} \label{sigma^c_k stays close to sigma_infty}
			By passing to a subsequence of $(\sigma_k)$ if necessary,  
			\begin{equation} \label{eqn sigma^C_k closed to sigma_infty}
				|\sqrt{x^2+r^2} - \sqrt{2n}|<R_k^{-1}
			\end{equation}
			for all $(x, r)\in \operatorname{Im} (\sigma_k) \cap K_{R_k}$ and for all $k\in \mathbb N$.
		\end{lem}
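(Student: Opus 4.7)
The plan is to argue by contradiction, extracting a second limit geodesic $\tau_\infty \neq \sigma_\infty$ from points of $\sigma_k$ violating the claim and deriving a contradiction with the embeddedness of the $\sigma_k$, in direct analogy with the argument of Lemma \ref{claim 2}.

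Suppose the conclusion fails. Then, after passing to a subsequence, for each $k$ there is a point $p_k = (x_k, r_k) \in \operatorname{Im}(\sigma_k) \cap K_{R_k}$ with $|\sqrt{x_k^2 + r_k^2} - \sqrt{2n}| \geq R_k^{-1}$. I would first reduce to the case in which the sequence $(p_k)$ lies in a compact subset of $\mathbb{H}$ at uniform positive distance from the limit semicircle $\operatorname{Im}(\sigma_\infty) = \{x^2 + r^2 = 2n,\ r > 0\}$. This uses the uniform $g_A$-length bound from Theorem \ref{upper bound on length}, together with the degeneracy of the Angenent metric at $\{r = 0\}$ and its exponential decay at Euclidean infinity, to rule out that $p_k$ escapes to the boundary of $\mathbb{H}$ along a geodesic arc of controlled length. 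Passing to a further subsequence, $p_k \to p_\infty \in \mathbb{H} \setminus \operatorname{Im}(\sigma_\infty)$ and the unit $g_A$-tangents $v_k$ to $\sigma_k$ at $p_k$ converge to some unit vector $v_\infty$.

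Let $\tau_\infty : J \to \mathbb{H}$ be the maximal geodesic with $\tau_\infty(0) = p_\infty$ and $\tau_\infty'(0) = v_\infty$. Repeating verbatim the construction used earlier for $\sigma_\infty$, the sub-arcs $\tau_k$ of $\sigma_k$ through $p_k$ converge locally smoothly to $\tau_\infty$ on each $K_R$, the geodesic $\tau_\infty$ is embedded, and $\Sigma_{\tau_\infty}$ is a complete embedded rotationally symmetric self-shrinker. By Theorem \ref{partial classification of KM}, $\Sigma_{\tau_\infty}$ is one of $\mathcal{P}$, $\mathcal{S}$, $\mathcal{C}$, or a doughnut. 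Since $p_\infty \notin \operatorname{Im}(\sigma_\infty)$, we have $\tau_\infty \neq \sigma_\infty$ in $\mathbb{H}$, hence $\Sigma_{\tau_\infty} \neq \mathcal{S} = \Sigma_{\sigma_\infty}$. Lemma \ref{all tori intersect} then gives $\Sigma_{\tau_\infty} \cap \Sigma_{\sigma_\infty} \neq \emptyset$, which by rotational symmetry forces $\operatorname{Im}(\tau_\infty) \cap \operatorname{Im}(\sigma_\infty) \ni q$ for some $q \in \mathbb{H}$, and the intersection at $q$ is transverse since $\tau_\infty \neq \sigma_\infty$ are distinct geodesics in $(\mathbb{H}, g_A)$.

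The smooth convergences $\sigma_k^i \to \sigma_\infty$ and $\tau_k \to \tau_\infty$ then imply that for $k$ large, $\sigma_k^i$ and $\tau_k$ intersect transversely near $q$. But $\sigma_k^i$ and $\tau_k$ are distinct connected components of $\operatorname{Im}(\sigma_k) \cap K_{R_k}$ (distinct because their locally smooth limits differ), and hence are disjoint; this is the desired contradiction. The main obstacle is the opening reduction step: a careful use of Theorem \ref{upper bound on length} together with the degenerate structure of $g_A$ is needed to prevent $(p_k)$ from escaping to $\partial \mathbb{H}$ or to Euclidean infinity along a geodesic arc of bounded $g_A$-length, which is where the extraction of $\tau_\infty$ would break down.
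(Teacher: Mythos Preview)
Your core idea---extract a second limit geodesic from the violating points and reach a contradiction as in Lemma~\ref{claim 2}---is exactly the paper's, but the reduction step you flag as ``the main obstacle'' is a genuine gap, and in fact hides a second difficulty you do not mention. You need $(p_k)$ to subconverge to some $p_\infty \in \mathbb{H}\setminus\operatorname{Im}(\sigma_\infty)$. Even granting that you can prevent escape to $\partial\mathbb{H}$ or to Euclidean infinity, the only lower bound you have is $\big|\sqrt{x_k^2+r_k^2}-\sqrt{2n}\big|\ge R_k^{-1}\to 0$, so nothing prevents $p_\infty\in\operatorname{Im}(\sigma_\infty)$; in that case $\tau_\infty=\sigma_\infty$ and no contradiction follows. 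Your claimed ``uniform positive distance from the limit semicircle'' simply cannot be extracted from the negation as you have set it up.

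The paper sidesteps both issues by a change in the order of quantifiers. Instead of negating the full statement, fix $k$ and prove the stronger assertion: there exists $n_k$ such that $\big|\sqrt{x^2+r^2}-\sqrt{2n}\big|<R_k^{-1}$ for every $(x,r)\in\operatorname{Im}(\sigma_i)\cap K_{R_k}$ and every $i\ge n_k$. Now $K_{R_k}$ is a \emph{fixed} compact set, so violating points automatically subconverge in $K_{R_k}$ with no escape argument needed; and the threshold $R_k^{-1}$ is a \emph{fixed} positive number, so the limit point satisfies $\big|\sqrt{x_\infty^2+r_\infty^2}-\sqrt{2n}\big|\ge R_k^{-1}>0$ and hence lies off $\operatorname{Im}(\sigma_\infty)$. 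From there your contradiction argument runs exactly as written. A final diagonal extraction of a subsequence of $(\sigma_k)$ gives the lemma as stated.
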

		
		\begin{proof}[Proof of Lemma \ref{sigma^c_k stays close to sigma_infty}]
			Let $k\in \mathbb N$ be fixed. First we show that there is $n_k$ so that (\ref{eqn sigma^C_k closed to sigma_infty}) holds for all $(x, r)\in \operatorname{Im} (\sigma_i) \cap K_{R_k}$ and for all $i\ge n_k$. To see this we argue by contradiction: if not, then there is a subsequence of $(\sigma_{k_j})$ of $(\sigma_k)$ and $(x_j, r_j)\in \operatorname{Im} (\sigma_{k_j})\cap K_{R_k}$ so that $|\sqrt{x_j^2+r_j^2} - \sqrt{2n}|\ge R_k^{-1}$ for all $j$. Since $K_{R_k}$ is compact, we may assume that $(x_j, r_j)\to (x_\infty, r_\infty) \in K_{R_k}$, and thus there is a sequence $(\tilde\sigma_j)$ of subarcs of $(\sigma_{k_j})$ which converges locally smoothly to an embedded complete geodesic in $(\mathbb H, g_A)$, which is different from $\sigma_\infty$ since $(x_\infty, r_\infty) \notin \operatorname{Im}(\sigma_\infty)$. Arguing similarly as in the proof of Lemma \ref{claim 2}, this is impossible. Lastly, the lemma is proved by passing to a subsequence of $(\sigma_k)$.
		\end{proof}
		
		A priori, as $k\to \infty$ some portions of $\operatorname{Im} \sigma_k$ might escape to infinity (as $x^2 + r^2\to \infty$) or collapse to the rotational axis $r=  0$. The next lemma shows that this is not the case. 
		
		\begin{lem} \label{claim 3} 
			The sequence of self-shrinkers $(\Sigma_{\sigma_k})_{k=1}^\infty$ converges in Hausdorff distance to the round sphere $\Sigma_{\sigma_\infty}$ as $k\to \infty$. 
		\end{lem}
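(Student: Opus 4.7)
The plan is to combine the local smooth convergence established in Lemma \ref{claim 2} and the radial closeness established in Lemma \ref{sigma^c_k stays close to sigma_infty} with avoidance-barrier arguments in the spirit of the proof of Lemma \ref{claim 1}, in order to rule out both escape to infinity and lateral collapse along the axis, and thereby obtain Hausdorff convergence $\Sigma_{\sigma_k} \to \mathcal{S}$.

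The first step is to exploit Lemma \ref{sigma^c_k stays close to sigma_infty}: every point of $\operatorname{Im}\sigma_k \cap K_{R_k}$ lies within radial distance $R_k^{-1}$ of the circle $x^2 + r^2 = 2n$, hence within Euclidean distance $o(1)$ of the sphere profile $\operatorname{Im}\sigma_\infty$ (which is the upper half of that circle). It remains only to control the outer part $\operatorname{Im}\sigma_k \setminus K_{R_k}$, which lies in $\{|x|>R_k\}\cup\{r>R_k\}\cup\{r<R_k^{-1}\}$. Since $\sigma_k$ is a simple closed curve meeting $K_{R_1}$, each such outer arc has both endpoints on $\partial K_{R_k}$, and by Lemma \ref{sigma^c_k stays close to sigma_infty} these endpoints are forced to lie near the two axis endpoints $(\pm\sqrt{2n},0)$ of $\sigma_\infty$. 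I would then rule out, for each of the three offending regions separately, the possibility that such an outer arc strays far from $(\pm\sqrt{2n},0)$: in each scenario I would place a small scaled Angenent doughnut $s\Sigma_{\sigma_a}+De_1$, horizontally translated along the axis to the offending $x$-location, as an MCF barrier, and exploit the extinction-time mismatch between its self-similar shrinking evolution and the MCF of $\Sigma_{\sigma_k}$ to derive a contradiction, exactly as in the proof of Lemma \ref{claim 1}.

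Combining these estimates, every point of $\operatorname{Im}\sigma_k$ is contained in an $o(1)$-Euclidean neighborhood of $\operatorname{Im}\sigma_\infty$ in $\mathbb{R}^2$; the converse inclusion (every point of $\operatorname{Im}\sigma_\infty$ is approximated by points of $\operatorname{Im}\sigma_k$) is immediate from the local smooth convergence of the arcs $\sigma_k^i \to \sigma_\infty$ in compact subsets of $\mathbb{H}$. Revolving about the axis then upgrades this to Hausdorff convergence $\Sigma_{\sigma_k} \to \mathcal{S}$ in $\mathbb{R}^{n+1}$. The main obstacle will be setting up the three barrier arguments rigorously: each offending region sits in a "degenerate" part of the Angenent metric (near the axis $r=0$ or at spatial infinity, where $r^{n-1}e^{-(x^2+r^2)/4}\to 0$), so the global length bound from Theorem \ref{upper bound on length of minimal geodesic in a ball} alone is too weak to forbid long Euclidean excursions there; the MCF avoidance principle together with a carefully chosen scale $s$ and translation $D$ for the barrier doughnut in each subcase is what makes the argument go through.
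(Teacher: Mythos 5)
Your proposal follows essentially the same route as the paper's proof: Lemma \ref{sigma^c_k stays close to sigma_infty} pins $\operatorname{Im}\sigma_k\cap K_{R_k}$ to the circle $\sqrt{x^2+r^2}=\sqrt{2n}$, translated and scaled Angenent doughnuts serve as MCF barriers to control the arcs that leave $K_{R_k}$ (which, as you correctly observe, can only exit near $(\pm\sqrt{2n},0)$), and the reverse Hausdorff inclusion comes from the smooth convergence on compact sets. The one place where ``exactly as in the proof of Lemma \ref{claim 1}'' oversells the analogy is the mechanism that forces the forbidden intersection: in Lemma \ref{claim 1} the barrier doughnut is \emph{enclosed} in the compact component bounded by $\Sigma_{\sigma_k}$ and shrinks to a point on the axis outside that component, whereas an arc $\beta$ of $\sigma_k$ straying past radius $\sqrt{2n}+\epsilon$ need not enclose any barrier, so extinction-time mismatch alone gives nothing. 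The paper instead chooses $t_0$ via \eqref{eqn choice of t_0} so that for every $t\in(-1,t_0)$ the rescaled arc $\sqrt{-t}\beta$ still meets the set $\{\sqrt{2n}+0.5\epsilon\}\times(0,R_{k_\epsilon})$, i.e.\ the region onto which the barrier $\Sigma_{\bar a_+}$ collapses by time $t_e<t_0$; it is this sweeping argument that produces the intersection contradicting avoidance. With that adjustment (and the symmetric barrier $\bar a_-$ for the inner bound), your plan is the paper's proof.
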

		
		\begin{proof} [Proof of Lemma \ref{claim 3}] We use a maximum principle argument similar to that in the proof of Lemma \ref{claim 1}. Let $\epsilon>0$. Let $t_0 \in (-1, 0)$ be given by 
			\begin{equation} \label{eqn choice of t_0}
				\sqrt{-t_0} = \frac{\sqrt{2n} + 0.5\epsilon}{\sqrt{2n}+\epsilon}.
			\end{equation}
			Let $k_\epsilon \in \mathbb N$ be large such that $R_{k_\epsilon}^{-1}<\epsilon/2$ and the following holds: there are two horizontal translations and scalings of the Angenent torus $\Sigma_{\bar a_\pm}$ so that
			
			\begin{itemize}
				\item the image of $\bar a_+$ and $\bar a_-$ lie in $K_{R_{k_\epsilon}}$, 
				\item $\sqrt{2n}+R_{k_\epsilon}^{-1}< \sqrt{x^2+r^2} <\sqrt{2n}+\epsilon$ for all $(x, r) \in \operatorname{Im} \bar a_+$, 
				\item $\sqrt{2n}-\epsilon <\sqrt{x^2+r^2}<\sqrt{2n}-R_{k_\epsilon}^{-1}$ for all $(x, r) \in \operatorname{Im} \bar a_-$, and
				\item the MCF starting at $\Sigma_{\bar a_\pm}$ at $t=-1$ shrinks to $(\sqrt{2n} \pm 0.5\epsilon, 0)$ at time $t_e <t_0$. 
			\end{itemize}
			By Lemma \ref{sigma^c_k stays close to sigma_infty}, $\bar a_{\pm}$ do not intersect with $\sigma_k$ when $k\ge k_\epsilon$.
			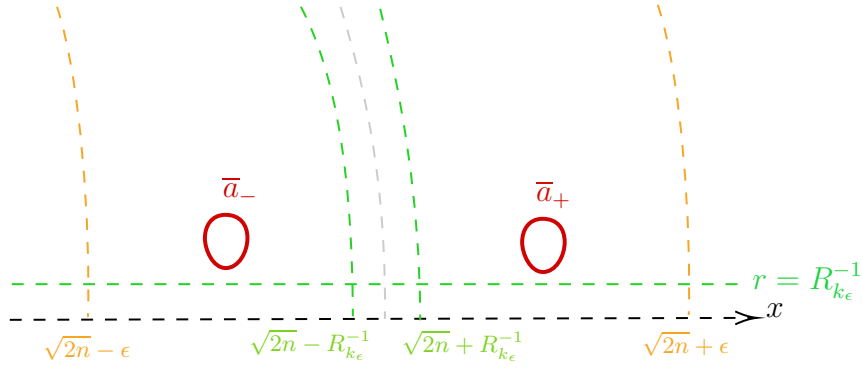
\begin{figure}[ht]
				\tikzset{every picture/.style={line width=0.75pt}} 
				
				\begin{tikzpicture}[x=0.75pt,y=0.75pt,yscale=-1,xscale=1]
					
					\draw  [dash pattern={on 4.5pt off 4.5pt}]  (124,161) -- (498.8,160.01) ;
					\draw [shift={(500.8,160)}, rotate = 179.85] [color={rgb, 255:red, 0; green, 0; blue, 0 }  ][line width=0.75]    (10.93,-3.29) .. controls (6.95,-1.4) and (3.31,-0.3) .. (0,0) .. controls (3.31,0.3) and (6.95,1.4) .. (10.93,3.29)   ;
					\draw [color={rgb, 255:red, 155; green, 155; blue, 155 }  ,draw opacity=0.52 ] [dash pattern={on 4.5pt off 4.5pt}]  (290.8,3) .. controls (306.8,55) and (313.8,103) .. (313.19,160.5) ;
					\draw [color={rgb, 255:red, 245; green, 166; blue, 35 }  ,draw opacity=1 ] [dash pattern={on 4.5pt off 4.5pt}]  (147.8,3) .. controls (159.8,39) and (164.8,115) .. (163.4,159.5) ;
					\draw [color={rgb, 255:red, 35; green, 211; blue, 33 }  ,draw opacity=1 ] [dash pattern={on 4.5pt off 4.5pt}]  (125,143) -- (495.8,143) ;
					\draw [color={rgb, 255:red, 35; green, 211; blue, 33 }  ,draw opacity=1 ] [dash pattern={on 4.5pt off 4.5pt}]  (270.8,3) .. controls (288.8,37) and (296.8,89) .. (297.01,159.22) ;
					\draw  [color={rgb, 255:red, 208; green, 2; blue, 2 }  ,draw opacity=1 ][line width=1.5]  (382.4,121.8) .. controls (382.4,115.81) and (385.52,110) .. (392.74,110) .. controls (399.96,110) and (403.86,114.72) .. (403.86,121.8) .. controls (403.86,128.88) and (399.37,136.87) .. (392.93,136.87) .. controls (386.5,136.87) and (382.4,127.79) .. (382.4,121.8) -- cycle ;
					\draw  [color={rgb, 255:red, 208; green, 2; blue, 2 }  ,draw opacity=1 ][line width=1.5]  (222.4,119.8) .. controls (222.4,113.81) and (225.52,108) .. (232.74,108) .. controls (239.96,108) and (243.86,112.72) .. (243.86,119.8) .. controls (243.86,126.88) and (239.37,134.87) .. (232.93,134.87) .. controls (226.5,134.87) and (222.4,125.79) .. (222.4,119.8) -- cycle ;
					\draw [color={rgb, 255:red, 35; green, 211; blue, 33 }  ,draw opacity=1 ] [dash pattern={on 4.5pt off 4.5pt}]  (310.8,4) .. controls (321.8,50) and (330.8,100) .. (331.01,161.22) ;
					\draw [color={rgb, 255:red, 245; green, 166; blue, 35 }  ,draw opacity=1 ] [dash pattern={on 4.5pt off 4.5pt}]  (450.8,2) .. controls (462.8,38) and (467.8,114) .. (466.4,158.5) ;
					
					\draw (320.4,164.9) node [anchor=north west][inner sep=0.75pt]  [font=\scriptsize,color={rgb, 255:red, 126; green, 211; blue, 33 }  ,opacity=1 ]  {$\sqrt{2n} +R_{k_{\epsilon }}^{-1}$};
					\draw (243.4,164.4) node [anchor=north west][inner sep=0.75pt]  [font=\scriptsize,color={rgb, 255:red, 126; green, 211; blue, 33 }  ,opacity=1 ]  {$\sqrt{2n} -R_{k_{\epsilon }}^{-1}$};
					\draw (139.4,167.4) node [anchor=north west][inner sep=0.75pt]  [font=\scriptsize,color={rgb, 255:red, 245; green, 166; blue, 35 }  ,opacity=1 ]  {$\sqrt{2n} -\epsilon $};
					\draw (441.4,165.4) node [anchor=north west][inner sep=0.75pt]  [font=\scriptsize,color={rgb, 255:red, 245; green, 166; blue, 35 }  ,opacity=1 ]  {$\sqrt{2n} +\epsilon $};
					\draw (504,151.4) node [anchor=north west][inner sep=0.75pt]    {$x$};
					\draw (497,130.4) node [anchor=north west][inner sep=0.75pt]  [color={rgb, 255:red, 33; green, 211; blue, 65 }  ,opacity=1 ]  {$r=R_{k_{\epsilon }}^{-1}$};
					\draw (388,89.4) node [anchor=north west][inner sep=0.75pt]  [color={rgb, 255:red, 208; green, 13; blue, 2 }  ,opacity=1 ]  {$\overline{a}_{+}$};
					\draw (230,87.4) node [anchor=north west][inner sep=0.75pt]  [color={rgb, 255:red, 208; green, 2; blue, 2 }  ,opacity=1 ]  {$\overline{a}_{-}$};

				\end{tikzpicture}
				\caption{Choices of $\bar a_\pm$.}
			\end{figure}
			
			Now we claim that 
			\begin{align} \label{Hausdorff converges to round sphere eqn}
				\sqrt{2n}-\epsilon<\sqrt{x^2+r^2}<\sqrt{2n}+\epsilon, \ \ \text{ for all }(x,r)\in \operatorname{Im}\sigma_k, \ k\ge k_\epsilon.
			\end{align}
			By Lemma \ref{sigma^c_k stays close to sigma_infty} and that $R_{k_\epsilon}^{-1}<\epsilon$, it suffices to consider points outside $K_{R_{k_\epsilon}}$. Fix any $k\ge k_\epsilon$. If there is $(x,r)\in \operatorname{Im}(\sigma_k)$ so that $\sqrt{x^2+r^2} \ge \sqrt{2n} +\epsilon$, then since each $\sigma_k$ is connected, there is a subarc $\beta$ of $\sigma_k$ connecting $(x,r)$ and $\operatorname{Im} (\sigma_k) \cap K_{R_{k_\epsilon}}$ passing through the region 
			\[ [\sqrt{2n}, \sqrt{2n} +\epsilon] \times (0,R_{k_\epsilon}].\] 
			But this is impossible: by (\ref{eqn choice of t_0}), for all $-1<t<t_0$, $\sqrt{-t}\beta$ contains a point $\{\sqrt{2n} + 0.5\epsilon\} \times (0,R_{k_\epsilon})$. Hence the MCF starting at $\Sigma_{\bar a_+}$ (at time $-1$) would intersect with $\sqrt{-t}\beta$ for some $t\in (-1, t_e)$ and this contradicts the parabolic maximum principle, since $\Sigma_{\sigma_k}$ and $\Sigma_{\bar a_+}$ are disjoint. Using $\Sigma_{\bar a_-}$ and arguing similarly, we conclude that $\sqrt{x^2+r^2}\le \sqrt{2n}-\epsilon$ is also impossible. 
			\begin{figure}[ht]
				\tikzset{every picture/.style={line width=0.75pt}} 
				
				\begin{tikzpicture}[x=0.75pt,y=0.75pt,yscale=-1,xscale=1]
					
					\draw  [dash pattern={on 4.5pt off 4.5pt}]  (8,139.51) -- (312.47,138.67) ;
					\draw [shift={(314.47,138.66)}, rotate = 179.84] [color={rgb, 255:red, 0; green, 0; blue, 0 }  ][line width=0.75]    (10.93,-3.29) .. controls (6.95,-1.4) and (3.31,-0.3) .. (0,0) .. controls (3.31,0.3) and (6.95,1.4) .. (10.93,3.29)   ;
					\draw [color={rgb, 255:red, 155; green, 155; blue, 155 }  ,draw opacity=0.52 ] [dash pattern={on 4.5pt off 4.5pt}]  (143.67,5.85) .. controls (156.68,49.84) and (162.37,90.44) .. (161.88,139.08) ;
					\draw [color={rgb, 255:red, 245; green, 166; blue, 35 }  ,draw opacity=1 ] [dash pattern={on 4.5pt off 4.5pt}]  (27.36,5.85) .. controls (37.12,36.3) and (41.18,100.59) .. (40.05,138.24) ;
					\draw [color={rgb, 255:red, 35; green, 211; blue, 33 }  ,draw opacity=1 ] [dash pattern={on 4.5pt off 4.5pt}]  (8.81,124.28) -- (310.41,124.28) ;
					\draw [color={rgb, 255:red, 35; green, 211; blue, 33 }  ,draw opacity=1 ] [dash pattern={on 4.5pt off 4.5pt}]  (127.4,5.85) .. controls (142.04,34.61) and (148.55,78.6) .. (148.72,138) ;
					\draw  [color={rgb, 255:red, 208; green, 2; blue, 2 }  ,draw opacity=1 ][line width=1.5]  (218.17,106.35) .. controls (218.17,101.28) and (220.71,96.36) .. (226.58,96.36) .. controls (232.45,96.36) and (235.62,100.36) .. (235.62,106.35) .. controls (235.62,112.34) and (231.97,119.09) .. (226.74,119.09) .. controls (221.5,119.09) and (218.17,111.41) .. (218.17,106.35) -- cycle ;
					\draw [color={rgb, 255:red, 35; green, 211; blue, 33 }  ,draw opacity=1 ] [dash pattern={on 4.5pt off 4.5pt}]  (159.93,6.69) .. controls (168.88,45.61) and (176.2,87.9) .. (176.37,139.69) ;
					\draw [color={rgb, 255:red, 245; green, 166; blue, 35 }  ,draw opacity=1 ] [dash pattern={on 4.5pt off 4.5pt}]  (273.8,5) .. controls (283.56,35.45) and (287.63,99.75) .. (286.49,137.39) ;
					\draw  [dash pattern={on 4.5pt off 4.5pt}]  (337,139.51) -- (641.47,138.67) ;
					\draw [shift={(643.47,138.66)}, rotate = 179.84] [color={rgb, 255:red, 0; green, 0; blue, 0 }  ][line width=0.75]    (10.93,-3.29) .. controls (6.95,-1.4) and (3.31,-0.3) .. (0,0) .. controls (3.31,0.3) and (6.95,1.4) .. (10.93,3.29)   ;
					\draw [color={rgb, 255:red, 155; green, 155; blue, 155 }  ,draw opacity=0.52 ] [dash pattern={on 4.5pt off 4.5pt}]  (472.67,5.85) .. controls (485.68,49.84) and (491.37,90.44) .. (490.88,139.08) ;
					\draw [color={rgb, 255:red, 245; green, 166; blue, 35 }  ,draw opacity=1 ] [dash pattern={on 4.5pt off 4.5pt}]  (356.36,5.85) .. controls (366.12,36.3) and (370.18,100.59) .. (369.05,138.24) ;
					\draw [color={rgb, 255:red, 35; green, 211; blue, 33 }  ,draw opacity=1 ] [dash pattern={on 4.5pt off 4.5pt}]  (337.81,124.28) -- (639.41,124.28) ;
					\draw [color={rgb, 255:red, 35; green, 211; blue, 33 }  ,draw opacity=1 ] [dash pattern={on 4.5pt off 4.5pt}]  (456.4,5.85) .. controls (471.04,34.61) and (477.55,78.6) .. (477.72,138) ;
					\draw  [color={rgb, 255:red, 208; green, 2; blue, 2 }  ,draw opacity=1 ][line width=1.5]  (417.03,104.65) .. controls (417.03,99.59) and (419.57,94.67) .. (425.44,94.67) .. controls (431.31,94.67) and (434.49,98.66) .. (434.49,104.65) .. controls (434.49,110.64) and (430.84,117.4) .. (425.6,117.4) .. controls (420.37,117.4) and (417.03,109.72) .. (417.03,104.65) -- cycle ;
					\draw [color={rgb, 255:red, 35; green, 211; blue, 33 }  ,draw opacity=1 ] [dash pattern={on 4.5pt off 4.5pt}]  (488.93,6.69) .. controls (497.88,45.61) and (505.2,87.9) .. (505.37,139.69) ;
					\draw [color={rgb, 255:red, 245; green, 166; blue, 35 }  ,draw opacity=1 ] [dash pattern={on 4.5pt off 4.5pt}]  (602.8,5) .. controls (612.56,35.45) and (616.63,99.75) .. (615.49,137.39) ;
					\draw [color={rgb, 255:red, 74; green, 144; blue, 226 }  ,draw opacity=1 ][line width=2.25]    (135,8) .. controls (166.8,55) and (148.8,127) .. (167.8,131) .. controls (186.8,135) and (263.8,133) .. (285.8,132) ;
					\draw [color={rgb, 255:red, 74; green, 144; blue, 226 }  ,draw opacity=1 ][line width=2.25]    (466,4) .. controls (491.8,50) and (501.8,111) .. (498.8,127) .. controls (495.8,143) and (410.8,121) .. (369.8,132) ;
					\draw [color={rgb, 255:red, 74; green, 144; blue, 226 }  ,draw opacity=1 ][line width=2.25]  [dash pattern={on 2.53pt off 3.02pt}]  (285.8,132) .. controls (296.8,132) and (300.8,131) .. (310.8,132) ;
					\draw [color={rgb, 255:red, 74; green, 144; blue, 226 }  ,draw opacity=1 ][line width=2.25]  [dash pattern={on 2.53pt off 3.02pt}]  (344.8,132) .. controls (354.8,134) and (361.8,134) .. (369.8,132) ;
					
					\draw (220.67,77.46) node [anchor=north west][inner sep=0.75pt]  [color={rgb, 255:red, 208; green, 13; blue, 2 }  ,opacity=1 ]  {$\overline{a}_{+}$};
					\draw (421.26,75.77) node [anchor=north west][inner sep=0.75pt]  [color={rgb, 255:red, 208; green, 2; blue, 2 }  ,opacity=1 ]  {$\overline{a}_{-}$};
					\draw (263.4,142.4) node [anchor=north west][inner sep=0.75pt]  [font=\scriptsize,color={rgb, 255:red, 245; green, 166; blue, 35 }  ,opacity=1 ]  {$\sqrt{2n} +\epsilon $};
					\draw (347.4,142.4) node [anchor=north west][inner sep=0.75pt]  [font=\scriptsize,color={rgb, 255:red, 245; green, 166; blue, 35 }  ,opacity=1 ]  {$\sqrt{2n} -\epsilon $};
					\draw (122,17.4) node [anchor=north west][inner sep=0.75pt]  [font=\large,color={rgb, 255:red, 74; green, 144; blue, 226 }  ,opacity=1 ]  {$\sigma _{k}$};
					\draw (475,-0.6) node [anchor=north west][inner sep=0.75pt]  [font=\large,color={rgb, 255:red, 74; green, 144; blue, 226 }  ,opacity=1 ]  {$\sigma _{k}$};

				\end{tikzpicture}
				\caption{A subarc $\beta$ of $\sigma_k$ passing through $\{ x = \sqrt{2n} + \epsilon\}$ (left) or $\{ x = \sqrt{2n} - \epsilon\}$ (right).}
			\end{figure}
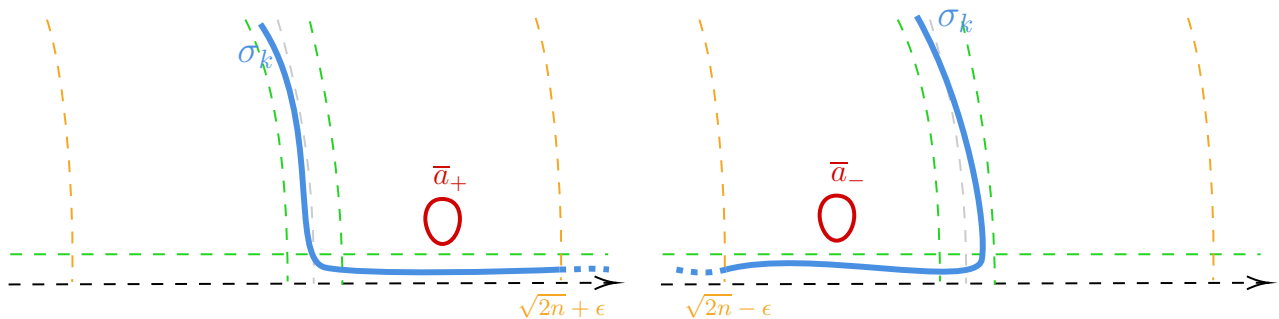

			Thus (\ref{Hausdorff converges to round sphere eqn}) is shown and this implies $d_0 (p, \Sigma_{\sigma_\infty}) < \epsilon$ for all $k \ge k_\epsilon$ and $p\in \Sigma_{\sigma_k}$. On the other hand, since $(\sigma _k|_{I_{k,R_k}})$ converges smoothly to $\sigma_\infty$ in $K_{2/\epsilon}$, by choosing a larger $k_\epsilon$ if necessary, we may assume that $d_0 (p, \Sigma_{\sigma_k}) < \epsilon/2$ for all $p=(x, r\omega)\in \Sigma_{\sigma_\infty}$ with $(x,r) \in K_{2/\epsilon}$, $k\ge k_\epsilon$ and $\omega \in \mathbb{S}^{n-1}$. Thus $d_0 (p, \Sigma_{\sigma_k}) < \epsilon$ for all $p\in \Sigma_{\sigma_\infty}$ and $k\ge k_\epsilon$. By definition of the Hausdorff distance $d^{\mathcal H}$, 
			$$ d^{\mathcal H} (\Sigma_{\sigma_\infty}, \Sigma_{\sigma_k}) = \max\{ \sup_{p\in \Sigma_{\sigma_k}} d_0 (p, \Sigma_{\sigma_\infty}), \sup_{p\in \Sigma_{\sigma_\infty}} d_0 (p, \Sigma_{\sigma_k})\} <\epsilon$$
			for all $k\ge k_\epsilon$. Since $\epsilon>0$ is arbitrary, $(\Sigma_{\sigma_k})_{k=1}^\infty$ converges in Hausdorff distance to the round sphere $ \Sigma_{\sigma_\infty}$ and this finishes the proof of Lemma \ref{claim 3}.  
		\end{proof}
		
		To summarize, we have shown that the sequence of self-shrinkers $(\Sigma_{\sigma_k})$ converges in Hausdorff distance to the sphere $\Sigma_{\sigma_\infty}$. Moreover, for all $\epsilon >0$, the set 
		$$\Sigma_{\sigma_k} \setminus (B_{(\sqrt{2n},0)}(\epsilon ) \cup  (B_{(-\sqrt{2n},0)}(\epsilon ))$$ decomposes into $N$ disjoint graphs $\Sigma^1_k,\cdots \Sigma^N_k$ over $\Sigma_{\sigma_\infty}$ for large enough $k$, where $N\ge 2$, and for each $i=1, \cdots, N$, the convergence 
		$$\Sigma^i_k \to \Sigma_{\sigma_\infty}\setminus (B_{(\sqrt{2n},0)}(\epsilon ) \cup  (B_{(-\sqrt{2n},0)}(\epsilon )), \ \ \text{ as } k\to \infty,$$
		is smooth graphical convergence.
		
		Thus we can apply \cite[Proposition 3.2]{CM2} to conclude that $\Sigma_{\sigma_\infty}$ is $L$-stable. Although \cite[Proposition 3.2]{CM2} is stated only for $n=2$, the same proof, which we now briefly describe, works for all $n\ge 2$ with only notational changes. 
		
		For each fixed $k$, since $\Sigma_k^1, \cdots, \Sigma_k^N$ are disjoint, we can order these $N$ sheets by height (with respect to the outward unit normal of $\Sigma_{\sigma_\infty}= \mathcal S$). Let the top and the bottom layers be represented respectively by two functions $w_k^+$, $w_k^-$ defined on $\Omega_k \subset \mathcal S\setminus \{(\pm \sqrt{2n},0)\}$ so that $w_k^+ > w_k^-$, $\Omega_k \subset \Omega_{k+1}$ and $\cup_k \Omega_k = \mathcal S \setminus \{(\pm \sqrt{2n},0)\}$. Fixing $x_0\in \mathcal S$, then the sequence of functions 
		$$ u_k = \frac{w^+_k - w_k^-}{w_k^+(x_0) - w_k^-(x_0)}$$
		converges locally smoothly on $\mathcal S \setminus \{(\pm \sqrt{2n},0)\}$ to a smooth function $u$ which satisfies $Lu=0$, where $L$ is the stability operator (\ref{L operator definition}). Using the Harnack inequality for linear second order elliptic equations and a maximum principle for minimal hypersurfaces (this is where Lemma \ref{claim 3} is used), one can bound $u$ uniformly. Hence $u$ extends across $\{(\pm \sqrt{2n},0)\}$ by the removable singularities lemma for $L$-harmonic functions \cite{Serrin}. Thus we have constructed a positive function on $\mathcal S$ which satisfies $Lu=0$ and this is sufficient to conclude that $\mathcal S$ is $L$-stable (see \cite{CM2} for more details and \cite{FCS} for a general statement). Since all properly embedded self-shrinkers in $\mathbb R^{n+1}$ are $L$-unstable (\cite{CM1}, see also \cite[Theorem 0.5]{CM2}), we have arrived at a contradiction and hence $\Sigma_{\sigma_\infty}$ is not the sphere $\mathcal S$. 
		
		Next we argue by contradiction that $\Sigma_{\sigma_\infty}$ is also not the plane $\mathcal P$ nor the cylinder $\mathcal C$. The arguments are similar to those for $\mathcal S$, thus we just point out the differences. 
		
		If $\Sigma_{\sigma_\infty}$ is the plane $\mathcal P$, then $\sigma_k |_{I_{k, R}}$ converges smoothly to the $r$-axis in $K_R$. Hence there is $k_1= k_1(R)\in \mathbb N$ so that $\sqrt{x^2+r^2} < R^{-1}$ for all $(r, x)\in \operatorname{Im} \sigma_k \cap K_{R}$ and $k\ge k_1$. For any $R \ge R_1$, let 
		$$K^\pm_{R} := \{ (x,r) \in K_{R} : \pm x \ge R^{-1}\}$$
		One can argue that either one of $K^\pm_{R_1} $ must intersect the image of $\sigma_k$ when $k\ge k_1$: if not, then either one of $K^\pm_{R_1} $ would lie in the compact region bounded by $\sigma_k$. This would lead to a contradiction by putting in suitably scaled and horizontally translated Angenent doughnuts in $K^\pm_{R_1}$. 
		
		Similar to the previous argument for the sphere $\mathcal S$, there is $N\ge 2$ so that for all $k\in \mathbb N$, $\operatorname{Im} \sigma_k\cap K_{R_k}$ contains $N$ connected components which intersect $K_{R_1}$, and $\operatorname{Im}\sigma_k\cap K_{R_k}$ converges smoothly graphically to the $r$-axis with multiplicity $N$. 
		
		As in the proof of Lemma \ref{claim 3}, for all $R>R_1$, one can show that $\operatorname{Im} \sigma_k \cap K_R$ converges (locally) in Hausdorff distance to $\operatorname{Im} \sigma_\infty\cap K_R$ as $k\to \infty$ (unlike the case of the sphere, there might be mass loss as $R\to \infty$). This is still sufficient for us to apply \cite[Proposition 3.2]{CM2} to conclude that the plane is $L$-stable, which is impossible \cite{CM1}.
		
		For the remaining case for the cylinder $\mathcal C$, the curves $\sigma_k |_{I_{k, R}}$ converge smoothly to $\{r = \sqrt{2(n-1)}\}$ in $K_R$. Hence there is $k_1= k_1(R)\in \mathbb N$ such that $|r- \sqrt{2(n-1)}| < R^{-1}$ for all $(x,r)\in \operatorname{Im} \sigma_k \cap K_{R}$ and $k\ge k_1$. By fitting Angenent doughnuts inside 
		\begin{align*}
			K^>_{R_1} &= \{ (x,r) \in K_{R_1} : r >\sqrt{2(n-1)} +R^{-1}\}, \\
			K^<_{R_1} &= \{ (x,r) \in K_{R_1} : r <\sqrt{2(n-1)} -R^{-1}\}
		\end{align*}
		respectively (in $K^>_{R_1}$ we insert a large Angenent doughnut), there is $N\ge 2$ so that $\operatorname{Im}\sigma_k\cap K_{R_k}$ converges smoothly graphically to $\{ r= \sqrt{2(n-1)}\}$ with multiplicity $N$. Then again we apply \cite[Proposition 3.2]{CM2}.
		
		To sum up, $\Sigma_{\sigma_\infty}$ is neither the plane $\mathcal P$ nor the cylinder $\mathcal C$ nor the sphere $\mathcal S$. By the classification Theorem \ref{partial classification of KM}, $\sigma_\infty$ is an embedded closed geodesic in $(\mathbb H, g_A)$ and the convergence $\Sigma_{\sigma_k} \to \Sigma_{\sigma_\infty}$ is smooth. This finishes the proof of Theorem \ref{smooth compactness}.
	\end{proof}
	
	\begin{rem}
		In the proof of Theorem \ref{smooth compactness}, we argued by contradiction using $L$-stability that $\Sigma_{\sigma_{\infty}}$ is neither the sphere $\mathcal S$, nor the cylinder $\mathcal C$ nor the plane $\mathcal P$. In this remark we give an alternative argument ruling out $\mathcal S$ and $\mathcal C$ by instead using the entropy bound $E_n \le E_2$ (see Lemma \ref{properties of E_n lemma}). By Lemma \ref{claim 1} and Lemma \ref{claim 2}, for each $k$ large, one can find $N$ disjoint subarcs $\sigma_k^1 \cdots, \sigma_k^N$ of $\sigma_k $ so that for each $i=1, \cdots, N$, $(\sigma_k ^i)$ converges locally smoothly to $\sigma_\infty$ as $k\to \infty$. In particular, 
		$$ \lim_{k\to \infty} L_A (\sigma_k) \ge N L_A(\sigma_\infty).$$
		By (\ref{relation between entropy and length}), we obtain 
		$$ E_2 \ge E_n \ge \lim_{k\to \infty} \lambda (\Sigma_{\sigma_k}) \ge N \lambda (\Sigma_{\sigma_\infty}). $$
		If $\Sigma_{\sigma_\infty} = \mathcal S$ or $\mathcal C$, then $\lambda (\Sigma_\infty) > \sqrt 2$ by \cite[A.4 Lemma]{Stone}. Thus $N\sqrt 2 \le E_2$, which is impossible as $N \ge 2$ and $E_2 \sim 2.2476< 2\sqrt 2$. 
		
		Since $E_n >2$ and $\lambda (\mathcal P)=1$, this argument is however not sufficient to show $\Sigma_{\sigma_\infty} \neq \mathcal P$. 
	\end{rem}

	Next we prove several consequences of Theorem \ref{smooth compactness}.
	
	First, a simple contradiction argument using Theorem \ref{smooth compactness} shows that the profile curve of every embedded rotationally symmetric self-shrinking doughnut must lie in a fixed compact subset in $\mathbb H$. Together with Theorem \ref{upper bound on length} and \cite[Theorem 1.1]{yakov2}, we obtain an upper bound on the index. (for the definition of index of a self-shrinker, see \cite[Definition 2.18]{yakov2}.) 
	
	\begin{cor} \label{cor index upper bound}
		There is $I\in \mathbb N$ so that for any complete embedded rotationally symmetric self-shrinker $\Sigma$ in $\mathbb R^3$, one has $i(\Sigma)\le I$, where $i(\Sigma)$ is the index of $\Sigma$. 
	\end{cor}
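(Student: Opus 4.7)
By Theorem \ref{partial classification of KM}, every complete embedded rotationally symmetric self-shrinker in $\mathbb R^3$ is, up to rigid motion, one of the three specific examples $\mathcal P$, $\mathcal S$, $\mathcal C$, or a doughnut $\Sigma_\sigma$. The first three have fixed finite indices, so it suffices to produce a uniform index bound on the family of doughnuts and then take the maximum.

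The first step will be to show the existence of a compact set $K \subset \mathbb H$ containing the image of the profile curve of every embedded self-shrinking doughnut in $\mathbb R^3$; equivalently, every such doughnut lies inside a fixed Euclidean ball $B_R \subset \mathbb R^3$. I argue by contradiction: if no such $K$ existed, one could extract a sequence $\Sigma_{\sigma_k}$ of doughnuts with $\sigma_k$ containing points $(x_k, r_k)$ for which $x_k^2 + r_k^2 \to \infty$ or $r_k \to 0$. Theorem \ref{smooth compactness} then furnishes a subsequence converging in $C^\infty_{\mathrm{loc}}$ to some complete embedded rotationally symmetric self-shrinker $\Sigma_\infty$. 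Inspection of the proof of Theorem \ref{smooth compactness} shows that a sequence of doughnuts cannot degenerate to $\mathcal P$, $\mathcal S$, or $\mathcal C$ (these cases are explicitly ruled out there via $L$-stability), so $\Sigma_\infty = \Sigma_{\sigma_\infty}$ for a simple closed geodesic $\sigma_\infty \subset \mathbb H$ with compact image, and $\sigma_k \to \sigma_\infty$ smoothly. But then the $\sigma_k$ are eventually trapped in any prescribed neighborhood of the compact curve $\sigma_\infty$, contradicting the escape assumption.

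With the uniform containment $\Sigma_\sigma \subset B_R$ in hand and the entropy upper bound $\lambda(\Sigma_\sigma) \le E_2$ from Theorem \ref{upper bound on length}, the second step invokes \cite[Theorem 1.1]{yakov2}, which bounds the index of a properly embedded self-shrinker from above in terms of its entropy together with the accompanying geometric data. The resulting bound is uniform across the doughnut family, and combining with the fixed indices of $\mathcal P$, $\mathcal S$, $\mathcal C$ yields the desired integer $I$.

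The main obstacle is the compact-containment step; once established, the index bound follows as an essentially off-the-shelf application of the cited theorem. The one subtlety worth double-checking is that the argument inside the proof of Theorem \ref{smooth compactness} genuinely excludes $\mathcal P$, $\mathcal S$, and $\mathcal C$ as $C^\infty_{\mathrm{loc}}$-limits of doughnuts, which it does via the $L$-stability contradiction already recorded there.
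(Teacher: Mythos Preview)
Your proposal is correct and follows essentially the same approach as the paper: a contradiction argument using Theorem \ref{smooth compactness} to confine all doughnut profile curves to a fixed compact subset of $\mathbb H$, then the entropy bound from Theorem \ref{upper bound on length}, then \cite[Theorem 1.1]{yakov2}. Your added remark that $\mathcal P$, $\mathcal S$, $\mathcal C$ have fixed finite indices is a reasonable completeness detail that the paper leaves implicit.
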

	
	Unlike our Theorem \ref{upper bound on length}, the bound $I$ in the above corollary is not explicit. 
	
	The next corollary and Theorem \ref{finitely many symmetric one} generalize \cite[Theorem 1.3]{Alex} to any dimension and by allowing rotationally symmetric self-shrinkers with non-convex profile curve and is a consequence of the fact that the Angenent metrics $g_A$ are real analytic. See also \cite{ChenMa2} for a similar theorem for Lagrangian self-shrinking tori in $\mathbb R^4$.
	
	\begin{cor} \label{finiteness of entropy value}
		For each $n\ge 2$, there is a finite set $S_n \subset [1,E_n]$ so that $\lambda (\Sigma) \in S_n$ for all complete embedded rotationally symmetric self-shrinkers $\Sigma$ in $\mathbb R^{n+1}$.
	\end{cor}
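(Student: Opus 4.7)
The plan is to argue by contradiction, combining Theorem \ref{smooth compactness} with the real-analyticity of the Angenent metric $g_A$. By the classification Theorem \ref{partial classification of KM}, every complete embedded rotationally symmetric self-shrinker in $\mathbb R^{n+1}$ is a hyperplane, sphere, cylinder, or doughnut. Since the first three types contribute only three entropy values, it suffices to prove that the set of entropies of embedded rotationally symmetric self-shrinking doughnuts is finite.

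Assume for contradiction that there is a sequence of self-shrinking doughnuts $\Sigma_{\sigma_k}$ with pairwise distinct entropies. By Theorem \ref{smooth compactness} and the last paragraph of its proof, which shows that the limit of a convergent sequence of doughnuts is again a doughnut, I may pass to a subsequence so that the profile curves $\sigma_k$ converge smoothly to a simple closed geodesic $\sigma_\infty$ in $(\mathbb H, g_A)$. By (\ref{relation between entropy and length}), the lengths $L_A(\sigma_k)$ are pairwise distinct but converge to $L_A(\sigma_\infty)$, so $L_A(\sigma_k)\neq L_A(\sigma_\infty)$ for all but at most one $k$.

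Next I would exploit that $g_A$ is real-analytic, so the geodesic flow on the unit tangent bundle of $(\mathbb H, g_A)$ is real-analytic. Fix a point $p \in \operatorname{Im}(\sigma_\infty)$, a short real-analytic arc $\tau \subset \mathbb H$ transverse to $\sigma_\infty$ at $p$, and let $v_\infty$ be the unit tangent to $\sigma_\infty$ at $p$. This yields a real-analytic Poincar\'e return map $R$, defined on a neighborhood $U$ of $(p, v_\infty)$ in the $2$-dimensional set $\tau\times S^1$, whose fixed points correspond precisely to closed geodesics meeting $\tau$ transversally and closing up at the first return. For $k$ large, $\sigma_k$ meets $\tau$ transversally at a unique point $p_k$ near $p$ with unit tangent $v_k$ near $v_\infty$, yielding a sequence of distinct fixed points $(p_k, v_k)$ of $R$ accumulating at $(p, v_\infty)$. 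Thus the real-analytic set $\operatorname{Fix}(R) \subset U$ has an accumulation point. By the local structure of real-analytic subsets of a $2$-manifold, the germ of $\operatorname{Fix}(R)$ at $(p,v_\infty)$ is a finite union of real-analytic arcs through $(p, v_\infty)$ (possibly together with the point itself), so by pigeonhole, infinitely many of the $(p_k, v_k)$ lie on a single such arc.

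The contradiction comes from the first variation of length. Parametrizing this arc gives a real-analytic $1$-parameter family $\gamma_s$ of simple closed geodesics in $(\mathbb H, g_A)$ with $\gamma_0 = \sigma_\infty$, passing through infinitely many of the $\sigma_k$. Computing $\tfrac{d}{ds} L_A(\gamma_s)$ by the standard first variation formula, the interior term vanishes since each $\gamma_s$ is a geodesic and the boundary term vanishes by closedness, so $L_A(\gamma_s) \equiv L_A(\sigma_\infty)$. This forces $L_A(\sigma_k) = L_A(\sigma_\infty)$ for infinitely many $k$, contradicting distinctness via (\ref{relation between entropy and length}). I expect the only real subtlety to be the local structure theorem used to extract the finite union of analytic arcs from $\operatorname{Fix}(R)$; the remaining ingredients—reduction via the classification, compactness, and the first variation—are standard.
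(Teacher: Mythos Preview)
Your proof is correct and reaches the same conclusion, but by a genuinely different mechanism than the paper's argument. The paper applies the \L ojasiewicz--Simon gradient inequality to the real-analytic energy functional $E_A(\gamma)=\tfrac12\int_{\mathbb S^1} g_A(\dot\gamma,\dot\gamma)$: since $DE_A$ vanishes at any geodesic, the inequality $|E_A(\gamma)-E_A(\sigma)|^{1-\theta}\le C\|DE_A(\gamma)\|_{L^2}$ forces $E_A$ (hence $L_A$) to be locally constant on the space of closed geodesics, and compactness finishes it. Your argument instead reduces to finite dimensions via the real-analytic Poincar\'e return map on a $2$-dimensional transversal in the unit tangent bundle, invokes the local structure theorem for real-analytic subsets of surfaces to extract a one-parameter family of closed geodesics, and then uses the classical first variation of length to conclude that $L_A$ is constant along that family.

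The trade-off: the paper's route is shorter and avoids the structure theorem for planar analytic sets, but imports a substantial infinite-dimensional tool (\L ojasiewicz--Simon). Your route is more elementary and self-contained in spirit---everything happens in two real variables and classical calculus of variations---at the cost of the analytic-set lemma you flagged as the one genuine input. Incidentally, the paper does use a Poincar\'e-map analysis elsewhere (in the proof of Theorem~\ref{finitely many symmetric one}, following Mramor), so your approach is very much in the spirit of the surrounding arguments; it simply transplants that finite-dimensional dynamical viewpoint to this corollary as well. One small remark: you don't actually need the $\gamma_s$ to be \emph{simple} for the first-variation step, so that adjective can be dropped without harm.
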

	
	\begin{proof}
		By Theorem \ref{partial classification of KM}, it suffices to consider only rotationally symmetric self-shrinkers that are diffeomorphic to $\mathbb S^{n-1} \times\mathbb S^1$. Let $\sigma$ be the profile curve of a rotationally symmetric self-shrinking doughnut in $\mathbb R^{n+1}$. Thus $\sigma$ is a geodesic in $(\mathbb H, g_A)$. Since $g_A$ is real analytic, the mapping 
		$$\gamma \mapsto E_A(\gamma):= \frac 12\int_{\mathbb S^1} g_A(\dot\gamma, \dot\gamma)$$
		is a real analytic functional defined on all closed curves $\gamma$ close to $\sigma$. Note that $DE_A(\gamma)=0$ if and only if $\gamma$ is a geodesic, here $DE_A(\gamma)$ is the $L^2$-gradient of $E_A$ at $\gamma$. By the celebrated \L ojasiewicz-Simon gradient inequality \cite[equation (2.2)]{Leonsimon}, there is $C_2 >0$ and $\theta \in (0,1/2)$ so that 
		$$| E_A(\gamma) - E_A(\sigma)|^{1-\theta} \le C_2 \| DE_A(\gamma)\|_{L^2},$$
		for all $\gamma$ which are $C^{2,\alpha}$-close to $\sigma$. Hence if $\gamma$ is another geodesic close to $\sigma$, we have $E_A(\gamma) = E_A(\sigma)$. Since geodesics are parametrized by constant length, one has $L_A(\gamma) = L_A(\sigma)$. Thus the length functional $\gamma \mapsto L_A(\gamma)$ is locally constant on the space of all closed geodesics. Since the space of simple closed geodesics is compact by Theorem \ref{smooth compactness}, the length functional has a finite image. Together with (\ref{relation between entropy and length}), this finishes the proof of the corollary. 
	\end{proof}
	
	Lastly, we prove Theorem \ref{finitely many symmetric one}.
	
	\begin{proof}[Proof of Theorem \ref{finitely many symmetric one}] In \cite[section 5]{Alex}, Mramor studies the Poincar\'e map of the (up to renormalization of length) geodesic equation in $(\mathbb H, g_A)$
		\begin{align} \label{geodesic equ of H, g}
			x'=\cos \theta, \ \ r' = \sin \theta, \ \ \theta' = \frac{x}{2} \sin \theta + \left( \frac{n-1}{r} - \frac{r}{2}\right) \cos \theta.
		\end{align}
		For any $R >0$ and $\theta$, let $(x_{R,\theta}(t), y_{R, \theta} (t), \theta_{R, \theta} (t))$ be the maximal solution to (\ref{geodesic equ of H, g}) with initial value $(0, R, \theta)$. Let $T^* = T^* (R, \theta)$ be the second time at which $x_{R, \theta} = 0$ occurs. 
		
		By \cite[Lemma 5.1]{Alex}, the fixed points of the Poincar\'e map 
		$$ P : \mathbb R \times (0,\infty) \to \mathbb R \times (0,\infty), \ \ P(R, \theta) = (r_{R, \theta}(T^*), \theta_{R, \theta}(T^*))$$
		are either isolated points or analytic curves in $\mathbb R \times (0,\infty)$. Arguing as in the proof of \cite[Theorem 1.3]{Alex}, the map 
		$$P_f : (0,\infty) \to (0,\infty),\ \ \  P_f (R) = r_{R,0} (T^*(R,0), 0)$$
		has isolated fixed points. Since the profile curve of any embedded self-shrinker with reflectional symmetry must intersect the fixed points of $P_f$, together with Theorem \ref{smooth compactness}, Theorem \ref{finitely many symmetric one} has been proven. 
	\end{proof}
	
	\section{Appendix: the sequence $(E_n)$}
	In this appendix, we show the following lemma. 
	
	\begin{lem} \label{properties of E_n lemma} 
		The sequence $(E_n)$ defined in (\ref{E_n constants}) satisfies $2<E_n \le E_2$ and
		\begin{align} \label{limit of E_n}
			\lim_{n\to \infty} E_n= \sqrt{\frac{4\pi}{3}}. 
		\end{align}
	\end{lem}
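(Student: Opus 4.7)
The plan is to reduce everything to a single closed-form expression and then carry out asymptotic analysis. Using $\omega_{n-1}=2\pi^{n/2}/\Gamma(n/2)$, $(4\pi)^{n/2}=2^{n}\pi^{n/2}$, and $\sqrt{\kappa_n}=y_n^{-n/2}\sqrt{y_n+n-1}\,e^{y_n/4}$, the definition (\ref{E_n constants}) collapses to
\begin{align*}
E_n \;=\; \frac{4\pi\, y_n^{n/2}\, e^{-y_n/4}}{2^{n}\,\Gamma(n/2)\,\sqrt{y_n+n-1}}.
\end{align*}
From the defining quadratic $y_n^2-(n-1)y_n-2(n-1)=0$ (equivalently, the minimum condition for $K$), a direct expansion gives $y_n = 2(n-1)+\tfrac{2}{3}+O((n-1)^{-1})$ as $n\to\infty$.

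To establish (\ref{limit of E_n}) I would plug the expansion of $y_n$ into the closed form and apply Stirling's formula $\Gamma(n/2)\sim\sqrt{4\pi/n}\,(n/(2e))^{n/2}$. The expanding factors $y_n^{n/2}e^{-y_n/4}$ and $2^n\Gamma(n/2)$ both behave like $(2n/e)^{n/2}$ up to constants that cancel exactly: the contribution $e^{1/6}$ from $(1+1/(3m))^{(m+1)/2}$ is killed by the contribution $e^{-1/6}$ from $e^{-y_n/4}$. The only surviving polynomial terms are $\sqrt{y_n+n-1}\sim\sqrt{3n}$ in the denominator and the Stirling prefactor $\sqrt{4\pi/n}$, which together give
\begin{align*}
E_n \;\longrightarrow\; \frac{4\pi}{\sqrt{4\pi/n}\,\sqrt{3n}} \;=\; \frac{4\pi}{\sqrt{12\pi}} \;=\; \sqrt{\tfrac{4\pi}{3}},
\end{align*}
as claimed.

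For the upper bound $E_n\le E_2$, I would interpolate $n$ to a continuous variable $\nu\ge 2$ (replacing $n-1$ by $\nu-1$ in the formula for $y_n$) and show that the logarithmic derivative $\tfrac{d}{d\nu}\log E_\nu$ is nonpositive; since $\log\Gamma$ and $y_\nu$ are smooth functions of $\nu$, this reduces to a one-variable monotonicity inequality that can be verified by combining the digamma identity $\psi(\nu/2)$ with the explicit derivative $y_\nu'$ computed from the quadratic. For the strict lower bound $E_n>2$, I would argue in two steps: first verify the bound by direct computation for a finite initial range $2\le n\le N_0$, and then for $n\ge N_0$ use quantitative Stirling remainders together with an explicit error bound on $y_n=2(n-1)+\tfrac{2}{3}+O((n-1)^{-1})$ to show $E_n$ stays above $2$ uniformly.

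The main obstacle is the lower bound: since $\lim E_n=\sqrt{4\pi/3}\approx 2.047$ leaves a gap of only about $0.05$ above the threshold $2$, the asymptotic estimates must be made effective rather than merely leading-order, and the monotonicity-type argument used for the upper bound does not immediately transfer. Thus careful quantitative control of both the Stirling remainder and the $O((n-1)^{-1})$ term in $y_n$, combined with explicit numerics for small $n$, will be needed to close the gap uniformly.
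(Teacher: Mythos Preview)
Your computation of the limit via direct Stirling asymptotics is correct and parallels the paper's argument (the paper packages the Stirling factor as $\lambda(\mathbb S^{n-1})\to\sqrt 2$ via Stone's formula $\lambda(\mathbb S^m)=(m/2\pi e)^{m/2}\omega_m$, but the content is the same). One minor slip: the defining quadratic for $y_n$ is $y_n^2-(n-1)y_n-2n(n-1)=0$, not $-2(n-1)$; your expansion $y_n=2(n-1)+\tfrac23+O((n-1)^{-1})$ is nonetheless the right one, so this does not affect the rest.

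For the inequalities $2<E_n\le E_2$ your route and the paper's genuinely diverge. You propose to prove $E_n\le E_2$ by showing $\tfrac{d}{d\nu}\log E_\nu\le 0$ via the digamma function; this is plausible but you have not carried it out, and the resulting expression mixes $\psi(\nu/2)$, $y_\nu'$, and several logarithms whose sign is not obvious a priori. For the lower bound you correctly identify the difficulty (the margin above $2$ is only about $0.05$) and propose effective Stirling plus finite numerics, again without details. The paper instead rewrites
\[
E_n \;=\; \sqrt{\tfrac{2\pi}{3}\,\tfrac{1+x_n}{1+2x_n/3}}\;\Bigl(\tfrac{1}{e}(1+x_n)^{1/x_n}\Bigr)^{a_n/4}\,\lambda(\mathbb S^{n-1}),
\]
with $a_n=y_n-2(n-1)\in(\tfrac12,\tfrac23)$ and $x_n=a_n/(2(n-1))\in(0,1)$, and uses only the elementary Taylor bounds $x-\tfrac{x^2}{2}<\log(1+x)<x+\tfrac{x^3}{3}$ together with the known monotonicity of $\lambda(\mathbb S^{n-1})$ from Stone to sandwich $E_n$ between an explicit decreasing upper envelope and an explicit increasing lower envelope. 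Evaluating both envelopes at $n=4$ already yields $2.028<E_n<2.219$ for all $n\ge 4$, and $n=2,3$ are checked directly. This settles both inequalities with a single numerical evaluation and sidesteps both the digamma computation and the effective-Stirling bookkeeping you flag as the main obstacle. What your approach would buy, if the monotonicity of $E_\nu$ can actually be established, is the sharper statement that $(E_n)$ itself is decreasing, which the paper does not claim.
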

	\begin{proof} 
		It is proved in \cite[A.4 Lemma]{Stone} that the entropy of the $n$-sphere $\lambda (\mathbb{S}^n)$ satisfies 
		\begin{align} \label{entropy of sphere formula}
			\lambda (\mathbb{S}^n) = \left( \frac{n}{2\pi e}\right)^{n/2} \omega_n
		\end{align}
		and the sequence $(\lambda (\mathbb{S}^n) )$ is strictly decreasing. Also,  
		\begin{align} \label{limit of entropy of sphere}
			\lim_{n\to \infty} \lambda (\mathbb{S}^n) = \sqrt 2.
		\end{align}
		From (\ref{E_n constants}) and (\ref{entropy of sphere formula}) we obtain
		\begin{align} \label{formula for E_n in term of entropy of spheres}
			E_n = \sqrt{\frac{2\pi}{3} \frac{1 +x_n}{1 + 2x_n/3}} \left(\frac{1}{e} \left( 1 + x_n \right)^{1/x_n}\right)^{a_n /4} \lambda (\mathbb{S}^{n-1}),
		\end{align}
		where 
		\begin{align*}
			a_n = y_n - 2(n-1), \ \ x_n = \frac{a_n}{2(n-1)}. 
		\end{align*}
		Direct calculations give 
		\begin{align} \label{inequality for a_n and x_n} 
			\frac 12  < a_n <\frac 23 , \ \ 0<x_n <1
		\end{align}
		and 
		\begin{align} \label{limit of a_n and x_n}
			\lim_{n\to \infty} a_n = \frac{2}{3}, \ \ \ \lim_{n\to \infty} x_n = 0.
		\end{align}
		Using (\ref{formula for E_n in term of entropy of spheres}), (\ref{limit of entropy of sphere}) and (\ref{limit of a_n and x_n}), one obtains (\ref{limit of E_n}). 
		
		Next we show $2< E_n \le E_2$. By the Taylor expansion of $\ln (1+x)$, we have $ x + x^3/3 >\ln (1+x) > x - x^2/2$ for all $x\in (0,1)$. Thus
		$$e^{\frac{x^2}{3}} > \frac{1}{e} (1+x)^{1/x} > e^{-\frac{x}{2}}, \ \ \text{ for all } x\in (0,1).$$
		Together with $\lambda (\mathbb{S}^{n-1}) > \sqrt 2$, (\ref{inequality for a_n and x_n}) and (\ref{formula for E_n in term of entropy of spheres}),
		\begin{align} \label{last inequalit of E_n}
			\frac{\sqrt{2\pi(3+(n-1)^{-1})}}{3} e^{\frac{1}{162 (n-1)^2}} \lambda (\mathbb{S}^{n-1}) >  E_n >  \sqrt{\frac{4\pi}{3}}e^{-\frac{1}{36(n-1)}}. 
		\end{align}
		Since $\lambda (\mathbb{S}^{n-1})$ is decreasing, the upper bound in (\ref{last inequalit of E_n}) is strictly decreasing in $n$. Also, the lower bound in (\ref{last inequalit of E_n}) is strictly increasing in $n$. Plugging in $n=4$ in the upper and lower bound of (\ref{last inequalit of E_n}) gives 
		$$2.21823\sim \frac{\sqrt{10} \pi}{e^{1093/729}} > E_n > \sqrt{\frac{4\pi}{3}}e^{-\frac{1}{108}} \sim 2.02780, \ \ \text{ for all } n\ge 4.$$
		The inequality implies $2 <E_n < E_2$ for all $n\ge 4$. The case $n=2, 3$ can be checked directly. 
		
	\end{proof}

	\bibliographystyle{amsplain}

	\newpage
	
	\noindent John Man Shun Ma \\
	Department of Mathematical Science, University of Copenhagen \\
	Universitetsparken 5, DK-2100 Copenhagen \O, Denmark\\
	email: jm@math.ku.dk
	
	\bigskip
	
	\noindent Ali Muhammad \\
	Department of Mathematical Science, University of Copenhagen \\
	Universitetsparken 5, DK-2100 Copenhagen \O, Denmark\\
	email: ajhm@math.ku.dk
	
	\bigskip
	
	\noindent Niels Martin M\o ller \\
	Department of Mathematical Science, University of Copenhagen \\
	Universitetsparken 5, DK-2100 Copenhagen \O, Denmark\\
	email: nmoller@math.ku.dk
	
\end{document}